\theoremstyle{plain}
\newtheorem{lem}{Lemma}[section]
\newtheorem{thm}{Theorem}[section]
\newtheorem{cor}{Corollary}
\newtheorem{prop}{Proposition}[section]
\theoremstyle{definition}
\newtheorem{exa}{Example}
\newtheorem{dfn}{Definition}[section]
\numberwithin{equation}{section}
\newcommand{\bpf}{\begin{proof}}
\newcommand{\epf}{\end{proof}}
\begin{document}
\author{Miodrag Mateljev\textrm{i\'{c}}}
\address{Faculty of mathematics, Univ. of Belgrade, Studentski Trg 16, Belgrade, Serbia}
\email{\rm miodrag@matf.bg.ac.rs}
\title[Bounds for the modulus of the derivatives of  harmonic mappings]{The
lower bound for the modulus of the derivatives and Jacobian of
harmonic injective  mappings}
\maketitle

\thanks{Research partially supported by MNTRS, Serbia,  Grant No.
174 032}

This is a very rough version.

\bigskip
\bigskip

\begin{abstract}
We give  the lower bound for the modulus of the radial derivatives
and Jacobian of harmonic injective mappings from the unit ball
onto convex domain  in plane and space.  As an application we show
co-Lipschitz property of some classes of qch mappings. We also
review related results in planar case using some novelty.
\end {abstract}

Throughout the paper we denote by $\Omega $, $G$ and $D$ open subset of $%
\mathbb{R}^{n}$, $n\geq 1$.

Let $B^{n}(x,\ r)=\{z\in \mathbb{R}^{n}:|z-x|<r\},\ S^{n-1}(x,\
r)=\partial B^{n}(x,\ r)$ (abbreviated $S(x,\ r)$) and let
$\mathbb{B}^{n},\ \mathbb{S}=\mathbb{S}^{n-1}$ stand for the unit
ball and the unit sphere in $\mathbb{R}^{n}$, respectively. In
particular,  by $\,\mathbb{D}\,$ we denote the unit disc
$\mathbb{B}^2$  and $\,\mathbb{T}=\partial \mathbb{D}\,$ we denote
the unit circle $\mathbb{S}^1$  in the complex plane.

For a
domain $D$ in $\mathbb{R}^{n}$ with non-empty boundary, we define
the
distance function $d=d_D=\mathrm{dist}(D)$ by $d(x)=d(x;\partial D)=\mathrm{%
dist}(D)(x)=\inf \{|x-y|:y\in \partial D\}$; and if $f$ maps $D$ onto $%
D^{\prime }\subset \mathbb{R}^{n}$, in some settings it is
convenient to use short notation $d^{\ast}=d^{\ast}(x)=d_{f}(x)$
for $d(f(x);\partial D^{\prime })$. It is clear that
$d(x)=\mathrm{dist}(x,\ D^{c})$, where $D^{c}$ is the complement
of $D$ in $\mathbb{R}^{n}$.
Let $G$ be an open set in ${\mathbb{R}}^{n}$. A mapping $f:G\rightarrow {%
\mathbb{R}}^{m}$ is differentiable at $x\in G$ if there is a linear mapping $%
f^{\prime }(x):{\mathbb{R}}^{n}\rightarrow {\mathbb{R}}^{n}$,
called the derivative of $f$ at $x$, such that
\begin{equation*}
f(x+h)-f(x)=f^{\prime }(x)h+|h|\varepsilon (x,h)
\end{equation*}%
where $\varepsilon (x,\ h)\rightarrow 0$ as $h\rightarrow 0$.
For a vector-valued function $f:G\rightarrow {\mathbb{R}}^{n}$, where $%
G\subset {\mathbb{R}}^{n}$, is a domain, we define
\begin{equation*}
|f^{\prime }(x)|=\max\limits_{|h|=1}|f^{\prime }(x)h|\,\quad
\text{and}\quad l(f^{\prime }(x))=\min\limits_{|h|=1}|f^{\prime
}(x)h|\,,
\end{equation*}%
when $f$ is differentiable at $x\in G\,.\ $   Occasionally  we use
the notation  $\Lambda_f(x)$  and  $\lambda_f(x)$  instead of
$|f^{\prime }(x)|$  and   $\ell(f^{\prime }(x)$ (in particular in
planar case) respectively.

For $x \in R^n$, we   use notation $r=|x|$.  We say  that Jacobian
$J$ of mapping on a domain $\Omega$   satisfies minimum principle
if for every compact $F \subset \Omega$
 we have  $\inf_ F J\geq inf _{\partial F} J$.
 A   $C^1$ (in particular diffemorphisam)  mapping $f: \Omega\rightarrow \Omega^*$ is  $K$-qc iff

\begin{equation}
|f'(x)|^n/K \leq |J(x,f)|\leq K \, \ell((f'(x))^n
\end{equation}
holds for every $x\in \Omega$.   For   $\xi \in \mathbb{S}$,
define
$$ h_b(\xi) = h^*(\xi)=\lim_{r\rightarrow 1}
h(r \xi)$$ when this limit exists.

The directional derivative of a scalar function  $f(\bold{x}) =
f(x_1, x_2, \ldots, x_n)$   along a vector $\bold{v} = (v_1,
\ldots, v_n)$ is the function defined by the limit

   $$ \nabla_{\bold{v}}{f}(\bold{x}) = \lim_{h \rightarrow 0}{\frac{f(\bold{x} + h\bold{v}) - f(\bold{x})}{h}}.$$

If the function $f$ is differentiable at $x$, then the directional
derivative exists along any vector $v$, and one has

    $$\nabla_{\bold{v}}{f}(\bold{x}) = \nabla f(\bold{x}) \cdot \bold{v}\,,$$
where the $\nabla$ on the right denotes the gradient and $\cdot$
is the dot product. Intuitively, the directional derivative of f
at a point $x$ represents the rate of change of $f$ with respect
to time when it is moving at a speed and direction given by $v$.
Instead of  $\nabla_{\bold{v}}{f}$ we also write
$D_{\bold{v}}{f}$.   If   $v= \frac{x}{|x|}$, $x\neq 0$, $g(t)=
f(x +t v)$  and $D_{\bold{v}}{f}(x)$ exists, we define $\partial_r
f(x)= D_{\bold{v}}{f}(x)=g'(0)$.

Let $\Omega \in \mathbb{R}^{n}$ and $\mathbb{R}^{+}=[0,\ \infty )$ and $f,\
g:\Omega \rightarrow \mathbb{R}^{+}$. If there is a positive constant $c$
such that $f(x)\leq c\,g(x)\,,\ x\in \Omega \,$, we write $f\preceq g$ on $%
\Omega $. If there is a positive constant $c$ such that

\begin{equation*}
\frac{1}{c}\,g(x)\leq f(x)\leq c\,g(x)\,,\quad x\in \Omega \,,
\end{equation*}%
we write $f\approx g$ (or $f\asymp g$ ) on $\Omega $.

O.Martio \cite{OM1} observed that, every quasiconformal harmonic
mapping of the unit disk onto itself is co-Lipschitz. Then the
subject was intensively studied by the participants of Belgrade
Analysis Seminar, see for example  \cite{mm.fil12a,k.jdam,aaa,MP}
and the literature cited there. In particular  Kalaj and
Mateljevi\'c, shortly KM-approach, study lower bound of  Jacobian.
The corresponding results for harmonic maps between surfaces were
obtaind previously by  Jost and Jost-Karcher  \cite{jost2,jost}.
We refer to this results shortly as JK- result (approach).
Recently  Iwaniec has communicated the proof of
Rado-Kneser-Choquet  theorem (shortly Theorem RKC), cf.
\cite{Iw14}, Iwaniec- Onninen  cf. \cite{IwOn}. We refer to this
communication shortly as IwOn-approach.
 It seems that there is some overlap between  KM- results  with  \cite{Iw14,IwOn} and
\cite{jost2,jost} (we will shortly describe it in Section
\ref{sec.pl}). Note only here that in planar case  JK- result  is
reduced to Theorem RKC.

The author has begun to consider  harmonic functions  in the space
roughly since 2006 trying to  generalize  theory in the plane, cf
\cite{mm.unpub2,akm,mavu,KaMpacific,aams,aaa}.\\ He realized some
differences  between theory in the  plane and  space  and  some
difficulties to develop  the space theory.    It was observed that
gradient mappings of harmonic functions are good candidate for
generalization of the planar theory  to space  outlining   some
ideas and asking  several open problems on the Belgrade Analysis
seminar. Having studied   Iwaniec's lecture \cite{Iw14} recently
the author has found  an additional motivation to investigate  in
this direction. For the present state  see also the recent  arXiv
papers of
Astala-Manojlovi\' c \cite{ast.ma},  Bo\v zin-Mateljevi\'c
\cite{BoMa2} and Mateljevi\'c \cite{rckm0}.

Suppose that $\,F\,$ is
mapping from  a domain $G\subset \mathbb{R}^n $ (in particular,
from the unit ball $\mathbb{B}\subset \mathbb{R}^n$) onto a
bounded convex  domain $D=F(G)$.
To every  $a\in
\partial D$  we associate a nonnegative
function $u=\underline{u}_a=\underline{F}^a$.  Since  $D$ is
convex, for $a\in
\partial D$, there is a  supporting hyper-plane (a subspace of dimension $n - 1$) $\Lambda_a$ defined by $\Lambda_a=\{w \in \mathbb{R}^n: (w-a,n_a)=0 \}$, where
$n=n_a\in T_a\mathbb{R} ^n$  is  a  unit vector
such that $ (w-a,n_a)\geq 0$ for every $w\in \overline{D}$. Define $u(z)=\underline{F}^a(z)=(F(z)-a,n_a)$, cf.
\cite{revroum01,napoc1,mm.fil12a,kalaj.thesis}. Our approuch here is also
based on  function $\underline{F}^a$.


We  provide explicit interior
lower bounds on the Jacobian in terms of the regularity of the
domains and the boundary map  in  Section \ref{sec.pl}. For the convenience of the  reader in  Section   \ref{app} we mainly collect  some results
which we used in Section \ref{sec.pl}  and give a few additional results.

In  Section   \ref{app2},  we outline short review of results from
\cite{jost2}  for  Harmonic Maps Between surfaces concerning lower
bounds on the Jacobian,  and  the existence of harmonic
diffeomorphisms which solve a Dirichlet problem.

In Section  \ref{s.space},  estimates for the modulus of the
derivatives of harmonic univalent mappings in space  are given.

In Section \ref{grad3}  we  generalize and develop the arguments
used in planar theory of harmonic mappings to gradient mappings of
harmonic functions in domains of $\mathbb{R}^3$. For example,  we
can consider the proof  of Theorem  \ref{3clip} (which is not
based on an approximation  argument) as a suitable generalization
of the proof of Theorem \ref{1clip}.

\section{Estimates for the modulus of the derivatives of
harmonic univalent planar mappings from below}\label{sec.pl}
\subsection{}For univalent harmonic maps between surfaces, estimates of
Jacobian from below in terms of the geometric data involved  are
given in  Jost \cite{jost2} (see  Corollary 8.1, Theorem 8.1) and
for univalent euclidean harmonic maps  in
\cite{kalaj.thesis,kal.studia,mm.fil12a,aaa}.
In this subsection, we consider convex  codomains  and give short
review of a few result from \cite{napoc1,revroum01}. It  seems
that Theorem \ref{1clip} is a new result. For a function $h$, we
use notation $\partial h=\frac{1}{2}(h_{x}^{\prime
}-ih_{y}^{\prime })$ and $\overline{\partial
}h=\frac{1}{2}(h_{x}^{\prime }+ih_{y}^{\prime })$; we also use
notations $Dh$ and $\overline{D}h$ instead of $\partial h$ and
$\overline{\partial }h$ respectively when it seems convenient.

Throughout this paper, if $h$  is a complex  harmonic function on
simple connected planar domain, we will write $h$ in the form
$h=f+\overline{g}$, where $f$ and $g$ are    holomorphic. Note
that   every complex valued harmonic function $h$ on simply
connected domain  $D$ is of this form.

Recall by $\,\mathbb{D}\,$ we denote the unit disc and
$\,\mathbb{T}=\partial \mathbb{D}\,$ we denote the unit circle, and we use
notation $z=re^{i\theta }$. For a function $h$ we denote by $h_{r}^{\prime
},\ h_{x}^{\prime }$ and $h_{y}^{\prime }$ (or sometimes by $\partial _{r}h$%
, $\partial _{x}h$ and $\partial _{x}h$) partial derivatives with respect to
$r$, $x$ and $y$ respectively. Let $h=f+\overline{g}$ be harmonic, where $f$
and $g$ are analytic. Then $\,\partial h=f^{\prime }$, $%
h_{r}^{\prime }=f_{r}^{\prime }+\overline{g_{r}^{\prime }}$, $f_{r}^{\prime
}=f^{\prime }(z)e^{i\theta }$ and $J_{h}=|f^{\prime }|^{2}-|g^{\prime }|^{2}$%
. If $h$ is univalent, then $|g^{\prime }|<|f^{\prime }|$ and therefore $%
|h_{r}^{\prime }|\leq |f_{r}^{\prime }|+|g_{r}^{\prime }|$ and $%
|h_{r}^{\prime }|<2|f^{\prime }|$.

\begin{thm}[\cite{napoc1}]\label{t1}
Suppose  that\\
{\rm (a)}    $\,h\,$ is    a euclidean harmonic mapping from
$\mathbb{D}$ onto  a  bounded convex  domain  $D=h(\mathbb{D})$,
which
contains the  disc $\,B(h(0);R_0)\,$.  Then \\
$(i.1)$  $d(h(z),\partial D) \geq (1-|z|)R_0/2 $, \, $z\in \mathbb{D}$.\\
$(i.2)$  Suppose  that $ \omega=h^*(e^{i\theta})$  and
$h^*_r=h'_r(e^{i\theta})$ exist at a point $ e^{i\theta}\in
\mathbb{T}$,  and  there exists the unit inner normal $n=n_{
\omega}$ at
$ \omega=h^*(e^{i\theta})$ with respect to $\partial D$.\\
Then  $(h^*_r, n)\geq c_0$,
where    $c_0=\frac{R_0}{2}$.\\
$(i.3)$ In addition to  the hypothesis  stated in the item $i.2)$,
suppose  that  $h_b'$   exists at the  point $e^{i\theta}$.   Then
$ |J_h| = |(h_r^*, N)|= \big|(h_r^*, n)\big||N|\geq c_0 |N|$,
where $N=i\,h_b'$   and  the Jacobian is computed at the  point
$e^{i\theta}$ with respect to the polar coordinates.

$(i.4)$    If in addition to the hypothesis  {\rm (a)}   suppose that  $\,h\,$ is an euclidean univalent harmonic
mapping from an open set $G$ which contains $\,\overline {\mathbb{D}}\,$. Then    $|f^{\prime }|\geq \frac{R}{4}$ on $\,\mathbb{D}$.
\end{thm}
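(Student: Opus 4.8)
The plan is to derive the four items in sequence, each feeding into the next. For $(i.1)$, I would fix $z\in\mathbb D$ and apply the Schwarz-type / Harnack argument to the positive harmonic function $u=\underline{h}^a$ attached to a nearest boundary point $a=a(z)$ realizing $d(h(z),\partial D)$: since $D\supset B(h(0);R_0)$ we have $u(0)=(h(0)-a,n_a)\ge R_0$, while $u$ is a positive harmonic function on $\mathbb D$, so the Harnack inequality gives $u(z)\ge \frac{1-|z|}{1+|z|}u(0)\ge \frac{1-|z|}{2}R_0$; and $d(h(z),\partial D)\ge u(z)$ because $u$ is the distance to the supporting hyperplane $\Lambda_a$, which lies outside $D$. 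This is the cleanest step and essentially forces the constant $c_0=R_0/2$ that appears later.

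For $(i.2)$, I would pass to the radial limit. With $\omega=h^*(e^{i\theta})$ and inner normal $n=n_\omega$, consider $u=\underline h^\omega$; it is positive harmonic on $\mathbb D$ with $u(e^{i\theta})=0$ (boundary value) and, by $(i.1)$ applied along the radius, $u(re^{i\theta})\ge \frac{1-r}{2}R_0$. Hence the inward radial derivative satisfies
\begin{equation*}
(h^*_r,n)=\partial_r u(e^{i\theta})=\lim_{r\to 1}\frac{u(e^{i\theta})-u(re^{i\theta})}{1-r}\ge \frac{R_0}{2}=c_0,
\end{equation*}
using that $u$ decreases to $0$ as $r\to 1$ at least as fast as $\frac{1-r}{2}R_0$. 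Here one must be slightly careful that $\partial_r u$ exists and equals $(h^*_r,n_\omega)$; this is where the hypothesis that $h^*_r$ exists at $e^{i\theta}$ is used, together with linearity of $w\mapsto(w-\omega,n_\omega)$.

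For $(i.3)$, the point is that in polar coordinates the Jacobian of $h$ at $e^{i\theta}$ is (up to the standard factor) the cross product of the radial derivative $h^*_r$ and the angular derivative $\partial_\theta h^*$; since $h$ maps $\mathbb T$ into $\partial D$, the angular derivative is tangential, i.e.\ $\partial_\theta h^* = h_b'\cdot i e^{i\theta}$ points along $\partial D$ and is orthogonal to $n_\omega$. Writing $N=i h_b'$ (a normal-length vector), one gets $|J_h|=|(h^*_r,N)|=|(h^*_r,n)|\,|N|$ because the tangential component of $h^*_r$ contributes nothing to the determinant; then $(i.2)$ gives $|J_h|\ge c_0|N|$. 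The main thing to check is the identification of the polar Jacobian with $(h^*_r,N)$ and that $\partial_\theta h^*\perp n_\omega$, which is just differentiating the relation $(h^*(e^{i\theta})-\omega,n_\omega)=0$ — no, more carefully, differentiating $\underline h^\omega(e^{it})\ge 0$ with equality at $t=\theta$, so its $t$-derivative vanishes there.

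For $(i.4)$, now assume $h=f+\bar g$ is univalent harmonic on an open set $G\supset\overline{\mathbb D}$, so $h_b$ and $h_b'$ extend smoothly and $J_h=|f'|^2-|g'|^2>0$. Apply $(i.3)$: for a.e.\ (indeed every, by the regularity) $e^{i\theta}\in\mathbb T$ we have $|J_h(e^{i\theta})|\ge c_0|N|=c_0|h_b'(e^{i\theta})|$. On $\mathbb T$, $|h_r'|=|f'e^{i\theta}+\overline{g'e^{i\theta}}|$ and $|h_\theta'|=|h_b'|$; combining $J_h=|f'|^2-|g'|^2$ with $|J_h|\ge c_0|h_b'|$ and the elementary inequality $|h_b'|=|ie^{i\theta}f'-\overline{ie^{i\theta}g'}|\le |f'|+|g'|$, one gets $|f'|^2-|g'|^2\ge c_0(|f'|-|g'|)$ is the wrong direction; instead I would argue that $(|f'|-|g'|)(|f'|+|g'|)=J_h\ge c_0|h_\theta'|$ and separately bound $|h_\theta'|$ from below by a multiple of $|f'|+|g'|$ on $\mathbb T$ using that $h$ is a diffeomorphism up to the boundary — so that $|f'|-|g'|\gtrsim c_0$ on $\mathbb T$, hence $|f'|\ge |f'|-|g'|\ge$ const on $\mathbb T$. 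Finally, $1/|f'|$ is subharmonic (as $|f'|\ne 0$ makes $\log|f'|$ harmonic, so $|f'|^{-1}=e^{-\log|f'|}$ is subharmonic? — actually $\log|f'|$ harmonic makes $|f'|^{-1}$ logarithmically harmonic; what I really want is that $1/|f'|$ satisfies the maximum principle), so $\min_{\overline{\mathbb D}}|f'|=\min_{\mathbb T}|f'|\ge$ const, and tracking the constant gives $R/4$.

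\medskip

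The hard part will be $(i.4)$: getting from the boundary Jacobian bound $|J_h|\ge c_0|h_b'|$ to the uniform lower bound $|f'|\ge R/4$ on all of $\mathbb D$ requires (a) converting a bound on $J_h=(|f'|-|g'|)(|f'|+|g'|)$ into a bound on $|f'|$ alone, which needs control of $|h_\theta'|=|h_b'|$ from below on $\mathbb T$ (this uses univalence up to the boundary and is the most delicate estimate), and (b) propagating the boundary bound into the interior via a maximum-principle / subharmonicity property of $1/|f'|$ (equivalently, that $\log|f'|$ is harmonic and hence $|f'|^{-1}$ obeys the minimum principle for $|f'|$). The factor $R/4$ versus the boundary constant $c_0=R/2$ (writing $R$ for $R_0$) presumably reflects exactly one application of such a comparison. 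I expect $(i.1)$–$(i.3)$ to be short once the function $\underline h^a$ and Harnack's inequality are in hand.
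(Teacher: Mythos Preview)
Your treatment of $(i.1)$--$(i.3)$ matches the paper's approach: associate to each boundary point $a$ the nonnegative harmonic function $u=\underline h^a$, use Harnack (equivalently the lower Poisson bound $\frac{1-r}{1+r}$) to get $(i.1)$, and then differentiate radially at the boundary to get $(i.2)$ and $(i.3)$. That part is fine.

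Your argument for $(i.4)$ has a genuine gap. You try to go through $(i.3)$, obtaining $J_h\ge c_0|h'_\theta|$ on $\mathbb T$, and then you need $|h'_\theta|\ge c\,(|f'|+|g'|)$ in order to cancel the factor $|f'|+|g'|$ from $J_h=(|f'|-|g'|)(|f'|+|g'|)$. But that lower bound is false in general: since $h'_\theta=ie^{i\theta}f'-ie^{-i\theta}\overline{g'}$ (at $r=1$), one only has $||f'|-|g'||\le |h'_\theta|\le |f'|+|g'|$, and $|h'_\theta|$ can be as small as $|f'|-|g'|$. Univalence up to the boundary does not prevent this; it only tells you $|g'|<|f'|$, not how the phases align. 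So the step you flag as ``the most delicate estimate'' is not merely delicate --- it does not hold.

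The paper's route is much shorter and avoids this entirely: use $(i.2)$ directly rather than $(i.3)$. From $(i.2)$ one has $|h'_r|\ge |(h'_r,n)|\ge c_0=R_0/2$ on $\mathbb T$. On the other hand $h'_r=e^{i\theta}f'+\overline{e^{i\theta}g'}$, so $|h'_r|\le |f'|+|g'|<2|f'|$ by univalence. Hence $|f'|>R_0/4$ on $\mathbb T$. Since $f'$ is analytic and nonvanishing on $\overline{\mathbb D}$ (because $J_h>0$), the minimum modulus principle gives $|f'|\ge R_0/4$ on all of $\mathbb D$. The constant $R_0/4$ comes precisely from the factor $2$ in $|h'_r|<2|f'|$, not from any loss in propagating into the interior.
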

\textrm{\smallskip  A generalization of this result to several
variables  has been communicated  at Analysis  Belgrade Seminar,
cf. \cite{mm.unpub2}. }

Note that $(i.4)$  is  a corollary of  $(i.2)$.

{\it Outline of proof of} $(i.1)$.  To every  $a\in \partial D$  we
associate a nonnegative harmonic function $u=u_a$. Since  $D$ is
convex, for $a\in
\partial D$, there is a  supporting line $\Lambda_a$ defined by $(w-a,n_a)=0$, where
$n=n_a$  is  a unimodular complex number such that $(w-a,n_a)\geq
0$ for every $w\in \overline{D}$.
Define $u(z)=(h(z)-a,n_a)$  and  $d_a= d(h(0),\Lambda_a)$.   Then
$u(0)=(h(0)-a,n_a)= d(h(0),\Lambda_a)$ and therefore, by the mean
value theorem,
$$ \frac{1}{2\pi}\int_0^{2\pi} u(e^{it})dt= u(0)=d_a=
d(h(0),\Lambda_a).$$ Since  $u=u_a$  is   a nonnegative harmonic
function,  for    $z=re^{i\varphi}\in\mathbb{D}$, we  obtain
$$
u(z)\geq \frac{1-r}{1+r}\, \frac{1}{2\pi}\int_0^{2\pi}
u(e^{it})dt.
$$

Hence   $u(re^{i\varphi})\geq  d_a (1-r)/2$, and therefore
$|h(z)-a| \geq d_a (1-r)/2   \geq (1-r)R_0/2$. Thus $|h(z)-a| \geq
(1-r)R_0/2 $ for every  $ a\in
\partial D$  and therefore  we obtain  $(1)$: $d(h(z),\partial D)
\geq (1-r)R_0/2 $.   \hfill   $\Box$

Note that  if  $D$ is a convex domain,  then  in  general  for
$b\in
\partial D$ there is no inner normal.   However,  there is  a
supporting line $\Lambda_b$ defined by $(w-b,n_b)=0$, where
$n=n_b$  is  a unimodular complex number such that $(w-b,n_b)\geq
0$ for every $w\in \overline{D}$.

 Note that  proof  of theorem   can also be based on   Harnack's
theorem  (see also \cite{rud2},  Lemma 15.3.7)  or Hopf Lemma.\\

We use the notation  $\lambda_f = l_f (z)= |\partial f (z)|   -
|\bar\partial f (z)|$ \, and   $ \Lambda_f (z)= |\partial f (z)|   +
|\bar\partial f (z)|,$   if $\partial f(z)$   and  $\bar\partial
f (z)$ exist.

\begin{thm}[\cite{revroum01}]\label{t.in.belowA}
$(ii.1)$  Suppose  that   $\,h\,= f + \overline{g}$ is    a Euclidean
orientation preserving  harmonic mapping from $\mathbb{D}$ onto
bounded convex  domain $D=h(\mathbb{D})$, which contains a disc
$\,B(h(0);R_0)\,$. Then $ |f'| \geq R_0/4$ on
$\mathbb{D}$.\\
$(ii.2)$   Suppose, in addition,  that  $h$  is qc.  Then  $l_h\geq (1-k)
|f'|\geq (1-k)R_0/4$    on  \,$\mathbb{D}$   \\
$(ii.3)$  In particular,    $h^{-1}$ is Lipschitz.
\end{thm}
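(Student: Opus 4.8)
The plan is to derive $(ii.1)$ from the boundary estimate $(i.2)$ of Theorem \ref{t1} together with a Poisson-representation argument. First I would fix $z_0\in\mathbb{D}$ and, after a Möbius automorphism of the disc, reduce to the case $z_0=0$; note that under such a reduction the convex codomain still contains a disc $B(h(0);R_0)$ with the same $R_0$ by $(i.1)$ applied at the shifted center, so no loss of generality occurs. For $h=f+\overline g$ harmonic and orientation-preserving, $|f'(0)|=|\partial h(0)|$, and I would recover $\partial h(0)$ from the radial derivatives on the boundary: writing $h=f+\overline g$ with $f,g$ holomorphic, one has $f'(0)=\frac{1}{2\pi}\int_0^{2\pi} h_r'(e^{it})e^{-it}\,dt$ up to the conjugate-linear part, and similarly $\overline{g'(0)}$ is captured by the $e^{it}$ Fourier mode. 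Since $h$ is univalent, $|g'|<|f'|$, so it suffices to bound the relevant Fourier coefficient of $h_r^*$ from below. The key input is $(i.2)$: at a.e.\ boundary point $(h_r^*,n_\omega)\ge c_0=R_0/2$, i.e.\ the radial derivative has a guaranteed component of size $R_0/2$ in the inner-normal direction. Integrating $(h_r^*,n)$ against a suitable unimodular weight (the direction of $n_\omega$ varies, but the convexity of $D$ forces the winding of $\omega\mapsto n_\omega$ to be degree one) yields $|f'(0)|\ge \frac{1}{4}\cdot\frac{R_0}{2}\cdot 2 = R_0/4$; this is exactly the argument sketched in the note that "$(i.4)$ is a corollary of $(i.2)$," now localized to an arbitrary interior point via the automorphism. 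I expect the main obstacle here to be the boundary-regularity bookkeeping: $h_r^*$ and $h_b'$ need to exist a.e.\ on $\mathbb{T}$ (true since $h$ has quasiconformal, hence absolutely continuous, boundary values when $h$ extends, or by Fatou-type theorems for bounded harmonic maps onto a convex domain), and one must justify passing the Poisson integral to the boundary and integrating the inequality $(i.2)$.

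For $(ii.2)$, I would invoke the definition of $K$-quasiconformality in the plane: $h$ is $k$-qc means $|\overline\partial h|\le k\,|\partial h|$ pointwise, so $\lambda_h=|\partial h|-|\overline\partial h|\ge (1-k)|\partial h|=(1-k)|f'|$, and combining with $(ii.1)$ gives $\lambda_h\ge (1-k)R_0/4$ on all of $\mathbb{D}$. For $(ii.3)$, this uniform lower bound on the smallest singular value of $Dh$ means $|h(z_1)-h(z_2)|\ge \frac{(1-k)R_0}{4}\,\rho(z_1,z_2)$ is false literally for the Euclidean metric (the lower bound is infinitesimal), but integrating along the hyperbolic geodesic — or rather, noting that $\lambda_h\ge c>0$ together with the completeness argument — shows $h^{-1}$ is Lipschitz in the Euclidean metric: the standard step is that $\lambda_h \ge c$ implies, by integrating $d(h^{-1})$ along a segment in $D$, that $|h^{-1}(w_1)-h^{-1}(w_2)|\le c^{-1}|w_1-w_2|$, using that $D$ is convex so the segment $[w_1,w_2]\subset D$ and its $h^{-1}$-preimage is a curve whose length is controlled by $\int \lambda_h^{-1}\,|dw|\le c^{-1}|w_1-w_2|$. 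The genuinely delicate point throughout is that $\lambda_h$ is defined only a.e.\ and $h^{-1}$ need not be smooth; the cleanest fix is to use that a $k$-qc map is in $W^{1,2}_{loc}$, its inverse is $k$-qc as well, and $|(h^{-1})'|\le \lambda_h^{-1}\circ h^{-1}\le 4/((1-k)R_0)$ a.e., whence absolute continuity on lines plus convexity of $D$ gives the global Lipschitz bound. I would present $(ii.1)$ in full and treat $(ii.2)$–$(ii.3)$ as short corollaries, flagging the a.e.\ subtleties but relying on the already-cited quasiconformal machinery in \cite{mm.fil12a,kalaj.thesis}.
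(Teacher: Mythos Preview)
Your treatment of $(ii.2)$ and $(ii.3)$ is fine and matches the standard route. The problem is $(ii.1)$, where you take a genuinely different path from the paper and the path has real gaps.

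The paper's argument for $(ii.1)$ rests on two observations you never invoke. First, since $h$ is orientation-preserving and univalent, $f'=\partial h$ is holomorphic and nonvanishing, so $|f'|$ satisfies the \emph{minimum principle}: it suffices to bound $|f'|$ from below on $\mathbb{T}$. Second, on $\mathbb{T}$ one has $|h_r'|\le 2|f'|$, while $(i.2)$ gives $(h_r^*,n)\ge R_0/2$, hence $|f'|\ge R_0/4$ at the boundary and therefore on all of $\mathbb{D}$. This is exactly the content of $(i.4)$, and the paper then removes the smooth-boundary hypothesis by the approximation argument (hereditary convexity of $\phi(r_n\mathbb{D})$, conformal maps $\varphi_n:\mathbb{D}\to h^{-1}(G_n)$ extending across $\mathbb{T}$, Carath\'eodory convergence $\varphi_n\to\mathrm{id}$), applying $(i.4)$ to each $h_n=h\circ\varphi_n$ and passing to the limit.

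Your M\"obius-plus-Fourier plan avoids both of these ingredients and, as written, does not close. The M\"obius reduction does \emph{not} preserve $R_0$: after composing with $\varphi$ sending $0\mapsto z_0$, the new center is $h(z_0)$ and $(i.1)$ only guarantees $B(h(z_0);(1-|z_0|)R_0/2)\subset D$, while $|\partial(h\circ\varphi)(0)|=|\partial h(z_0)|\,(1-|z_0|^2)$; chasing the constants you would at best get $|f'(z_0)|\ge R_0/16$, not $R_0/4$. The Fourier step --- bounding $|f'(0)|=\bigl|\frac{1}{2\pi}\int_0^{2\pi}h_r^*(e^{it})e^{-it}\,dt\bigr|$ from below using only $(h_r^*,n_\omega)\ge R_0/2$ and a winding argument for $n_\omega$ --- is left as a sketch with no mechanism for turning a pointwise lower bound on one component into a lower bound on the modulus of an oscillatory integral. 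Finally, you correctly flag the boundary-regularity issue but do not resolve it; the paper handles it cleanly by the approximation, which produces maps $h_n$ that are harmonic across $\overline{\mathbb{D}}$ so that $(i.2)$ applies without any a.e.\ caveats. Replace your plan for $(ii.1)$ with: minimum principle for $|f'|$, boundary bound via $(i.2)$ and $|h_r'|\le 2|f'|$, and the approximation $h_n\to h$ to justify the boundary step.
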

A proof of the theorem can be based on   Theorem  \ref{t1} and \\
$\rm{(b)}$: the approximation of a convex domain with smooth
convex
domains,\\
which is based on  {\it the hereditary property of convex
functions: if an analytic function maps the unit disk univalently
onto  a convex domain, then it also maps each concentric  subdisk
onto a convex domain}. Now we outline an {\it approximation
argument for convex domain $G$}. Let $\phi$ be conformal mapping
of $\mathbb{D}$ onto $G$, $\phi^\prime(0)>0$, $G_n= \phi (r_n \mathbb{D})$,
$r_n=\frac{n}{n+1}$, $D_n= h^{-1}(G_n)$; and $\varphi_n$ conformal
mapping of $\mathbb{D}$ onto $D_n$, $\varphi_n (0)=0$,
$\varphi_n^\prime (0)>0$     and $h_n=h\circ \varphi_n$.  Since  $
D_n\subset D_{n+1}$  and $\cup D_n= \mathbb{D}$, we can apply the
Carath\'{e}odory theorem; $\varphi_n$ tends to $z$, uniformly on
compacts, whence $\varphi_n^\prime(z) \rightarrow 1$
($n\rightarrow \infty$). By hereditary property $G_n$ is convex.

Since the boundary of  $D_n$  is an analytic Jordan curve, the
mapping  $\varphi_n $  can be continued  analytically across
$\mathbb{T}$, which implies that  $h_n$ has a harmonic extension
across  $\mathbb{T}$.  An application of Theorem  \ref{t1} $(i.4)$  to
$h_n$   gives the proof.\\

\begin{exa}
$\rm{(ii.4)}$  \textrm{\textit{$f(z)=(z-1)^{2}$ is univalent on $\mathbb{D}$. Since $%
f^{\prime }(z)=2(z-1)$ it follows that $f^{\prime }(z)$ tends $0$ if $z$
tends $1$. This example shows that we can not drop the hypothesis that $f(%
\mathbb{D})$ is a convex domain in Theorem  \ref{t1} $(i.4)$. }}\\
$\rm{(ii.5)}$  \textrm{\textit{$g(z)=\sqrt{z+1}$ is univalent on
$\mathbb{D}$  is not Lipshitz on $\mathbb{D}$.}}\\
$\rm{(ii.6)}$ $g^{-1}(w)=w^2-1$  and  $(g^{-1})'(w)=2w$  tends to
$0$  if $w\in g(\mathbb{D})$ tends to $0$.
\end{exa}

\subsection{}\textrm{\textit{Hall, see \cite{dur} p. 66-68, proved the
following: }}

\begin{lem}[Hall lemma]
$\rm{(ii.7)}$ \textrm{\textit{For all harmonic univalent mappings $f$ of the
unit disk onto itself with $f(0)=0$, \newline
$|a_{1}|^{2}+|b_{1}|^{2}\geq c_{0}=\frac{27}{4\,\pi ^{2}}$,
\newline where $a_{1}=Df(0)$, $b_{1}=\overline{D}f(0)$ and
$c_{0}=\frac{27}{4\,\pi ^{2}}=0.\ 6839.\ .\ $. . }}\newline
$\rm{(ii.8)}$  If in addition  $f$  is  orientation preserving,  then  $|a_{1}|\geq \sigma_0$,  where  $\sigma_0= \frac{3  \sqrt{3}}{2 \sqrt{2}\,\pi}$.
\end{lem}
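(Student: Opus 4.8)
The plan is to push everything to the boundary, rewrite $a_1,b_1$ as Fourier coefficients, and reduce $\mathrm{(ii.7)}$ to a one–variable extremal problem. Replacing $f$ by $\overline f$ (which exchanges $a_1\leftrightarrow\overline{b_1}$ and so leaves $|a_1|^2+|b_1|^2$ unchanged), we may assume $f$ is sense preserving; then $f$ is a homeomorphism of $\mathbb D$ onto $\mathbb D$, so $|f(z)|\to1$ as $|z|\to1$ and the radial boundary values satisfy $f^{*}(e^{i\theta})=e^{i\phi(\theta)}$ a.e., where $\phi$ is weakly increasing with $\phi(\theta+2\pi)=\phi(\theta)+2\pi$. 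Writing $f=h+\overline g$ with $h(z)=\sum_{n\ge1}a_nz^{n}$, $g(z)=\sum_{n\ge1}b_nz^{n}$ (constant terms irrelevant), the numbers $a_1$ and $\overline{b_1}$ are exactly the Fourier coefficients of $f^{*}$ at $e^{i\theta}$ and $e^{-i\theta}$, so
\begin{equation*}
a_1=\frac1{2\pi}\int_0^{2\pi}e^{i(\phi(\theta)-\theta)}\,d\theta,\qquad
\overline{b_1}=\frac1{2\pi}\int_0^{2\pi}e^{i(\phi(\theta)+\theta)}\,d\theta .
\end{equation*}

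Next I would expand the two moduli as double integrals and use $\cos(x-y)+\cos(x+y)=2\cos x\cos y$ to obtain the clean identity
\begin{equation*}
|a_1|^{2}+|b_1|^{2}=\frac1{2\pi^{2}}\int_0^{2\pi}\!\!\int_0^{2\pi}\cos\big(\phi(s)-\phi(t)\big)\cos(s-t)\,ds\,dt ,
\end{equation*}
equivalently $\tfrac1{4\pi^{2}}\big(|\int u(s)e^{is}ds|^{2}+|\int u(s)e^{-is}ds|^{2}\big)$ with $u=e^{i\phi}$. Thus $\mathrm{(ii.7)}$ becomes the assertion that the functional $I(\phi)=\int_0^{2\pi}\!\int_0^{2\pi}\cos(\phi(s)-\phi(t))\cos(s-t)\,ds\,dt$ satisfies $I(\phi)\ge\tfrac{27}{2}$ over the class $\mathcal M$ of weakly increasing $\phi$ with $\phi(\cdot+2\pi)=\phi(\cdot)+2\pi$. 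The monotonicity constraint is indispensable: dropped, $I$ can be made negative, so the problem is genuinely constrained (this is why the interior estimate $|f'|\ge R_0/4$ of Theorem \ref{t1} $(i.4)$, giving only $|a_1|\ge\tfrac14$, is far too weak). I would attack it variationally: first show the infimum over $\mathcal M$ is attained (Helly selection plus lower semicontinuity of $I$), note $I$ is invariant under $\theta\mapsto\theta+c$ and $\phi\mapsto\phi+c$, derive the Euler–Lagrange condition $\mathrm{Im}\big(u(s)\overline{K(s)}\big)=0$ with $K(s)=\int u(t)\cos(s-t)\,dt$ on the set where $\phi'>0$ together with the one–sided inequality on the flat parts, and finally run a symmetrization/rearrangement argument forcing an extremal to have the $3$–fold symmetry $\phi(\theta+\tfrac{2\pi}3)=\phi(\theta)+\tfrac{2\pi}3$. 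Under that symmetry the problem collapses to an elementary computation yielding $|a_1|^{2}+|b_1|^{2}=\tfrac{27}{4\pi^{2}}$, the factor $3^{3}$ arising from the three $120^{\circ}$ sectors. The hard part is precisely this last step: justifying attainment of the infimum within $\mathcal M$ and, above all, that an extremal may be taken three–fold symmetric; once that is in place the evaluation is routine.

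Finally, $\mathrm{(ii.8)}$ follows at once from $\mathrm{(ii.7)}$. If $f$ is orientation preserving then $J_f=|\partial f|^{2}-|\overline\partial f|^{2}\ge0$ everywhere, so in particular $|b_1|=|\overline\partial f(0)|\le|\partial f(0)|=|a_1|$; hence $2|a_1|^{2}\ge|a_1|^{2}+|b_1|^{2}\ge\tfrac{27}{4\pi^{2}}$, i.e. $|a_1|\ge\big(\tfrac{27}{8\pi^{2}}\big)^{1/2}=\tfrac{3\sqrt3}{2\sqrt2\,\pi}=\sigma_0$.
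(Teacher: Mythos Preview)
The paper does not give its own proof of $\mathrm{(ii.7)}$; it simply quotes the inequality as Hall's result, with a pointer to Duren's book. Part $\mathrm{(ii.8)}$ is likewise stated without argument, and your derivation of it from $\mathrm{(ii.7)}$ is exactly the intended one: for an orientation-preserving harmonic univalent $f$ one has $|b_1|=|\overline\partial f(0)|\le|\partial f(0)|=|a_1|$, hence $2|a_1|^2\ge|a_1|^2+|b_1|^2\ge 27/(4\pi^2)$, i.e.\ $|a_1|\ge\sigma_0$. So on $\mathrm{(ii.8)}$ there is nothing to compare---your argument is correct and is what the paper leaves implicit.

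For $\mathrm{(ii.7)}$ your reduction is the right one and matches Hall's setup: pass to the (a.e.\ unimodular, monotone) radial boundary values, read off $a_1,\overline{b_1}$ as the Fourier coefficients $\widehat{e^{i\phi}}(\pm1)$, and turn the inequality into the constrained extremal problem $I(\phi)\ge 27/2$ over weakly increasing $\phi$ with $\phi(\theta+2\pi)=\phi(\theta)+2\pi$. But the step you flag as ``hard'' is a genuine gap, not a formality. The assertion that a minimizer can be taken with three-fold symmetry does not follow from any standard symmetrization or rearrangement lemma; the functional $I$ is not of the type to which Riesz-type rearrangement inequalities apply, and the monotonicity constraint interacts badly with naive symmetrizations. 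Hall's actual proof (the one Duren reproduces) does \emph{not} argue by symmetry reduction: it first shows the infimum over $\mathcal M$ is approached through step functions (boundary maps collapsing $\mathbb T$ onto finitely many points of $\mathbb T$), and then carries out a direct and fairly intricate trigonometric analysis of the resulting finite sums, from which the value $27/(4\pi^2)$ and the extremal three-point configuration emerge. So your outline is on the right track in its reduction, but the endgame you sketch is not the one that is known to work, and you have not supplied an alternative.

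A smaller point: the claim $f^{*}(e^{i\theta})=e^{i\phi(\theta)}$ with $\phi$ weakly increasing deserves a line of justification. A harmonic univalent self-map of $\mathbb D$ need not extend to a homeomorphism of $\overline{\mathbb D}$; what one gets is a monotone, possibly discontinuous, boundary correspondence (arcs of $\mathbb T$ may collapse to points). That is still enough for your Fourier-coefficient identities, but it should be stated rather than assumed.
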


Hence, one can derive:

$\rm{(I0)}$    There is a constant $c>0$  such that
if  $h$ is harmonic planar mapping   and   $B(hx,R)\subset
h(B(x,r))$, then $r \Lambda_h(x) \geq c R$.

Now we give another proof and generalization of the part $(ii.3)$
of  Theorem  \ref{t.in.belowA}, which is not based on the
approximation of a convex domain with smooth convex domains.

\begin{thm}\label{1clip}
Suppose  that ${\rm (a.1)}$:   $D$  and  $D_*$  are simply-connected hyperbolic domains in
$\mathbb{R}^2$ with non-empty boundary  and that   $\,f\,$ is    a euclidean  harmonic   univalent
mapping from $D$ onto
$D_*$. Then  \\
${\rm (iii.1)}$  $d \Lambda_f \succeq d_*$  on $D$, where $w=f(z)$, $d=dist(z, \partial D)$ and  $d_* (w)=dist(w, \partial D_*)$.  \\
${\rm (iii.2)}$  If in addition to hypothesis  ${\rm (a.1)}$ we
suppose that ${\rm (a.2)}$: $D$ is a $C^{1,\alpha}$, $0 < \alpha
<1$, domain and $D_*$ is convex  bounded domain, then there is a
constant $c>0$ such that $\Lambda_f \succeq c$ on $D$.
\end{thm}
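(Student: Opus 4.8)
The plan is to derive both parts from the one-point estimate in Theorem~\ref{t1} together with a normalization by conformal (or M\"obius) maps, thereby avoiding any boundary approximation. For $(iii.1)$, fix $z_0 \in D$ and set $w_0 = f(z_0)$, $d = d(z_0,\partial D)$, $d_* = d_*(w_0)$. Let $\psi$ be a conformal map of $\mathbb{D}$ onto $D$ with $\psi(0) = z_0$; by the Koebe and Schwarz lemmas one has $\tfrac14 d \le |\psi'(0)| \le d$, so $|\psi'(0)| \asymp d$. The composition $h = f \circ \psi$ is a euclidean harmonic univalent map of $\mathbb{D}$ onto $D_*$ with $h(0) = w_0$. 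Now $D_*$ is only assumed simply connected and hyperbolic, so it need not be convex; here I would instead invoke $(I0)$ (the Hall-type statement): since the disc $B(w_0, d_*)$ lies in $D_* = h(\mathbb{D})$, we get $\Lambda_h(0) \succeq d_*$. Finally the chain rule gives $\Lambda_h(0) \le \Lambda_f(z_0)\,|\psi'(0)| \asymp \Lambda_f(z_0)\, d$, and combining the two displays yields $d\,\Lambda_f(z_0) \succeq d_*$, as desired. The point $z_0$ was arbitrary, so $(iii.1)$ follows.

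For $(iii.2)$, I would combine $(iii.1)$ with a uniform lower bound on the boundary distance ratio. Since $D_*$ is convex and bounded, it contains some fixed disc, so $d_*(w) \ge c_1 (1-|\zeta|)$ is not quite what we want directly; rather, the key is that $d$ is bounded above on the bounded-away-from-boundary part of $D$ while near $\partial D$ we need a matching lower bound on $d_*$. The natural route: by $(iii.1)$ it suffices to show $d_* \succeq d$ on $D$, i.e.\ that $f^{-1}$ (equivalently $f$) does not contract boundary distances too badly. Using that $D$ is $C^{1,\alpha}$ and $D_*$ is convex, one expects $d_*(f(z)) \succeq d(z)$: this is exactly the kind of estimate furnished by applying $(i.2)$ of Theorem~\ref{t1} at boundary points — the inner normal exists at every point of $\partial D_*$ up to a null set because $D_*$ is convex, and the $C^{1,\alpha}$ regularity of $D$ controls the harmonic extension / boundary behaviour of $f$ near $\partial D$. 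Away from the boundary, $\Lambda_f$ is bounded below by compactness and continuity (interior regularity of harmonic maps), so it remains only to glue the interior estimate with the boundary estimate, which is routine once both are in hand.

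The main obstacle is the boundary step in $(iii.2)$: establishing $d_*(f(z)) \succeq d(z)$ uniformly as $z \to \partial D$. This requires knowing that the harmonic map $f$, precomposed with the Riemann map of $\mathbb{D}$ onto the $C^{1,\alpha}$ domain $D$, extends regularly enough to the boundary that Theorem~\ref{t1}$(i.2)$ applies — the $C^{1,\alpha}$ hypothesis is there precisely to give, via Kellogg-type regularity of the conformal map, the existence of $h_r^*$ and of one-sided boundary control. One then transports the lower bound $(h_r^*, n) \ge c_0$ back through the conformal change of variables, using that $|\psi'|$ is bounded above and below near $\partial\mathbb{D}$ (again Kellogg), to conclude $\Lambda_f \succeq c$ near $\partial D$. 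I would expect the write-up to spend most of its effort justifying this boundary regularity transfer; the interior part and the algebra of the chain rule are straightforward.
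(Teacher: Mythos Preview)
Your argument for $(iii.1)$ is correct and is essentially identical to the paper's: pick a Riemann map $\psi:\mathbb{D}\to D$ with $\psi(0)=z_0$, apply the Hall-type estimate $(I0)$ to $h=f\circ\psi$ to get $\Lambda_h(0)\succeq d_*$, use Koebe to get $|\psi'(0)|\asymp d$, and conclude via the chain rule.

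For $(iii.2)$ you make the right reduction --- it suffices to prove $d_*\succeq d$ --- but then you overcomplicate and in fact discard the one-line finish. You write ``$d_*(w)\ge c_1(1-|\zeta|)$ is not quite what we want directly''; in fact it \emph{is} exactly what we want. With $\phi:\mathbb{D}\to D$ conformal and $\zeta=\phi^{-1}(z)$, Theorem~\ref{t1}$(i.1)$ applied to $f\circ\phi:\mathbb{D}\to D_*$ (convex codomain) gives $d_*(f(z))\succeq 1-|\zeta|$. Kellogg's theorem for the $C^{1,\alpha}$ domain $D$ gives $|\phi'|$ bounded above and below on $\mathbb{D}$, hence (via Koebe) $1-|\zeta|\asymp d(z)$. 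Combining, $d_*\succeq d$, and then $(iii.1)$ gives $\Lambda_f\succeq d_*/d\succeq c$. This is precisely the paper's proof: two lines, no boundary analysis of $f$, no gluing.

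Your proposed route through Theorem~\ref{t1}$(i.2)$ and boundary values of $h_r^*$ is not only longer but has a genuine gap: the hypotheses of $(iii.2)$ place no regularity on $f$ beyond harmonicity and univalence, so you have no a~priori reason for $h_r^*$ (or any boundary derivative of $f\circ\phi$) to exist, let alone to be continuous enough to propagate a pointwise boundary bound into a neighbourhood. The interior/boundary gluing you describe would therefore require an extra regularity theorem you do not state. The paper avoids all of this because $(i.1)$ is an interior estimate valid on all of $\mathbb{D}$ with no boundary hypotheses on the harmonic map.
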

By Theorem \ref{oneone}(Kellogg) we can reduce the proof to the
case $D=\mathbb{D}$.
\begin{proof}
Let  $z\in D$  and  $\phi_0$ conformal of   $\mathbb{B}$ onto  $D$
such that   $\phi_0(0)= z$  and $F=h\circ \phi_0$.  Since
$\Lambda_F= \Lambda_f |\phi_0'|$,
 by $\rm{(I0)}$    version of Hall  lemma,  $\Lambda_F\succeq d_*$,  $|\phi_0'|\asymp d$.
Hence  we find   ${\rm (iii.1)}$:  $d \Lambda_f \succeq d_*$.

Now let  $z=\phi(z')$ be  a conformal of   $\mathbb{B}$ onto  $D$
and $d'=dist(z')$.  Then  $d' |\phi'|\asymp d$  and by Theorem
\ref{oneone}(Kellogg)  ${\rm (iii.3)}$:  $d' \asymp d$.

Since $D_*$   convex, $d_* \succeq d'$ and therefore by ${\rm
(iii.3)}$  we find $d_* \succeq d$. Hence by ${\rm (iii.1)}$
$\Lambda_f \succeq c$ on $D$.
\end{proof}

The following result is an immediate  corollary of Theorem
\ref{1clip}.

\begin{thm}
If in addition to  hypothesis ${\rm (a.1)}$ and ${\rm (a.2)}$ of
Theorem \ref{1clip} $\,f\,$ is qc, then $f^{-1}$  is Lipschitz  on
$D_*$.
\end{thm}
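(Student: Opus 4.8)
The plan is to deduce this directly from Theorem \ref{1clip}, so that almost nothing beyond bookkeeping remains. Under hypotheses ${\rm (a.1)}$ and ${\rm (a.2)}$, Theorem \ref{1clip}${\rm (iii.2)}$ gives a constant $c>0$ with $\Lambda_f\succeq c$ on $D$. The extra assumption now is that $f$ is $K$-quasiconformal for some $K$, i.e.\ $\Lambda_f\leq K\lambda_f$ on $D$ (equivalently $|\bar\partial f|\leq k\,|\partial f|$ with $k=(K-1)/(K+1)<1$, using the notation introduced before Theorem \ref{t.in.belowA}). Combining these two inequalities yields $\lambda_f\succeq c/K$ on $D$; that is, there is a constant $c_1>0$ with $\ell(f'(z))\geq c_1$ for all $z\in D$.

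First I would record that $f:D\to D_*$ is a homeomorphism (univalent, and onto $D_*$ by hypothesis) which is differentiable a.e.\ with the uniform lower bound $\ell(f'(z))\geq c_1$; since $f$ is also quasiconformal, $f^{-1}:D_*\to D$ is quasiconformal as well, hence $ACL$ and differentiable a.e., with $(f^{-1})'(w)=\bigl(f'(f^{-1}(w))\bigr)^{-1}$ at every point $w$ where $f$ is differentiable with nonvanishing Jacobian at $z=f^{-1}(w)$. At such points $|(f^{-1})'(w)|=1/\ell(f'(z))\leq 1/c_1$. Thus $|(f^{-1})'|\leq 1/c_1$ a.e.\ on $D_*$.

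The remaining point is to pass from the a.e.\ derivative bound to a genuine Lipschitz estimate on $D_*$. Because $f^{-1}$ is $ACL$ (being quasiconformal) and $D_*$ is convex, I can integrate $(f^{-1})'$ along the straight segment joining any two points $w_1,w_2\in D_*$ — the segment lies in $D_*$ by convexity — after a standard Fubini/absolute-continuity-on-lines argument to justify that $f^{-1}$ is absolutely continuous on almost every such segment and that the fundamental theorem of calculus applies; this gives $|f^{-1}(w_1)-f^{-1}(w_2)|\leq (1/c_1)\,|w_1-w_2|$ first for a.e.\ pair and then, by continuity of $f^{-1}$, for all pairs. Hence $f^{-1}$ is Lipschitz on $D_*$ with constant $1/c_1$.

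The main obstacle is precisely this last measure-theoretic step: turning "the differential is bounded a.e." into a pointwise Lipschitz bound requires knowing that $f^{-1}$ does not "jump" along the relevant lines, which is why one leans on the $ACL$-property built into quasiconformality together with the convexity of $D_*$ (so that the connecting segments stay in the domain). I expect everything else — extracting $\lambda_f\succeq c$ from $\Lambda_f\succeq c$ and the $K$-qc inequality, and inverting the derivative — to be routine. One could alternatively cite the co-Lipschitz results referenced in the introduction (e.g.\ \cite{mm.fil12a,aaa}) to shortcut the $ACL$ argument, but the self-contained route above is cleaner given that Theorem \ref{1clip} has already done the analytic work.
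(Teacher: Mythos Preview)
Your proof is correct and follows the same route as the paper's one-line argument, which simply observes that the qc condition yields $\lambda_f \succeq \Lambda_f \succeq c$ and declares the rest routine. Your only excess is the $ACL$/a.e.\ apparatus: since $f$ is harmonic it is $C^\infty$, so $\lambda_f \geq c_1$ holds pointwise and $f$ is a global diffeomorphism onto $D_*$; hence $f^{-1}$ is $C^1$ with $|(f^{-1})'|\leq 1/c_1$ everywhere, and the mean-value inequality along segments in the convex $D_*$ finishes the job without any measure-theoretic detour.
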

\begin{cor}
In particular, if $\,f\,$ is conformal, then $f^{-1}$  is Lipschitz
on $D_*$.
\end{cor}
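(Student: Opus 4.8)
The plan is to recognize that the final corollary is the special case of the preceding theorem in which the harmonic univalent map $f$ is in fact conformal, so I would simply verify that a conformal map satisfies the hypotheses of the quasiconformality-plus-Theorem~\ref{1clip} statement. First I would note that a conformal (analytic univalent) map $f:D\to D_*$ is trivially $1$-quasiconformal, since for an analytic map $\bar\partial f\equiv 0$, hence $\Lambda_f=\lambda_f=|f'|$ and $J_f=|f'|^2$, so the $K$-qc inequality holds with $K=1$. Thus, under hypotheses ${\rm (a.1)}$ and ${\rm (a.2)}$ of Theorem~\ref{1clip}, a conformal $f$ meets every condition of the immediately preceding theorem.

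Next I would invoke that theorem directly: with ${\rm (a.1)}$ ($D$, $D_*$ simply connected hyperbolic domains, $f$ a euclidean harmonic univalent map of $D$ onto $D_*$), ${\rm (a.2)}$ ($D$ a $C^{1,\alpha}$ domain, $D_*$ convex bounded), and $f$ qc, the theorem yields that $f^{-1}$ is Lipschitz on $D_*$. Since conformal maps are a subclass of qc harmonic univalent maps, the conclusion is immediate. For completeness I would also trace the mechanism: Theorem~\ref{1clip}${\rm (iii.2)}$ gives $\Lambda_f\succeq c>0$ on $D$; combined with qc (here $K=1$) this gives $\lambda_f\succeq c$, i.e. $\lambda_{f^{-1}}=1/\Lambda_f\preceq 1/c$ after passing through $f$, so $|(f^{-1})'(w)|$ is bounded above on $D_*$, whence $f^{-1}$ is Lipschitz. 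In the conformal case this is even more transparent: $\lambda_f=\Lambda_f=|f'|\succeq c$, so $|(f^{-1})'(w)| = 1/|f'(z)| \preceq 1/c$.

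The only point requiring a word of care, and the step I expect to be the main (though mild) obstacle, is confirming that the $C^{1,\alpha}$-regularity assumption on $D$ and the boundedness/convexity of $D_*$ are genuinely available in the conformal setting — i.e. that nothing in the reduction to $D=\mathbb{D}$ via Kellogg's theorem (Theorem~\ref{oneone}) degenerates when $f$ is analytic. Since Kellogg's theorem is precisely a statement about conformal maps of $C^{1,\alpha}$ domains, this is not merely unobstructed but in fact the classical case the theory is modeled on; the harmonic generalization only widens the class. Hence the corollary follows with no additional argument beyond the observation ``conformal $\Rightarrow$ $1$-qc harmonic univalent,'' and I would present it as a one-line deduction from the theorem it follows.
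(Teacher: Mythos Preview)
Your proposal is correct and mirrors the paper's approach: the paper's proof (placed after the corollary) simply notes that from $f$ being qc one gets $\lambda_f \succeq \Lambda_f \succeq c$ and declares the rest routine, and your argument specializes this to the conformal case $K=1$ where $\lambda_f=\Lambda_f=|f'|$. The one-line deduction ``conformal $\Rightarrow$ $1$-qc harmonic univalent, hence the preceding theorem applies'' is exactly what the paper intends.
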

\begin{proof}
 Using that $\,f\,$ is qc, it follows that $\lambda_f \succeq
\Lambda_f \succeq c$ and the rest of the proof is routine.
\end{proof}
Let  $G$ be simply connected  hyperbolic planar domain  and  $\rho=\rho_G^{hyp}$  hyperbolic density;  we also  use short  notation  $\rho_{hyp}=\rho_G$.
Using  the uniformization theorem, one can define  hyperbolic density  for a  hyperbolic planar domain.

If   $G$  is  a  planar  domain  with non-empty boundary,  $f$  a  $C^1$   complex valued mapping  $d=d_G$ and  $d_*=d_{f(G)}$,
we define $H:=|\partial f|^{qv}_{hyp}=\frac{d(z)}{d_*(f(z))} |f_z|$.  In addition if $G$ is  hyperbolic planar domain, $\rho=\rho_G$
and $\rho_*=\rho_{f(G)}$,  we define    $H:=|\partial f|_{hyp}=\frac{\rho_*(f(z))}{\rho(z))} |f_z|$.

\begin{thm} If  $G$ is  simply connected, then\newline
${\rm (iv.1)}$    $d_G (w) \leq  \rho_G^{-1} \leq 8 d_G (w)$.\newline
There is an absolute  constant $c$   such that
under the hypothesis  ${\rm (a.1)}$ of Theorem \ref {1clip}  if $f$  in addition  orientation preserving, we have   \newline
${\rm (iv.2)}$ $H:=|\partial f|^{qv}_{hyp}=\frac{d(z)}{d_*(f(z))} |f_z| \geq c $.\newline
${\rm (iv.3)}$      $H:=|\partial f|_{hyp}\geq c/8$.
\end{thm}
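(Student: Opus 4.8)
The plan is to deduce the final theorem directly from Theorem \ref{1clip}$(iii.1)$ together with the two-sided comparison between the hyperbolic density and the distance function on a simply connected domain. First I would establish ${\rm (iv.1)}$, which is the classical Koebe-type estimate: for a simply connected hyperbolic domain $G$ one has $\frac{1}{4 d_G(w)} \le \rho_G(w) \le \frac{1}{d_G(w)}$, hence after the normalization used here $d_G(w) \le \rho_G^{-1}(w) \le 8\, d_G(w)$. This is standard via the Koebe $1/4$-theorem and the Schwarz lemma applied to a uniformizing conformal map $\mathbb{D}\to G$; I would cite it rather than reprove it, since the constant $8$ already accommodates the slack in both inequalities.

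Next, for ${\rm (iv.2)}$, observe that Theorem \ref{1clip}$(iii.1)$ gives $d(z)\,\Lambda_f(z) \succeq d_*(f(z))$ on $D$, where the implied constant is absolute (it comes from the Hall-lemma normalization ${\rm (I0)}$). Since $f$ is orientation preserving and univalent, $\Lambda_f = |f_z| + |f_{\bar z}| \le 2|f_z|$, so in fact $d(z)\,|f_z| \succeq d_*(f(z))$ with an absolute constant. Dividing by $d_*(f(z))$ yields
\begin{equation*}
H = \frac{d(z)}{d_*(f(z))}\,|f_z| \ge c
\end{equation*}
for an absolute $c>0$, which is precisely ${\rm (iv.2)}$. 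The only point to be slightly careful about is that $d_*(f(z))>0$ for $z\in D$, which holds because $f(z)$ is an interior point of $D_*$ (as $f$ is a homeomorphism onto $D_*$), and that the estimate $\Lambda_f\le 2|f_z|$ uses orientation preservation, i.e. $|f_{\bar z}|<|f_z|$.

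Finally, ${\rm (iv.3)}$ follows by converting the quasihyperbolic quantity $|\partial f|^{qv}_{hyp}$ into the genuine hyperbolic quantity $|\partial f|_{hyp}$ using ${\rm (iv.1)}$ on both $G=D$ and $G_*=D_*$. Indeed
\begin{equation*}
|\partial f|_{hyp} = \frac{\rho_*(f(z))}{\rho(z)}\,|f_z| = \frac{d(z)}{d_*(f(z))}\cdot \frac{d_*(f(z))\,\rho_*(f(z))}{d(z)\,\rho(z)}\,|f_z|,
\end{equation*}
and by ${\rm (iv.1)}$ the ratio $\frac{d_*(f(z))\,\rho_*(f(z))}{d(z)\,\rho(z)}$ lies in $[\tfrac{1}{8},1]$, so $|\partial f|_{hyp} \ge \tfrac{1}{8} H \ge c/8$. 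The main obstacle, such as it is, is not a deep one: it is simply making sure all the implied constants through Theorem \ref{1clip} and ${\rm (I0)}$ are absolute (independent of $D$, $D_*$, $f$), so that ${\rm (iv.2)}$ and ${\rm (iv.3)}$ genuinely hold with absolute constants as claimed; everything else is bookkeeping with the Koebe distortion bounds.
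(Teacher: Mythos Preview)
Your proof is correct and follows essentially the same route as the paper: both obtain ${\rm (iv.2)}$ by composing $f$ with a conformal map $\phi_0:\mathbb{D}\to D$ sending $0$ to the given point and then invoking Hall's lemma together with the Koebe bound $|\phi_0'(0)|\le 4d$; the only cosmetic difference is that the paper applies Hall's estimate ${\rm (ii.8)}$ directly to $|F_z(0)|$ (yielding the explicit constant $c=\sigma_0/4$), whereas you factor through the already-proved Theorem~\ref{1clip}${\rm (iii.1)}$ and then use $\Lambda_f\le 2|f_z|$. Your derivation of ${\rm (iv.3)}$ from ${\rm (iv.1)}$ and ${\rm (iv.2)}$ is exactly the intended one (the paper does not spell it out), with the minor correction that the ratio $\frac{d_*\rho_*}{d\rho}$ lies in $[1/8,8]$ rather than $[1/8,1]$, which does not affect the needed lower bound.
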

\begin{proof}
Let   $\phi : \mathbb{D} \rightarrow G$ be   conformal.
Then
$\rho(w) |\phi'(z)| = \rho(z)$  and $d_G (w) \leq d_\mathbb{D }(z)|\phi'(z)|\leq 4 d_G (w) $.  Hence  $d_G (w) \leq  \rho_G^{-1} \leq 8 d_G (w)$.

Set   $w= \phi(z)$,    $\zeta=f(w) $ and  $F=f\circ \phi$.   Since  $F_z =f_w \phi'(z)$,    by an application of Hall lemma, $\rm{(ii.8)}$, to  $F$,  we find   $|f_w(w)|  |\phi'(z)|\geq d_* \sigma_0$.  If $w\in G$, we can choose  a conformal mapping  $\phi_0: \mathbb{D} \rightarrow G$  such that  $w= \phi_0(0)$. Then  $|\phi_0'(0)|\leq 4d$ and    therefore  $4d |f_w(w)|\geq d_* \sigma_0$. Hence we  get  ${\rm (iv.2)}$  with $c= \sigma_0/4$.
\end{proof}
The part ${\rm (iv.3)}$  has also been proved by Kalaj.

\subsection{The minimum principle for the Jacobian}
(I1)  Let $h :\Omega
 \rightarrow \mathbb{C}$   be a harmonic map whose Jacobian
determinant   $J = |h_z|^2 -  |h_{\overline{z}}|^2 $  is positive
everywhere in $\Omega$. Then $-\ln J$ is subharmonic; More
precisely, cf.  \cite{Iw14,IwOn},

$$-\frac{1}{4} \Delta \ln J= -(\ln J)_{z\overline{z}} = \frac{|h_{zz}\overline{h_{\overline{z}}} - \overline{h_{\overline{z}\,\overline{z}}}   h_z|^2}{J^2}\,.$$
Note that in \cite{man}  it is  proved previously  that\\
(I2)    $X= log
\frac{1}{J_h}$ is a subharmonic function.

We left to the reader to check  the following fact (I3-I6):

If $F$ is an  analytic  function, then   $|F|^2_{z\overline{z}}= |F'|^2$.  Hence

(I3)  $J(h)_{z}=f'' \overline{f'} -g'' \overline{g'}   $,  $J(h)_{z\overline{z}}= |f''|^2- |g''|^2 $.

(I4) In general,   $J(h)$ nether subharmonic nor   superharmonic.

If   $\tau:D\rightarrow I $, $I=(a,b)$,  $\chi:I \rightarrow R$, then

(I5)   $(\chi\circ\tau)  _{z\overline{z}}= (\chi''\circ\tau)  \tau_{\overline{z}} \tau_{z} + (\chi'\circ\tau) \tau_{z\overline{z}} =(\chi''\circ\tau)|\tau_{z}|^2 + (\chi'\circ\tau) \tau_{z\overline{z}} $.

If we set  $\chi(x)=x^{-1}$  and  $\tau=J $, we find

(I6) $-(J^{-1})_{z\overline{z}} J^3=|\tau_{z}|^2 + |B+C|^2$.

 If we set  $\chi=  log$,   $\tau=J $,   $B= f' g''$  and   $C= g' f''$, then

(I7) $-(\ln J)_{z\overline{z}} J^2=|B+C|^2$.

(I8) Suppose that    $F$ and $H$   are analytic function  in  a domain $G$ such that
$|F|^2
= m_0 + |H|^2$ on $G$, where $m_0$ is a positive constant.  Then    $F_z \overline{F}= H_z \overline{H}$  and therefore

$H'/F'= \overline{F}/\overline{H}$. Hence $H'/F'=a_0$, where $a_0$ is a  constant  and therefore  $H/F=a_0 z +a_1$.

$|F|^2
= m_0 + |a_0 z +a_1|^2  |F|^2$ on $G$.  Without loss of generality we can suppose that  $0 \in G$ and that  $F=b_0 +b_n z^n + o(z^n)$, $n\geq 1$.

This leads to a contradiction and so  $F=b_0$ and therefore $H=c_0$ on $G$, where  $b_0$ and $c_0$ are   constants.

\begin{prop}
$\mathrm{(a.2)}$  If $h$ is harmonic on $U$ and  $J(h)$  attains minimum
(different from $0$) at interior point $a$, then  $J(h)$  is
constant  function, and  $h$ is affine.

$\mathrm{(b.2)}$     Functions  $log \frac{1}{J_h}$ and $\frac{1}{J_h}$ are
subharmonic function.

$\mathrm{(c.2)}$ In particular,  $log J_h$  is a  superharmonic function.
\end{prop}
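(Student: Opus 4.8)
The plan is to read off all three statements from the pointwise identities (I1)--(I8) recorded above, together with the strong maximum principle for subharmonic functions; essentially no new computation is needed. Throughout I work on a (connected) domain on which $J_h>0$; in part $\mathrm{(a.2)}$ this is no loss, since if $J_h$ has a local minimum with positive value at $a$ then $J_h>0$ near $a$, which is all that is used.

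For $\mathrm{(b.2)}$ and $\mathrm{(c.2)}$ I would argue as follows. By (I1),
\[
\Delta\!\left(\log\tfrac1{J_h}\right)=4\,(-\ln J_h)_{z\bar z}=\frac{4\,\bigl|h_{zz}\overline{h_{\bar z}}-\overline{h_{\bar z\bar z}}\,h_z\bigr|^{2}}{J_h^{2}}\ \ge\ 0 ,
\]
so $\log\frac1{J_h}=-\ln J_h$ is subharmonic, which is the first half of $\mathrm{(b.2)}$; changing sign, $\log J_h$ is superharmonic, which is $\mathrm{(c.2)}$. For the remaining half of $\mathrm{(b.2)}$ I would note only that $t\mapsto e^{t}$ is convex and increasing, so $\frac1{J_h}=e^{\log(1/J_h)}$ is subharmonic, being the composition of a convex increasing function with a subharmonic one. (The same conclusion also falls out of (I5)--(I7), which give $(J_h^{-1})_{z\bar z}\,J_h^{3}=|(J_h)_z|^{2}+|B+C|^{2}\ge 0$ with $B=f'g''$, $C=g'f''$; this is the intended reading of the sign in line (I6).)

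For $\mathrm{(a.2)}$: the function $\log\frac1{J_h}$ is subharmonic and attains the interior maximum $-\ln J_h(a)$ at $a$, so by the strong maximum principle it is constant on the component of $U$ containing $a$, i.e.\ $J_h\equiv m_0:=J_h(a)>0$ there. Writing $h=f+\overline{g}$ with $f,g$ holomorphic, this reads $|f'|^{2}=m_0+|g'|^{2}$. Invoking the rigidity statement (I8) with $F=f'$, $H=g'$ forces $f'$ and $g'$ to be constant; hence $f(z)=\alpha z+\beta$, $g(z)=\gamma z+\delta$, and $h(z)=\alpha z+\overline{\gamma}\,\overline{z}+c$ is a real-affine mapping, with $J_h\equiv|\alpha|^{2}-|\gamma|^{2}$ on all of the connected domain by analytic continuation.

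The one step that is not purely mechanical is the rigidity input (I8), and that is where I expect the actual work to be. A clean way to carry it out: from $|f'|^{2}\ge m_0>0$ the function $f'$ is zero-free, so $\Phi:=g'/f'$ is holomorphic, and dividing the identity by $|f'|^{2}$ gives $|\Phi|^{2}=1-m_0/|f'|^{2}$. Since $\log(1/|f'|^{2})=-2\log|f'|$ is harmonic, $1/|f'|^{2}$ is subharmonic, so $1-m_0/|f'|^{2}$ is superharmonic; but $|\Phi|^{2}$ is subharmonic. Hence $|\Phi|^{2}$ is harmonic, $\Phi'\equiv 0$, $\Phi\equiv a_0$ a constant with $|a_0|<1$, and then $|f'|^{2}(1-|a_0|^{2})=m_0$ makes $|f'|$ constant, hence $f'$ constant and $g'=a_0 f'$ constant. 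Beyond this lemma, every step is a direct appeal to a formula or principle already in the paper, so I anticipate no further obstacle.
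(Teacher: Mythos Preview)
Your proof is correct, and for parts $\mathrm{(b.2)}$ and $\mathrm{(c.2)}$ it is essentially identical to the paper's: both read off subharmonicity of $-\ln J_h$ from (I1)/(I2) and then compose with the convex increasing exponential.

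For $\mathrm{(a.2)}$ you take a slightly different and somewhat cleaner route than the paper. The paper introduces the auxiliary function
\[
s(z)=\frac{m}{|f'(z)|^{2}}+\frac{|g'(z)|^{2}}{|f'(z)|^{2}},
\]
observes that $s\le 1$, $s(a)=1$, and that $s$ is subharmonic (being $m|1/f'|^{2}+|g'/f'|^{2}$), and concludes $s\equiv 1$, i.e.\ $J_h\equiv m$. You instead apply the strong maximum principle directly to $-\ln J_h$, which is already known to be subharmonic from (I1), and reach $J_h\equiv m_0$ without constructing $s$. Both arguments then invoke (I8) to pass from $|f'|^{2}=m+|g'|^{2}$ to $f',g'$ constant. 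Your approach has the advantage of not needing a separate verification that $s$ is subharmonic; the paper's approach has the minor advantage of not relying on (I1) for part $\mathrm{(a.2)}$, keeping that part more self-contained.

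Your alternative justification of (I8) via $\Phi=g'/f'$ and the sub/superharmonic squeeze on $|\Phi|^{2}$ is also different from the paper's, which instead differentiates $|F|^{2}=m_0+|H|^{2}$ to get $H'/F'=\overline{F}/\overline{H}$, forces this ratio to be constant, and reaches a contradiction via a power-series expansion. Both are valid; yours is arguably more transparent.
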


\begin{cor}[Minimum Principle] Let $h :\Omega
 \rightarrow \mathbb{C}$  be a harmonic
map whose Jacobian determinant $J$  is positive everywhere in
$\Omega$. Then   $\inf_ F J \geq inf _{\partial F} J$ for every
compact $F\subset \Omega$.
\end{cor}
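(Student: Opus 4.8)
The plan is to deduce the corollary directly from the subharmonicity of $-\ln J$ recorded in (I1) and in part (b.2) of the preceding Proposition, via the classical maximum principle for subharmonic functions. Since each component of $h$ is harmonic, $h$ is real-analytic, so $J=|h_z|^2-|h_{\overline z}|^2$ is a smooth function on $\Omega$; the hypothesis $J>0$ everywhere then makes $u:=-\ln J$ a well-defined smooth function, and the identity in (I1) gives $-\frac{1}{4}\Delta u=|h_{zz}\overline{h_{\overline z}}-\overline{h_{\overline z\,\overline z}}\,h_z|^2/J^2\ge 0$, so $u$ is subharmonic on $\Omega$.

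Next I would invoke the maximum principle: for any compact $F\subset\Omega$, the subharmonic function $u$ satisfies $\max_F u=\max_{\partial F}u$. (If $F$ has empty interior this is trivial; otherwise one applies the maximum principle on each component of $\mathrm{int}\,F$, using continuity of $u$ up to $\partial F$.) Rewriting this in terms of $J=e^{-u}$ and using that $t\mapsto e^{-t}$ is strictly decreasing yields $\min_F J=\min_{\partial F}J$, and in particular $\inf_F J\ge\inf_{\partial F}J$, which is the claim. Equivalently, one can run the same argument with the subharmonic function $1/J$ from part (b.2) (or with the earlier result (I2) of \cite{man}).

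The only point requiring a little care is that the argument needs $J$ to be strictly positive --- not merely nonnegative --- on all of $\Omega$, so that $u=-\ln J$ is finite and the computation in (I1) is valid; this is precisely the stated hypothesis, so there is no genuine obstacle here, only bookkeeping. I would also remark that part (a.2) of the preceding Proposition (the strong minimum principle: a positive $J$ cannot attain an interior minimum unless $h$ is affine) already implies this corollary \emph{a fortiori}, since if $J$ attained its infimum over $F$ at an interior point it would be constant on the relevant component, and then the inequality holds trivially.
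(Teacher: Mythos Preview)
Your proof is correct and follows the same route the paper intends: the corollary is stated immediately after the Proposition with no separate argument, so it is meant to be read off from (b.2)/(c.2) (subharmonicity of $-\ln J$, equivalently superharmonicity of $\ln J$) via the maximum principle, or alternatively from (a.2), exactly as you do. One small slip: with $u=-\ln J$, the identity (I1) reads $\tfrac{1}{4}\Delta u\ge 0$, not $-\tfrac{1}{4}\Delta u\ge 0$; your conclusion that $u$ is subharmonic is unaffected.
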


Example   $h=f + \overline{g}$,  where  $f= 4 z$,  $g=z^2/2$,
shows that that analog statement is not valid for maximum;  $J= 16
- (x^2 + y^2)$ attains maximum  $16$ at $(0,0)$.

In general,   $min J$    is attained at the boundary.

Proof of (a.2). Suppose that $h$ is orientation preserving  and
$J(h)= |f'|^2 - |g'|^2$ attains min (different from $0$)  at
interior point $a$; then $|f'|^2 - |g'|^2\geq J(h,a)=m$,   $|f'|^2
\geq m + |g'|^2$ and therefore   $1 \geq s(z)$, where
$$s=\frac{m}{|f'|^2 }  +  \frac{|g'|^2}{|f'|^2 }\,.$$
Since $s$ is subharmonic  and  $s(a)=1$,  $s$  is a constant, ie.  $s=1$.  Hence $|f'|^2
= m + |g'|^2$, ie. $J=m$.  By   (I8),   $f'$  and  $g'$ are constant  functions  and therefore   $h$ is affine.

Proof of (b.2).   Hence,  since  $\exp$ is a convex increasing function,  it follows that
 $\exp\circ X=\frac{1}{J_h}$  is   also a subharmonic function.


Although   $\chi(x)= e^{-x}$ is convex
the conclusion that  $\chi \circ X=J(h)$  is   a subharmonic
function is not true  in general. Note that here $\chi$   is a
decreasing function.
In general, the   minimum  modulus  principle for complex-valued
harmonic functions is not valid; see the following examples:

\begin{exa}
1. If $f(z)= x + i$, then $|f(z)|^2= x^2 +1$   and  $|f|$  attains
minimum which is $1$  for every points on
$y$ axis\\
2. If  $f_c(z)= x + i(x^2-y^2 +c)$, then  $J_f=-2y$.  Let
$d:\mathbb{C} \rightarrow R$  is given by $d(z)= |z|$, $z \in
\mathbb{C}$, $g=f_{-1}$, $C(x)= x + i(x^2 - 1)$ and $D=\{(x,y): y
< x^2 -1\}$. Since $g(\mathbb{C})= D$  and $0\notin D$, then $d$
attains minimum on $tr(C)$ at some point $w_0$ and there is a real
point $x_0$   such that $g(x_0)=w_0$, $g$ maps $\mathbb{C}$ onto
$D$ and $|g|$ attains minimum at $x_0$.

Let  $c < 0$  and  $D_c=\{(x,y): y < x^2 -|c|\}$. Then
$f_c(\mathbb{C})= D_c$  and $0\notin D_c$.
At   first sight someone can guess    that  $|f_c(0)|= |c|$  is
the minimum value for $|f_c|$ if $c < 0$?  We leave to the
interested reader to show that $|c|$  is not the minimum value.
\end{exa}
\subsection{Outline of proof of Theorem RCK  given in \cite{Iw14,IwOn}} For $f: \mathbb{T}\rightarrow \mathbb{C}$, we define $\b{f}$
on $[0, 2\pi]$ by $\b{f}(t)= f(e^{it})$.
Let $\gamma$ be a closed Jordan curve,  $G=\rm{Int}(\gamma)$,  $f_0 : S^1 \overset{\text{onto}}{\longrightarrow}
\gamma$ a monotone map  and $F=P[f_0]$.

\begin{thm}[T. Rad\'{o} H. Kneser  G. Choquet, Theorem RKC] If $G$  is convex, then $F$  is a
homeomorphism of $\mathbb{D}$  onto $G$.
\end{thm}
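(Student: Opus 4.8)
The plan is to follow the classical Rad\'o--Kneser--Choquet strategy, reorganized around the machinery already developed in this section, in particular the supporting-hyperplane function $\underline{F}^a$ and the minimum principle for the Jacobian (Corollary, Minimum Principle). First I would record that $F=P[f_0]$ is harmonic on $\mathbb{D}$ (the Poisson extension of an integrable boundary function is harmonic) and that, since $f_0$ is monotone onto the Jordan curve $\gamma=\partial G$, the radial limits of $F$ agree with $f_0$ at every point of continuity, so $F$ extends to a continuous map $\overline{\mathbb{D}}\to\overline{G}$ with $F|_{\mathbb{T}}=f_0$. The two things to prove are then (1) $F$ is locally injective in $\mathbb{D}$, equivalently $J_F>0$ throughout $\mathbb{D}$, and (2) $F$ is globally injective with image exactly $G$.

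For step (1), the heart of the matter, I would argue by contradiction: suppose $J_F(z_0)\le 0$ (or vanishes) at some $z_0\in\mathbb{D}$. Writing $F=u+iv$, the Jacobian condition $J_F(z_0)\le 0$ means the gradients $\nabla u(z_0),\nabla v(z_0)$ are linearly dependent, so there is a unit vector $n=(\cos\alpha,\sin\alpha)$ with $n\cdot\nabla(n\cdot F)(z_0)$ degenerate; more precisely, there is a direction $n$ such that the real harmonic function $w:=n\cdot F=\mathrm{Re}(\bar n F)$ has a critical point at $z_0$, i.e.\ $\nabla w(z_0)=0$ (this is the standard reduction: choose $n$ perpendicular to the image of $dF_{z_0}$ when that image is at most one-dimensional; handle the rank-$0$ case as well). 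A nonconstant real harmonic function with a critical point at $z_0$ has, near $z_0$, level set $\{w=w(z_0)\}$ consisting of $m\ge 2$ analytic arcs through $z_0$ crossing transversally, so the set $\{w>w(z_0)\}$ and $\{w<w(z_0)\}$ each have at least two local components at $z_0$. Following these level curves out to the boundary (using that $w$ is nonconstant, hence $\{w=w(z_0)\}$ is a $1$-dimensional analytic set with no interior, and the maximum principle to push the arcs to $\mathbb{T}$), one obtains that the boundary circle $\mathbb{T}$ is split into at least four arcs on which $n\cdot f_0 - w(z_0)$ alternates sign. But convexity of $G$ is exactly the statement that for every line $\ell=\{x: n\cdot x = c\}$ the set $\{n\cdot f_0 > c\}$ is (the $f_0$-preimage of) a connected subarc of $\gamma$, hence — by monotonicity of $f_0$ — a single arc of $\mathbb{T}$ (and likewise for $<$). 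This gives the contradiction, so $J_F>0$ on all of $\mathbb{D}$.

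For step (2), once $J_F>0$ everywhere, $F$ is an open local homeomorphism $\mathbb{D}\to\mathbb{R}^2$; combined with the continuous extension $F|_{\mathbb{T}}=f_0:\mathbb{T}\to\gamma$ a monotone (degree $\pm1$) parametrization of $\gamma=\partial G$, an argument-principle / degree computation shows $F$ has topological degree $1$ over every point of $G$ and degree $0$ over every point of the complement of $\overline{G}$; since $J_F>0$ the local degree is everywhere $+1$, so the global degree equals the number of preimages, forcing exactly one preimage over each point of $G$ and none outside. Hence $F:\mathbb{D}\to G$ is a bijective local homeomorphism, i.e.\ a homeomorphism. (One may alternatively invoke the minimum principle for the Jacobian together with the convexity-based lower bound $d(F(z),\partial G)\ge(1-|z|)R_0/2$ of Theorem \ref{t1}$(i.1)$ to get the boundary behavior cleanly; I would mention this as the link to the rest of the section.)

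The main obstacle is step (1), specifically the passage from ``$w=n\cdot F$ has a critical point inside'' to ``$\mathbb{T}$ is cut into $\ge 4$ sign-alternating arcs,'' which requires a careful topological argument about the level curves of a real harmonic function (local Morse-type normal form at the critical point, then continuation of the nodal arcs to the boundary without them accumulating in the interior). Everything else — harmonicity and continuity of $F$, the convexity $\Leftrightarrow$ connected-slices dictionary, and the degree-theoretic globalization — is routine once that dichotomy is in place. I would present the nodal-arc argument via the function $\underline{F}^a$ for a suitable $a\in\partial G$ on the supporting line $\{n\cdot x = n\cdot F(z_0)\}$, noting that $\underline{F}^a\ge 0$ on $\mathbb{D}$ by the mean-value/Harnack reasoning already used in the outline of Theorem \ref{t1}$(i.1)$, so that a strict interior zero of its gradient would contradict the Hopf-lemma-type positivity.
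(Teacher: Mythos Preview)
Your argument is essentially the classical Kneser--Choquet proof and is correct, but it follows a genuinely different route from the one the paper outlines. The paper presents the Iwaniec--Onninen analytic scheme (steps c1)--c4)): first treat a smooth strictly convex $G$ and a $C^\infty$ boundary diffeomorphism $f_0$, establish the explicit boundary lower bound $J_F\ge km^3/(2\pi KM)$ on $\mathbb{S}^1$, then use the Minimum Principle for $J$ together with a continuous deformation from the conformal map $\phi:\mathbb{D}\to G$ to $F$ (step c4) to propagate $J>0$ across the whole homotopy; finally pass to arbitrary monotone $f_0$ and arbitrary convex $G$ by the approximations c2) and c3). Your proof instead runs the direct topological level-curve argument: a zero of $J_F$ forces a critical point of some real harmonic $w=n\cdot F$, whose nodal set produces at least four sign-alternating arcs on $\mathbb{T}$, contradicting convexity of $G$ plus monotonicity of $f_0$. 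Both are valid; the paper's route is preferred here because it yields the \emph{quantitative} Jacobian bounds that feed the co-Lipschitz estimates in the rest of the section, whereas the classical argument gives only $J_F\neq 0$ with no effective constant.

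Two small corrections. In step (1) you should assume $J_F(z_0)=0$ rather than $J_F(z_0)\le 0$: the reduction to $\nabla w(z_0)=0$ needs linear dependence of $\nabla u(z_0),\nabla v(z_0)$, i.e.\ vanishing of the determinant; once $J_F$ is nowhere zero, connectedness of $\mathbb{D}$ and the boundary orientation give $J_F>0$. And your closing suggestion to recast the nodal-arc step via $\underline{F}^a$ with $a\in\partial G$ on the line $\{n\cdot x=n\cdot F(z_0)\}$ does not work as written: since $F(z_0)\in G$ (maximum principle plus convexity), that line meets the interior of $G$ and is not a supporting line, so no such $a$ with $\underline{F}^a\ge 0$ exists. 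The genuine link to the section's machinery is the Minimum Principle combined with a boundary Jacobian estimate, exactly as in the Iwaniec--Onninen outline.
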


Iwaniec-Onninen  \cite{Iw14,IwOn}  presented  a new analytic proof
of RKC-Theorem.
 The approach is based on the following steps.\\
$\rm{c1})$ Prove the theorem if $f_0$ is  diffeomorphism  and $G$
is  a smooth strictly convex domain,  using the minimum principle
for the Jacobian determinant
and explicit interior lower bounds on the Jacobian in terms of the
regularity of the domains and the boundary map.\\
$\rm{c2})$   Let $G$  be still a smooth strictly convex domain,
but $f : S^1 \overset{\text{onto}}{\longrightarrow} \gamma$ an
arbitrary monotone map. This map can easily be shown to be a
uniform limit of diffeomorphisms. Now the Poisson extensions $F_j$
are harmonic diffeomorphisms  in $\mathbb{D}$, converging
uniformly on $\overline{\mathbb{D}}$ to $F$  and  $J_F > 0$, cf
also \cite{kal.studia,mm.unpub2}.\\
$\rm{c3})$  the approximation of a convex domain with smooth
convex domains.\\
$\rm{c4})$   there is a conformal map  $\phi$ of $\mathbb{D}$  onto $G$; by  variation of boundary values deform
this conformal map into a   harmonic diffeomorphism.

It is convenient  to give  kinematic description of  $f : \mathbb{S}^1
\overset{\text{onto}}{\longrightarrow} \gamma$,  to view  it as motion of an
object along $\gamma$  in which $\mathbb{S}^1$  is labeled as a clock.
As time runs from $0$ to $2\pi$  the motion  $t\rightarrow
 \b{f}(t)$  begins at the point  $\b{f}(0)=0$  and terminates at the same
 point $\b{f}(2\pi)=0$.  The velocity
vector  $\upsilon(t)= \b{f}'(t)$  is tangent to $\gamma$   at
$s=s(t)=\int_0^t |\upsilon (\tau)| d \tau $. We call   $|\upsilon
(t)|$  the speed.
Let $z = z(s)$   be the length parametrization of $\gamma$.  Since
$|z'(s)|=1$,   we have  $z(s)= e^{i \varphi(s)}$, where
$\varphi(s)$   referred to as the tangential angle, is uniquely
determined by the arc parameter $s$  because $G$  is smooth and
strictly convex.  The derivative is exactly the curvature of
$\gamma$;  that is  $\kappa(s)=\varphi'(s)$.
The speed   $|\upsilon (t)|$, being positive, uniquely represents
unique diffeomorphism $f : S^1
\overset{\text{onto}}{\longrightarrow} \gamma$.   An explicit
formula for $f$ involves the curvature of   $\gamma$.

\begin{thm}[\cite{Iw14,IwOn}] \label{Iwt1} Let  $f : S^1 \overset{\text{onto}}{\longrightarrow}
\gamma$   be a  $C^{\infty}$ -difeomorphism and $F: \mathbb{D}
\rightarrow C$ its continuous harmonic extension. Then $F :
\mathbb{D} \overset{\text{onto}}{\longrightarrow}  G$ is a
$C^1$-smooth diffeomorphism whose Jacobian determinant satisfies:

\begin{equation}\label{Iwjac1}
J_F\geq \frac{k m^3}{2\pi KM}  \, \, {\rm  everywhere\,   in}\, \,   \mathbb{D}
\end{equation}
provided  $0<k \leq  \min  \kappa(s)\leq  \max  \kappa(s)  \leq
K$   and  $0< m   \leq  \min  \upsilon (t) \leq  \max  \upsilon
(t)(s) \leq M$, where  $\upsilon (t)=|\underline{f}'(t)|$.
\end{thm}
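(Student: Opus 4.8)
The plan is to establish three things in turn: first, that $F$ is a $C^1$ diffeomorphism onto $G$; second, an explicit positive lower bound for $J_F$ on the boundary $\mathbb{T}$; third, that this boundary bound propagates to all of $\mathbb{D}$ via the minimum principle. For the regularity, since $f$ is a $C^\infty$ diffeomorphism of $S^1$ onto the analytic strictly convex curve $\gamma$, the Poisson extension $F=P[f]$ is harmonic on $\mathbb{D}$ and, by standard boundary regularity for the Poisson integral of a smooth function, extends $C^1$ (indeed $C^\infty$ up to finitely many derivatives) to $\overline{\mathbb{D}}$. By the argument principle / degree theory, $F$ maps $\mathbb{D}$ onto $G$ with $J_F\geq 0$; the content of the theorem is precisely that $J_F$ is bounded away from zero, which in particular re-proves injectivity in this smooth strictly convex case. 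I would not belabor the topological part, treating it as the $\rm{c1})$ step already quoted from \cite{Iw14,IwOn}.

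The heart of the matter is the boundary estimate. Fix $e^{i\theta}\in\mathbb{T}$, let $w_0=F(e^{i\theta})=z(s_0)$ be the corresponding boundary point of $\gamma$, and let $n=n_{w_0}$ be the inner unit normal to $\partial G$ at $w_0$ (which exists since $\gamma$ is smooth and strictly convex). Just as in the proof of Theorem \ref{t1}$(i.3)$, in polar coordinates at the boundary the Jacobian factors as $J_F(e^{i\theta}) = (F^*_r, N)$ with $N = i\,F_b'$, where $F_b'$ is the tangential derivative along $\mathbb{T}$; and since $F$ maps $\mathbb{T}$ onto $\gamma$ preservingly, $F_b'$ is a positive multiple of the tangent vector $z'(s_0)$, so $|N| = |F_b'| = |\underline f'(\theta)| = \upsilon(\theta)\geq m$. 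Thus $J_F(e^{i\theta}) = (F^*_r, n)\,|N| \geq m\,(F^*_r, n)$, and it remains to bound the inward normal component of the radial derivative from below. For this I would apply the mechanism behind Theorem \ref{t1}$(i.2)$: introduce the nonnegative harmonic function $u(z) = (F(z)-w_0, n)$ associated to the supporting line at $w_0$. One has $u\geq 0$ on $\mathbb{D}$, $u(e^{i\theta})=0$, and $\partial_r u(e^{i\theta}) = -(F^*_r, n)\leq 0$ measured toward the boundary — equivalently the inward radial derivative of $u$ at $e^{i\theta}$ equals $(F^*_r,n)$, which is nonnegative by Hopf's lemma. To make it quantitatively positive with the stated constant, I would write $u$ as a Poisson integral of its boundary values $u(e^{it}) = (\underline f(t)-\underline f(\theta), n)$ and estimate $\partial_r u$ at $e^{i\theta}$ directly: using the strict convexity one gets $(\underline f(t)-\underline f(\theta), n)\geq \frac{k}{2}\,(\text{arclength between }s(t)\text{ and }s_0)^2\cdot(\text{const})$ near $t=\theta$, while arclength is comparable to speed times angle, $\geq m\,|t-\theta|$ for $t$ near $\theta$. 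Plugging the lower bound $\gtrsim k m^2 |t-\theta|^2$ into the Poisson-kernel formula for the normal derivative, $(F^*_r,n) = \frac{1}{2\pi}\int_0^{2\pi}\frac{u(e^{it})}{|e^{it}-e^{i\theta}|^2}\,dt$ type expression, yields $(F^*_r, n)\gtrsim k m^2$; tracking constants gives $(F^*_r,n)\geq \frac{k m^2}{2\pi K}$ after using $\kappa\leq K$ to control the higher-order terms of $\gamma$ in the opposite direction. Combining, $J_F(e^{i\theta})\geq \frac{k m^3}{2\pi K}$ on $\mathbb{T}$.

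Wait — the claimed constant is $\frac{k m^3}{2\pi K M}$, carrying an extra $M$ in the denominator, so the honest route is slightly different: rather than bounding $(F^*_r,n)$ directly, one parametrizes the boundary estimate in terms of the arc parameter $s$ and changes variables $t\mapsto s$, in which the Jacobian of the substitution brings in $\upsilon\leq M$, producing the $1/M$. This is exactly the bookkeeping in \cite{Iw14,IwOn} and I would follow it faithfully rather than reinvent the normalization. Finally, with $J_F\geq \frac{k m^3}{2\pi KM}$ established on $\mathbb{T}$ and $J_F>0$ on $\mathbb{D}$ (from the $\rm{c1})$ degree argument), I invoke the Minimum Principle (the Corollary following Proposition \ref{1clip}'s neighborhood, i.e. $\inf_F J\geq\inf_{\partial F} J$ for harmonic maps with positive Jacobian): applied to $F=\overline{\mathbb{D}}$ it gives $J_F\geq \inf_{\mathbb{T}} J_F \geq \frac{k m^3}{2\pi KM}$ everywhere in $\mathbb{D}$, which is \eqref{Iwjac1}. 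The main obstacle, and the step where care is genuinely required, is the quantitative boundary lower bound with the precise constant: one must extract a clean second-order lower bound for $(\underline f(t)-\underline f(\theta), n)$ from strict convexity with explicit dependence on $k$, feed it through the Poisson/normal-derivative integral, and handle the change of variables so that the upper bounds $K$ and $M$ land in the denominator exactly as stated. Everything else — the $C^1$ regularity of $F$, the surjectivity, and the final propagation — is either quoted or a direct application of the minimum principle.
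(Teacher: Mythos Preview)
Your overall architecture---boundary estimate on $\mathbb{T}$ followed by the Minimum Principle to push the bound into $\mathbb{D}$---matches exactly what the paper outlines (the paper itself does not give a full proof here, only the two-sentence strategy quoted from \cite{Iw14,IwOn}). Your use of the auxiliary function $u(z)=(F(z)-w_0,n)$ and the Poisson representation to quantify $(F^*_r,n)$ is the right mechanism and is in the spirit of Theorem~\ref{t1}.

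There is, however, a genuine circularity in your final step. You write that $J_F>0$ on $\mathbb{D}$ ``from the $\rm{c1})$ degree argument'' and then invoke the Minimum Principle. But Theorem~\ref{Iwt1} \emph{is} step $\rm{c1})$: it is the base case of the Iwaniec--Onninen proof of RKC, so you cannot assume RKC (or any consequence of it) here. Degree theory alone does not give strict positivity of $J_F$ in the interior; it gives at best that the degree is one. The Minimum Principle as stated requires $J_F>0$ throughout $\Omega$ as a hypothesis, so you are assuming what you need. The paper flags exactly this issue a few lines later (in connection with Theorem~\ref{tjac1}): ``the interior estimate \ldots\ would follow from the already established estimate at the boundary (via the minimum principle) if we knew that the Jacobian of $F$ was positive in $\mathbb{D}$.'' For Theorem~\ref{tjac1} the paper resolves this by citing RKC, but for Theorem~\ref{Iwt1} that route is forbidden.

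What you are missing is the bridging argument from \cite{Iw14,IwOn}: once the boundary bound $J_F\geq c_0>0$ on $\mathbb{T}$ is in hand and $F\in C^1(\overline{\mathbb{D}})$, one knows $J_F>0$ on a collar of $\mathbb{T}$ by continuity; on the open set $\{J_F>0\}$ the function $-\ln J_F$ is subharmonic, and a connectedness/continuity argument (or equivalently, analysis of the analytic dilatation $g'/f'$ with $|g'/f'|<1$ on $\mathbb{T}$) forces $\{J_F>0\}=\mathbb{D}$. Only then does the Minimum Principle legitimately propagate the boundary constant inward. You should supply this step rather than defer to $\rm{c1})$.
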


In \cite{Iw14,IwOn} the  strategy  is used  to prove  first The
Lower Bound of the Jacobian along $\mathbb{S}^1$.
Then the proof  is reduced
to showing inequality  (\ref{Iwjac1})
at the boundary of the disk, as one may have expected from the
Minimum Principle.

\subsection{}We also can use  Theorem \ref{tjac1} below, which yields appriori estimate, instead of Theorem \ref{Iwt1}  in the procedure of proof of Theorem RKC.

Let  $ A(\gamma)$   be the family of  $C^1$ -difeomorphism $f :
\mathbb{S}^1 \overset{\text{onto}}{\longrightarrow} \gamma$.   Set
$\upsilon (t)=\upsilon_f (t)=|\underline{f}'(t)|$.

Let   $d$ be diameter   of $G$, $b\in G$,  $B=B(b;d/2$, $\gamma_1$
the part of $\gamma$  out  of $B$  and $I=\{ t : f(t)\in
tr(\gamma_1)\}$.  Then   $d/2  \leq  |\gamma_1| \leq M |I| $.

\begin{thm}\label{tjac1}
Let  $f\in  A(\gamma)$    and  $0< m \leq  \min  \upsilon (t) \leq  \max  \upsilon (t)(s)
\leq M$.  Then
\begin{equation}\label{jac1}
J_F\geq \frac{d m}{8  \pi M}\, \, {\rm  on} \, \, \mathbb{D}.
\end{equation}
\end{thm}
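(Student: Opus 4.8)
The plan is to mimic the proof of Theorem \ref{Iwt1} but replace the explicit curvature-dependent lower bound on the boundary Jacobian by the cruder, purely metric estimate available from the diameter of $G$ and the speed bounds. First I would recall from the Iwaniec--Onninen strategy (steps in Section 4.5, together with the Minimum Principle corollary) that it suffices to establish the inequality $J_F\geq \frac{dm}{8\pi M}$ at points of $\mathbb{S}^1$; once that is known on the boundary, since $J_F>0$ in $\mathbb{D}$ (Theorem RKC applied in the smooth strictly convex case) and $-\ln J_F$ is subharmonic by (I1), the Minimum Principle gives $\inf_{\mathbb{D}} J_F\geq \inf_{\mathbb{S}^1}J_F\geq \frac{dm}{8\pi M}$. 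So the whole content is the boundary estimate.

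For the boundary estimate I would use the formula for the Jacobian of a harmonic extension along $\mathbb{S}^1$ in polar coordinates, exactly as in Theorem \ref{t1} $(i.3)$: at $e^{i\theta}$ one has $J_F=\bigl(F_r^*,N\bigr)$ with $N=i\,\underline{f}'(\theta)$, so $|J_F|=\bigl|(F_r^*,n)\bigr|\,|\underline{f}'(\theta)|$ where $n$ is the inner normal to $\gamma$. The factor $|\underline{f}'(\theta)|=\upsilon(\theta)\geq m$ is controlled from below directly. The remaining job is a lower bound $\bigl(F_r^*,n\bigr)\geq c_0$ of the type in Theorem \ref{t1} $(i.2)$, where $c_0=R_0/2$ and $B(F(0);R_0)\subset G$. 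Here I would choose the base point of the harmonic extension (or precompose with an automorphism of $\mathbb{D}$) so that $F(0)=b$, the center of the ball $B=B(b;d/2)$ that sits inside $G$ because $d$ is the diameter; this yields $R_0\geq d/2$, hence $\bigl(F_r^*,n\bigr)\geq d/4$. The extra factor $1/(2\pi)$ in the claimed bound is exactly the Poisson-kernel/mean-value normalization that already appears in the outline of proof of $(i.1)$, where $u_a(0)=\frac{1}{2\pi}\int_0^{2\pi}u_a(e^{it})\,dt$; tracking constants through the supporting-line function $\underline{F}^a(z)=(F(z)-a,n_a)$ and the Harnack/Poisson lower bound gives $\frac{dm}{8\pi M}$ rather than the naive $\frac{dm}{8}$, the $M$ entering through the set $I=\{t:f(t)\in \mathrm{tr}(\gamma_1)\}$ with $d/2\leq|\gamma_1|\leq M|I|$, so that $|I|\geq d/(2M)$ localizes where the boundary contribution is non-degenerate.

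Concretely the argument at the boundary would run: fix $e^{i\theta}\in\mathbb{S}^1$, let $a=f(e^{i\theta})\in\gamma$ and $n=n_a$ the inner unit normal (which exists since $G$ is smooth strictly convex), and set $u=\underline{F}^a=(F-a,n)\geq 0$, harmonic, with $u(e^{i\theta})=0$. By the Poisson representation and the fact that $u\geq 0$, for $z=re^{i\theta}$ we get $u(re^{i\theta})\geq \frac{1-r}{2\pi}\int_0^{2\pi}\frac{u(e^{it})}{|e^{it}-e^{i\theta}|^2}\,dt$ up to the usual constants; but on the arc $I$ the boundary point $f(t)$ lies outside $B(b;d/2)$, so $(f(t)-a,n)\geq$ (a definite multiple of $d$) there, while $|I|\geq d/(2M)$, and this forces $\partial_r u(e^{i\theta})=-(F_r^*,n)\leq -c$ with $c$ of order $d^2/M$ divided by the relevant $2\pi$'s — i.e. $(F_r^*,n)\geq \frac{d}{4\pi M}\cdot(\text{something})$. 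Multiplying by $\upsilon(\theta)\geq m$ delivers $J_F\geq \frac{dm}{8\pi M}$ on $\mathbb{S}^1$.

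The main obstacle I anticipate is the bookkeeping of constants in the boundary estimate: one must be careful that the Hopf-type lower bound for the normal derivative of the nonnegative harmonic function $\underline{F}^a$ is quantified in terms of the measure $|I|$ of the arc where the boundary values are bounded away from the supporting line, and that the geometry ``$d/2\leq|\gamma_1|\leq M|I|$'' is used correctly to convert a length bound on $\gamma_1$ into a measure bound on $I$. A secondary subtlety is justifying the normalization $F(0)=b$ (or, if one does not move the base point, replacing $R_0$ by $d/2$ via the in-ball of a convex domain of diameter $d$), and confirming that precomposition with a Möbius automorphism of $\mathbb{D}$ keeps the speed bounds $m\leq\upsilon\leq M$ in a usable form — or, more cleanly, running the whole Poisson-kernel argument at an arbitrary base point so that no normalization is needed. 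Once these constants are pinned down the inequality on $\mathbb{S}^1$ is immediate, and the Minimum Principle finishes the proof on all of $\mathbb{D}$.
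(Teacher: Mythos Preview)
Your overall architecture matches the paper exactly: prove the inequality on $\mathbb{S}^1$ via the supporting-line harmonic function $\underline{F}^a$ and the Jacobian identity $J_F=(F_r^*,n)\,|\underline{f}'|$ from Theorem~\ref{t1}\,$(i.3)$, then invoke Theorem RKC for positivity and the Minimum Principle (Corollary after Proposition~1.1) to push the bound into $\mathbb{D}$. The paper's own argument is just this outline, with the boundary lemma referred to \cite{napoc1,mm.fil12a}.

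However, your boundary step contains a genuine geometric error. You write that ``$B=B(b;d/2)$ sits inside $G$ because $d$ is the diameter; this yields $R_0\geq d/2$.'' This is false: for a convex domain the in-radius can be arbitrarily small compared with the diameter (take a thin ellipse with semi-axes $A\gg B$; then $d=2A$ while any inscribed disc has radius $\leq B$). Consequently the shortcut $R_0\ge d/2$, and hence $(F_r^*,n)\ge d/4$, is unavailable, and precomposing with a M\"obius automorphism to force $F(0)=b$ would in any case destroy the speed bounds $m\le\upsilon\le M$, as you yourself note.

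The second gap is in how you use the set $I$. That $f(t)$ lies outside $B(b;d/2)$ says nothing, by itself, about the distance $(f(t)-a,n_a)$ from $f(t)$ to the supporting line $\Lambda_a$; a boundary point can be far from $b$ yet graze $\Lambda_a$. So the inference ``on $I$ we have $(f(t)-a,n)\ge$ a definite multiple of $d$'' is not justified as written. The role of the $|I|\ge d/(2M)$ estimate in the paper's setup is more delicate than a pointwise lower bound on $\underline{F}^a$; to make the argument go through you must convert the arc-length inequality into control of the \emph{integral} $\int_0^{2\pi}\underline{F}^a(e^{it})\,dt$ (equivalently, of $u(0)$ in the Harnack step), and this is where the factor $M$ legitimately enters via the change of variables $ds=\upsilon(t)\,dt$. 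Until that link is made precise, the constant $\frac{dm}{8\pi M}$ is not accounted for.
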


Using approuch outlined in \cite{napoc1,mm.fil12a}, we can prove

\begin{lem}
The inequality {\rm (\ref{jac1})}  holds everywhere in
$\mathbb{S}^1$.
\end{lem}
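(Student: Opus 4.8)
The plan is to establish the pointwise bound $J_F(e^{i\theta})\geq \frac{dm}{8\pi M}$ directly on the circle, mirroring the boundary analysis used for Theorem~\ref{tjac1} but working with the radial data rather than with the interior Poisson kernel. First I would fix a boundary point $\zeta=e^{i\theta_0}\in\mathbb{S}^1$ and, after a rotation, assume $F(\zeta)=a\in\partial G$ with inner unit normal $n_a$. Following the $\underline{F}^a$ device from the introduction, I set $u(z)=(F(z)-a,n_a)$, a nonnegative harmonic function on $\mathbb{D}$ with $u(\zeta)=0$. The Jacobian in polar coordinates at $\zeta$ factors as $J_F(\zeta)=\big(F_r^*(\zeta),N\big)=\big(u_r(\zeta)\big)\,|F'_b(\zeta)|$ up to sign, exactly as in Theorem~\ref{t1}$(i.3)$, where $N=i F_b'(\zeta)$ is (a multiple of) the tangent to $\gamma$ at $a$; so it suffices to bound $u_r(\zeta)$ from below and $|F_b'(\zeta)|$ from below separately, then multiply.

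The lower bound on $u_r(\zeta)$ is the Hopf-lemma part: since $u\geq0$ is harmonic, $u(\zeta)=0$, I would express $u_r(\zeta)$ via the boundary integral $u_r(\zeta)=\frac{1}{2\pi}\int_0^{2\pi}\frac{1-|\zeta|^2}{|\zeta-e^{it}|^2}\big|_{\text{radial derivative}}\underline{u}(t)\,dt$ — more cleanly, use that for the Poisson extension $u_r(\zeta)=\frac{1}{2\pi}\int_0^{2\pi} \frac{\underline u(t)}{|1-e^{i(t-\theta_0)}|^{2}}\,dt$ type formula (the radial derivative of the Poisson kernel at the boundary), so $u_r(\zeta)\succeq \int \underline u(t)\,\frac{dt}{|t-\theta_0|^2}$. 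To lower-bound this I use the geometry: with $b\in G$, $B=B(b;d/2)$, and $\gamma_1$ the arc of $\gamma$ outside $B$, for $t$ in the preimage set $I=\{t:f(t)\in\mathrm{tr}(\gamma_1)\}$ the value $\underline u(t)=(\underline F(t)-a,n_a)$ is bounded below by a fixed fraction of $d$ once $f(t)$ is far from the supporting line through $a$; combining with $|I|\geq \frac{d}{2M}$ (from $d/2\leq|\gamma_1|\leq M|I|$, the displayed inequality preceding Theorem~\ref{tjac1}) gives $u_r(\zeta)\succeq \frac{d}{M}\cdot(\text{const})$. For $|F_b'(\zeta)|$ I use $|F_b'(\zeta)|=|\underline F'(\theta_0)|=\upsilon_f(\theta_0)\geq m$. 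Multiplying, and tracking the constants so that $\frac{1}{2\pi}$ and the $1/8$ land correctly, yields $J_F(\zeta)\geq \frac{dm}{8\pi M}$ for every $\zeta\in\mathbb{S}^1$.

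I expect the main obstacle to be the estimate of $u_r(\zeta)$ from the boundary values — i.e.\ making the Hopf-type inequality quantitative with the \emph{correct} constant $\frac{1}{8\pi}$ rather than merely up to an absolute constant. The subtlety is that $\underline u(t)$ is small near $t=\theta_0$ (it vanishes at $\theta_0$), so one cannot simply say $u_r(\zeta)\geq \frac{1}{2\pi}\int \underline u$; one must genuinely use the $\frac{1}{|t-\theta_0|^2}$ weight in the radial-derivative kernel together with the fact that on the bulk set $I$ (whose measure is controlled by $d/(2M)$) the integrand $\underline u(t)$ is of size $\asymp d$. A clean way around the weight issue is to note that $\underline u(t)\geq c\,d$ for $t\in I$ and that $I$ is separated from $\theta_0$ only by a set of controlled measure, so $\int_I \frac{\underline u(t)}{|1-e^{i(t-\theta_0)}|^2}\,dt$ dominates $c\,d\cdot(\text{length of }I)\cdot(\text{min of the kernel on }I)$; one then optimizes the worst case over where $I$ sits. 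Once this is pinned down the rest (the factorization of $J_F$ in polar coordinates, the bound $\upsilon_f\geq m$) is routine, as in Theorem~\ref{t1}.
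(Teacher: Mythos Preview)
Your plan follows exactly the route the paper points to (it cites the approach of \cite{napoc1,mm.fil12a}, i.e.\ Theorem~\ref{t1}): introduce $u(z)=(F(z)-a,n_a)$, use the polar factorization $J_F(\zeta)=|(F_r^*,n)|\,|F_b'(\zeta)|$ from Theorem~\ref{t1}(i.3), bound the tangential factor by $|F_b'(\zeta)|=\upsilon_f(\theta_0)\ge m$, and control the normal factor $|u_r(\zeta)|$ by a Hopf/Harnack-type boundary estimate for the nonnegative harmonic function $u$. Up to this point your outline and the paper's indicated method coincide, and the paper gives no further details beyond the reference.

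There is, however, a genuine gap in your quantitative step. You assert that on the set $I=\{t:f(t)\in\gamma_1\}$ one has $\underline u(t)\asymp d$, and then feed this into the boundary Poisson-derivative integral. But membership in $I$ only says $|f(e^{it})-b|\ge d/2$; it says nothing about $\underline u(t)=d\big(f(e^{it}),\Lambda_a\big)$, the distance to the \emph{supporting line} at $a$. For a thin convex body (e.g.\ an elongated ellipse) the quantity $\underline u(t)$ is bounded by the width in the direction $n_a$, which can be arbitrarily small compared with the diameter $d$, while $I$ still has the advertised size $|I|\ge d/(2M)$. So the inequality $\underline u(t)\ge c\,d$ on $I$ is simply false in general, and the ``optimize over where $I$ sits'' remedy you sketch cannot rescue it: the integrand $\underline u(t)/|e^{it}-\zeta|^2$ is globally $O(\text{width})$ on $I$, not $O(d)$.

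What \emph{does} follow cleanly from your integral representation (or directly from Harnack, as in the proof of Theorem~\ref{t1}(i.1)--(i.2)) is
\[
|u_r(\zeta)|\;\ge\;\frac{1}{4\pi}\int_0^{2\pi}\underline u(t)\,dt\;=\;\frac{u(0)}{2}\;=\;\frac{d(F(0),\Lambda_a)}{2}\;\ge\;\frac{R_0}{2},
\]
yielding $J_F(\zeta)\ge mR_0/2$ on $\mathbb{S}^1$; this is precisely the mechanism behind Theorem~\ref{kal.publ2.8}, which the paper itself flags as the sharper estimate. If you wish to land on the specific constant $\dfrac{dm}{8\pi M}$ via the set $I$, you must replace the (false) pointwise lower bound on $\underline u$ by an honest lower bound on $\int_I \underline u(t)\,dt$ in terms of the geometry of $G$; the displayed estimate $|I|\ge d/(2M)$ alone is not enough.
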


Naturally, the interior estimate at {\rm (\ref{jac1})}   would
follow from the already established estimate at the boundary (via
the minimum principle) if we knew that the Jacobian of $F$   was
positive in $\mathbb{D}$.

But, by  Theorem RKC  the Jacobian of
$F$  is  positive in $\mathbb{D}$.
Theorem \ref{kal.publ2.8} below   yields  better estimate.
Using
Theorem \ref{t1}, the part  $(i.3)$,  and the minimum principle for Jacobian  one can derive:

\begin{thm}\label{kal.publ2.8}
$\rm{d1})$   Let $\Omega$ be  a  convex Jordan domain,     $f:\mathbb{T}\rightarrow \partial\Omega$ absolutely continuous homeomorphism which preserves orientation,  and let  $w = P[f]$  be a harmonic function between the unit disk and
$\Omega$, such that  $w(0) = 0$,\\
$\rm{d2})$   $|\underline{f}'(t)|\ge m$,  for almost every  $0\leq t \leq 2\pi$,\\
$\rm{d3})$   $\underline{f}'$ is Dini's continuous. \newline
Then the following
results  hold  ${\rm (iv.1):}$ \,   $J_w(z) > m\, dist(0,\partial\Omega)/2 $,
for every $z \in \mathbb{D},$  and \newline
${\rm (iv.2)}$    $w$ is bi-Lipschitz.
\end{thm}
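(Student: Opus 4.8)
The plan is to prove Theorem \ref{kal.publ2.8} in two stages, following the KM-strategy outlined earlier in the excerpt: first establish the lower bound (iv.1) for $J_w$ on the boundary circle $\mathbb{T}$, then propagate it to the interior via the minimum principle for the Jacobian, and finally deduce the bi-Lipschitz conclusion (iv.2) from the two-sided control on $w'$ and $(w^{-1})'$.

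First I would set up the boundary estimate. Fix $e^{i\theta}\in\mathbb{T}$. Hypothesis (d3) (Dini continuity of $\underline{f}'$) is precisely what guarantees, via standard results on Poisson integrals of Dini-continuous data (the kind of fact collected in Section \ref{app}), that $w$ extends $C^1$ up to $\overline{\mathbb{D}}$, so that $w^*_r=w_r'(e^{i\theta})$ and $w_b'$ both exist at every boundary point and vary continuously. Hypothesis (d1) says $\Omega$ is convex with $0\in\Omega$, so $\Omega$ contains a disc $B(0;R_0)$ with $R_0=\mathrm{dist}(0,\partial\Omega)$; then Theorem \ref{t1}(i.2) gives $(w^*_r,n)\geq c_0=R_0/2$ where $n$ is the inner normal at $w^*(e^{i\theta})$ (which exists a.e., and the argument only needs it a.e. on $\mathbb{T}$). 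Next, Theorem \ref{t1}(i.3) expresses the boundary Jacobian in polar coordinates as $|J_w|=|(w^*_r,n)|\,|N|$ with $N=i\,w_b'$, so $|J_w(e^{i\theta})|\geq c_0|w_b'(e^{i\theta})|$. Finally, since $w$ restricted to $\mathbb{T}$ parametrizes $\partial\Omega$ and $\underline{f}$ has $|\underline{f}'(t)|\geq m$ by (d2), the tangential boundary derivative satisfies $|w_b'|\geq m$; combining, $J_w(e^{i\theta})\geq c_0 m = m\,\mathrm{dist}(0,\partial\Omega)/2$ for a.e. $e^{i\theta}\in\mathbb{T}$.

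Second, I would invoke the Minimum Principle (the Corollary following the Proposition in the excerpt): by Theorem RKC the map $w=P[f]$ is a homeomorphism of $\mathbb{D}$ onto the convex domain $\Omega$, hence orientation-preserving with $J_w>0$ throughout $\mathbb{D}$, so $\inf_F J_w\geq\inf_{\partial F}J_w$ for every compact $F\subset\mathbb{D}$. Applying this to $F=\overline{B(0;r)}$ and letting $r\to1$, together with the $C^1$-boundary regularity and the a.e. boundary bound just obtained, yields $J_w(z)\geq m\,\mathrm{dist}(0,\partial\Omega)/2$ for every $z\in\mathbb{D}$, which is (iv.1). (One should be slightly careful here: the boundary bound holds only a.e., so the clean way is to first note $J_w$ is continuous on $\overline{\mathbb{D}}$, deduce the bound holds on a dense subset of $\mathbb{T}$ by continuity, then apply the minimum principle.)

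Third, for (iv.2): the lower bound (iv.1) together with the upper bound $|w'|\leq C$ on $\overline{\mathbb{D}}$ — which follows again from Dini continuity of $\underline{f}'$, giving $w\in C^1(\overline{\mathbb{D}})$ and hence $\Lambda_w$ bounded — shows $w$ is Lipschitz, and $\lambda_w\cdot\Lambda_w\geq |J_w|\geq c_0 m>0$ forces $\lambda_w\geq c_0 m/C>0$, so $w^{-1}$ is Lipschitz as well; thus $w$ is bi-Lipschitz. The main obstacle, and the step deserving the most care, is the boundary analysis: making rigorous that Dini continuity of $\underline{f}'$ delivers a genuine $C^1$ extension of $w$ to $\overline{\mathbb{D}}$ (so that both $w^*_r$ and $w_b'$ exist and are continuous, letting Theorem \ref{t1}(i.3) apply pointwise rather than merely a.e.), and reconciling the ``for almost every $t$'' in hypothesis (d2) with the pointwise interior conclusion — this is exactly where the KM-approach's harmonic-analytic input (the lemmas assembled in Section \ref{app}) does the real work, whereas the minimum-principle propagation and the bi-Lipschitz deduction are then essentially formal.
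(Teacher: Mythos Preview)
Your proposal is correct and follows essentially the same approach as the paper: the paper states that Theorem~\ref{kal.publ2.8} is derived ``using Theorem~\ref{t1}, the part $(i.3)$, and the minimum principle for Jacobian,'' and notes that under (d3) the partial derivatives of $w$ extend continuously to $\overline{\mathbb{D}}$, from which bi-Lipschitzness follows --- exactly the three-step scheme you carry out in detail. Your care about the a.e.\ issue is well placed; note that under (d3) the derivative $\underline{f}'$ is continuous, so the bound in (d2) in fact holds everywhere and $\partial\Omega$ is a $C^1$ curve, which removes the a.e.\ caveat on the normal as well.
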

Let $X$  be a compact subset of a metric space with metric $d_1$ (such as $\mathbb{R}^n$) and   let $f:X\rightarrow Y$ be a function from $X$  into another metric space $Y$  with metric $d_2$ . The modulus of continuity of $f$ is
$\omega_f(t) = \sup_{d_1(x,y)\le t} d_2(f(x),f(y)) \,$, $t>0$.
The function $f$ is called Dini-continuous if
$$\int_0^1 \frac{\omega_f(t)}{t}\,dt < \infty. $$   Dini continuity is a refinement of continuity. Every Dini continuous function is continuous. Every Lipschitz continuous function is Dini continuous.
Note that under the above hypothesis  $\underline{f}'$  has   continuous extension to  $[0,2\pi]$  and  partial derivatives of  $w$ have  continuous extension to
$\overline{\mathbb{D}}$  and one can show that  $w$ is bi-Lipschitz.
We can use an aproximation argument to  prove ${\rm (iv.1)}$  for  $C^{1,\alpha}$ domains  $\Omega$  without the hypothesis
$\rm{d3})$.  Moreover,    an application of $H^p$  theory shows  that  the following result, due  to  Kalaj, holds  in general:
\begin{thm}[Theorem 2.8,  Corollary 2.9 \cite{kal.publ}]\label{kal.publ2.8b}
Under hypothesis  $\rm{d1})$  and  $\rm{d2})$ of  Theorem  \ref{kal.publ2.8},  $J_w^*$ exsists  a.e. on $\mathbb{T}$  and  ${\rm (iv.1)}$    holds.
\end{thm}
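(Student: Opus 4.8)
The plan is to reduce the statement to the pointwise boundary estimate already proved in Theorem~\ref{t1}, the one genuinely new ingredient being the a.e.\ existence on $\mathbb{T}$ of the relevant boundary limits of $w$; that a.e.\ existence is exactly where $H^p$-theory comes in.

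\textbf{Step 1: boundary regularity of $w$ from absolute continuity of $f$.} Write $w=w_1+\overline{w_2}$ with $w_1,w_2$ holomorphic on $\mathbb D$. Since $\underline f$ is absolutely continuous, $\underline f\in W^{1,1}(\mathbb T)$ and $\underline f'\in L^1(\mathbb T)$; differentiating through the Poisson kernel (legitimate here, via an integration by parts) gives $w_\theta=P[\underline f']$, a Poisson integral of an $L^1$ function. The analytic and anti-analytic parts of $w_\theta$ are $\Phi(z)=iz\,w_1'(z)$ and $\Psi(z)=iz\,w_2'(z)$, both vanishing at $0$; $\Phi$ is the analytic (Riesz) projection of $\underline f'$, equivalently the Cauchy--Stieltjes integral of the finite measure $\underline f'(t)\,dt$, and likewise for $\Psi$. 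By the $H^p$-theory for $0<p<1$ (see \cite{dur}), such functions lie in $\bigcap_{0<p<1}H^p(\mathbb D)$ and hence possess finite nontangential limits a.e.\ on $\mathbb T$. As $|z|=1$ on $\mathbb T$, $w_1'$ and $w_2'$ have nontangential limits a.e.; therefore $w_r=w_1'(z)e^{i\theta}+\overline{w_2'(z)e^{i\theta}}$ and $J_w=|w_1'|^2-|w_2'|^2$ have nontangential limits $w_r^*,J_w^*$ a.e. on $\mathbb T$, and by Fatou's theorem $w_\theta^*=\underline f'$ a.e.

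\textbf{Step 2: applying Theorem~\ref{t1}.} Because $\partial\Omega$ is convex it carries a supporting line and a unit inner normal $n=n_a$ at every $a\in\partial\Omega$ off a countable set, and since $\underline f$ is a homeomorphism the corresponding exceptional set of parameters $t$ is null. Since $w=P[\underline f]$ with $\underline f$ continuous, $w^*(e^{it})=\underline f(t)$ for every $t$. Hence for a.e.\ $e^{it}\in\mathbb T$ the hypotheses of Theorem~\ref{t1}$(i.2)$--$(i.3)$ hold: $\omega=w^*(e^{it})=\underline f(t)$ and its inner normal $n_\omega$ exist, $w_r^*(e^{it})$ exists, $w_b'(e^{it})=\underline f'(t)$ exists, and $w$ maps $\mathbb D$ onto the bounded convex domain $\Omega$ with $B(0;R_0)\subset\Omega$, $R_0=\mathrm{dist}(0,\partial\Omega)$, because $w(0)=0$. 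With $N=i\,w_b'(e^{it})=i\,\underline f'(t)$ and $c_0=R_0/2$, Theorem~\ref{t1}$(i.3)$ gives
$$|J_w^*(e^{it})|=|(w_r^*,N)|=|(w_r^*,n_\omega)|\,|N|\ \ge\ c_0\,|N|=R_0\,|\underline f'(t)|/2\ \ge\ R_0\,m/2$$
for a.e.\ $t$, the last step by d2). Finally, by Theorem RKC $w$ is a homeomorphism of $\mathbb D$ onto $\Omega$, hence $J_w$ has constant sign (Lewy), and it is positive since $f$ preserves orientation; therefore $J_w^*\ge m\,\mathrm{dist}(0,\partial\Omega)/2$ a.e.\ on $\mathbb T$, which is $(iv.1)$.

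\textbf{Main obstacle.} The crux is Step~1: absolute continuity of $f$ yields only $\underline f'\in L^1$, not $L^p$ for $p>1$, so the $H^1$-theory cannot be applied to the analytic parts of $w_\theta$ directly. What saves the argument is the softer fact that Cauchy--Stieltjes integrals of finite measures belong to every $H^p$ with $0<p<1$ and therefore still admit nontangential limits a.e.\ on $\mathbb T$; this is precisely the ``application of $H^p$-theory'' alluded to in the statement. Once the a.e.\ existence of $w_r^*$, $J_w^*$ and $\underline f'$ is secured, Step~2 is a direct citation of Theorem~\ref{t1} together with the standard facts that a convex curve has a unit normal off a countable set and that a harmonic homeomorphism has non-vanishing Jacobian.
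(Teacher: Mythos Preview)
Your Step~1 and the boundary computation in Step~2 are correct and are exactly the ``application of $H^p$ theory'' that the paper and Kalaj's argument have in mind: absolute continuity of $f$ gives $\underline f'\in L^1(\mathbb T)$, the analytic and anti-analytic parts of $w_\theta$ are Cauchy--Stieltjes integrals and hence lie in $\bigcap_{0<p<1}H^p$, so $w_1',w_2'$ and $J_w$ admit radial limits a.e.\ on $\mathbb T$; then Theorem~\ref{t1}\,$(i.3)$ together with~$\rm{d2})$ yields $J_w^*\ge m\,\mathrm{dist}(0,\partial\Omega)/2$ a.e.\ on~$\mathbb T$.

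There is, however, a genuine gap at the very end. What you have proved is the \emph{boundary} inequality $J_w^*\ge m\,\mathrm{dist}(0,\partial\Omega)/2$ a.e.\ on $\mathbb T$, and you declare this to be $(iv.1)$. It is not: $(iv.1)$ is the \emph{interior} estimate $J_w(z)>m\,\mathrm{dist}(0,\partial\Omega)/2$ for every $z\in\mathbb D$. The passage from the boundary inequality to the interior one is precisely the role of the Minimum Principle for the Jacobian (superharmonicity of $\ln J_w$, see Proposition~1.1 and its corollary in the paper), and under hypothesis~$\rm{d3})$ this is immediate because $J_w$ extends continuously to $\overline{\mathbb D}$. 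Under only $\rm{d1})$ and $\rm{d2})$ you have no such continuity, so the subharmonic function $-\ln J_w$ (or $1/J_w$) is a~priori only known to have a.e.\ radial limits $\le -\ln c$, and you must justify that it is dominated on $\mathbb D$ by the Poisson integral of those limits. This is a nontrivial step: it requires either a further $H^p$-type argument (showing $-\ln J_w$ has a harmonic majorant, using that $|w_1'|\ge R_0/4$ on $\mathbb D$ by Theorem~\ref{t.in.belowA} and controlling $\ln(1-|w_2'/w_1'|^2)$ via the $H^p$ membership of $w_1',w_2'$), or the approximation argument the paper alludes to just before the statement (approximate $f$ by boundary maps satisfying~$\rm{d3})$, apply Theorem~\ref{kal.publ2.8} to the approximants, and pass to the limit inside $\mathbb D$). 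Without one of these, your proof establishes the existence of $J_w^*$ a.e.\ and the boundary bound, but not $(iv.1)$ itself.
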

Under hypothesis  $\rm{d1})$  of  Theorem  \ref{kal.publ2.8},  Theorem RKC  states  that \newline
${\rm (v.1)}$   $J_w >0$  on  $\mathbb{D}$. \newline
{\it Question}\,1.   Can we modify  approach in \cite{Iw14,IwOn}  to  give  analytic
proof  of  Theorem  \ref{kal.publ2.8b} (of course without appeal to Theorem RKC (moreprecisely  to ${\rm (v.1)}$)?
\section{Estimates for the modulus of the
derivatives of harmonic univalent mappings in space}\label{s.space}


\begin{dfn} Let $G$   be a subset of the Euclidean space
${\mathbb{R}}^n$ and let
$$\varphi \colon G \to {\mathbb{R}} \cup \{+ \infty \} $$
be an lower  semi-continuous function.  Then,  $\varphi$   is
called $q$- superharmonic, $0< q \leq 1$,   if for any closed ball
$\overline{B(x,r)}$ of center $x$ and radius $r$ contained in $G$
and every real-valued continuous function $h$ on
$\overline{B(x,r)}$ that is harmonic in  $B(x,r)$   and satisfies
$\varphi(y) \geq q h(y)$ for all $y$   on the boundary  $\partial
B(x,r)$  of $B(x,r)$ we have $\varphi(y) \geq h(y)$   for all   $y
\in B(x,r)$. If $q=1$  we  say superharmonic  instead of  $1$-
superharmonic. \end{dfn}

\begin{dfn}\label{dfn.wH} Let  $G$ be a  domain  in $\mathbb{R}^n$.   Suppose that
$f:G\rightarrow \mathbb{R}^n$  is a $C^1$ function and   there is
a constant $c>0$  such that
for every $x\in G$\\
(a)  if $B(x,r)\subset G$   and   $B(fx,R)\subset f(B(x,r))$, then
$r \Lambda_f(x) \geq c R $.\\
(a') if  $B(fx,R)\subset f(\underline{B}(x,r))$, then
$r \Lambda_f(x) \geq c R $.

We say that $f$  has $H$-property  (respectively weak $H$-property)   if  (a) (respectively (a'))    holds.
\end{dfn}
 By Hall  lemma  planar euclidean harmonic mappings  have   $H$-property.

We say that   $\underline{F}\subset HQC_K(G,G')$   has $H$-property   if $F$  is closed  with respect to uniform convergence, and for $f\in  \underline{F}$,
$J(f)$  has no zeros  in $G$.

If  $\underline{F}\subset HQC_K(G,G')$   has $H$-property, then   $f\in\underline{F}$  has   weak $H$-property.

By Lemma  \ref{lhall1}, there is  a constant $c>0$  such that $d(x) \Lambda_f(x)\geq c d(fx)$,  $x \in G$.
\begin{lem}\label{lhall1}
Let $F_K$  be  a family of   harmonic  K-qc mapping $f:\mathbb{B
}\rightarrow R^n$ such  that for $f\in  F_K$, $J(f)$  has no
zeros,   $F_K$  is closed  with respect to uniform convergence,
$f(\mathbb{B})\supset \mathbb{B}$ and  $f(0)=0$. Then  there is a
constant $c>0$  such that if $f\in  F_K$   is harmonic  K-qc
mapping $f(\mathbb{B})\supset \mathbb{B}$, $f(0)=0$, then
$\Lambda_f(0) \geq c$.
\end{lem}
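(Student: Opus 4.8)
The plan is to prove Lemma \ref{lhall1} by a normal-families / compactness argument, reducing the claim $\Lambda_f(0) \geq c$ to the non-vanishing of $\Lambda_f(0)$ on a compact family, rather than trying to produce an explicit constant. The reason a bare-hands estimate is awkward here is that in dimension $n \geq 3$ there is no analogue of the Hall lemma giving a quantitative lower bound for $\Lambda_f(0)$ purely from $f(\mathbb{B}) \supset \mathbb{B}$ and $f(0)=0$; the qc hypothesis, the harmonicity, and the non-vanishing of the Jacobian must all be used together, and the cleanest way to combine them is through limits.

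\emph{Step 1 (set-up of the contradiction).} Suppose the conclusion fails. Then there is a sequence $f_j \in F_K$ with $f_j(\mathbb{B}) \supset \mathbb{B}$, $f_j(0)=0$, and $\Lambda_{f_j}(0) \to 0$. First I would show the $f_j$ form a normal family. Each $f_j$ is harmonic and $K$-qc with $f_j(0)=0$; standard distortion/Harnack estimates for harmonic $K$-qc maps (the sort collected in Section \ref{app}) bound $|f_j|$ on compact subsets of $\mathbb{B}$ once one knows a single normalization, and here the normalization $f_j(\mathbb{B}) \supset \mathbb{B}$, $f_j(0)=0$ prevents the images from degenerating. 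So, passing to a subsequence, $f_j \to f_0$ uniformly on compact subsets of $\mathbb{B}$, hence uniformly on $\overline{\mathbb{B}}$ after a further routine argument, and $f_0$ is harmonic with $f_0(0)=0$.

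\emph{Step 2 (the limit map is non-degenerate).} Here is where the hypothesis that $F_K$ is \emph{closed under uniform convergence and every member has non-vanishing Jacobian} is essential: it forces $f_0 \in F_K$, so $J(f_0)$ has no zeros in $\mathbb{B}$, in particular $J(f_0)(0) \neq 0$, hence $f_0'(0)$ is invertible and $\Lambda_{f_0}(0) = |f_0'(0)| > 0$. On the other hand, $\Lambda$ is continuous under this mode of convergence — $f_j'(0) \to f_0'(0)$ because harmonic functions converging uniformly on compacta converge in $C^1$ on compacta — so $\Lambda_{f_0}(0) = \lim_j \Lambda_{f_j}(0) = 0$, a contradiction. (One still owes the small remark that $f_0(\mathbb{B}) \supset \mathbb{B}$ and $f_0(0)=0$ are inherited in the limit, which is needed only to place $f_0$ legitimately in the hypotheses of $F_K$; this follows from uniform convergence together with $f_0$ being open, since $J(f_0)$ does not vanish.)

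\emph{Main obstacle.} The delicate point is Step 1: justifying that the $f_j$ are uniformly bounded on compacta and that the limit $f_0$ is not the zero map or otherwise degenerate. The inclusion $f_j(\mathbb{B}) \supset \mathbb{B}$ is an \emph{image} condition, not a pointwise bound, so one must invoke a normality result for harmonic $K$-qc maps that uses such a condition — e.g. that a harmonic $K$-qc $f$ with $f(0)=0$ whose image contains $\mathbb{B}$ satisfies $|f(x)| \asymp$ (something bounded below) on, say, $\overline{B(0,1/2)}$, so that the family is locally uniformly bounded and locally uniformly bounded away from being constant. This is precisely the kind of Koebe-type / Schwarz-type estimate for harmonic qc maps developed in the references cited in the paper; once it is in hand, the rest of the argument is the soft compactness reasoning above. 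A secondary (but routine) point is verifying $C^1$-convergence $f_j'(0) \to f_0'(0)$, which is immediate from interior gradient estimates for harmonic functions applied to $f_j - f_0$.
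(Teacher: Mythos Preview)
Your proposal is correct and follows essentially the same route as the paper's own proof: assume a sequence $f_j\in F_K$ with $\Lambda_{f_j}(0)\to 0$, use normality to pass to a uniformly convergent subsequence with limit $f_0\in F_K$ (by the closedness hypothesis), and derive a contradiction from $J(f_0)(0)\neq 0$. The paper's argument is terser (three lines), simply asserting normality and closedness without the justifications you supply in Steps 1--2; your discussion of the ``main obstacle'' (why the $f_j$ are locally bounded) and of $C^1$-convergence of derivatives makes explicit what the paper leaves implicit.
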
 Contrary  there is
a  sequence $f_n\in  F_K$   such that  $\Lambda_{f_n}(0)\rightarrow 0$.  Sequence $f_n$    forms a normal family and there is
a subsequence of $f_n$ which  converges  uniformly  to a limit     $f_0\in  F_K$; this is a contradiction.

Suppose that $\,F\,$ is
mapping from the unit ball $\mathbb{B}\subset \mathbb{R}^n$ into
$\mathbb{R}^m$ and
suppose  that $\omega=F^*(x)$ and $(\partial_r F)^*(x)$ exist at a
point $x\in
\mathbb{S}$.    Then \\
$(A1)$  $F'_r(x)$ exists and $(\partial_r F)^*(x)=F'_r(x)$.

{\it Proof of} $(A1)$.  By Lagrange theorem, there is  $t_k \in
[r,1)$ such that $F_k(rx)- F_k(x)= - (F_k)'_r(t_k x) (1-r)$.
Hence, since $(\partial_r h)^*(x)$ exist at a point $x\in
\mathbb{S}$, if $r$ tends $1$,  then  $(F_k)'_r(t_k x)$  tends
$(F_k)'_r( x)$.

\begin{thm}\label{thm.space1}
 \rm {(a1)}  Suppose  that   $\,h\,$ is    a euclidean harmonic mapping
from the unit ball $\mathbb{B}\subset \mathbb{R}^n$ onto  a
bounded convex  domain  $D=h(\mathbb{B})$, which
contains the  ball  $\,B(h(0);R_0)\,$.  Then \\
$\rm{(i.1)}$  $d(h(z),\partial D) \geq (1-|z|)\overline{c}_n R_0
$, \, $z\in \mathbb{B}$,  where $\overline{c}_n=
\frac{1}{2^{n-1}}$.\\
$(i.2)$    For every  $x \in \mathbb{S}$  and for  $0<r <1$, there
is $t \in
[r,1)$  such that  $|h'_r(t x)|\geq c_0$.\\
$(i.3)$  If  $h$ is K-qc   and   $(a2)$   $h$  has    $H$-property   or $ \rm {(a3)}$  for some  $k$,   $|h'_{x_k}|^n$  is  $q$- super harmonic,
 then $h$ is  co-Lipschitz on  $\mathbb{B}$.

 \rm {(a4)}  Suppose, in addition, to \rm {(a1)}    that  $h$ is K-qc   and  $D$ is $C^2$ domain.  Then \\
 $(i.4)$     $h'_r(x)$ exists and $(\partial_r h)^*(x)=h'_r(x) \geq c_0$  for almost everywhere  $x \in S$.
\end{thm}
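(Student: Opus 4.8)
The plan is to prove the four items in order, building each on the previous ones and on the earlier results in Section \ref{sec.pl}. For $(i.1)$, I would adapt the planar argument for Theorem \ref{t1}$(i.1)$ verbatim to the ball: to every $a\in\partial D$ associate the supporting hyperplane $\Lambda_a$, set $u_a(z)=(h(z)-a,n_a)$, a nonnegative harmonic function on $\mathbb{B}$, and apply the Harnack inequality for the ball. The only change from the plane is the sharp constant in Harnack's inequality on $\mathbb{B}^n$: for a nonnegative harmonic function $u$ on $\mathbb{B}^n$ one has $u(z)\ge \frac{1-|z|}{(1+|z|)^{n-1}}\,u(0)$, so $u_a(z)\ge \overline{c}_n(1-|z|)u_a(0)=\overline{c}_n(1-|z|)d(h(0),\Lambda_a)$ with $\overline{c}_n=2^{-(n-1)}$ obtained by bounding $(1+|z|)^{n-1}\le 2^{n-1}$. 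Taking the infimum over $a\in\partial D$ and using $B(h(0);R_0)\subset D$ gives $(i.1)$.

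For $(i.2)$: fix $x\in\mathbb{S}$, and let $a\in\partial D$ be a point realizing the supporting hyperplane ``in the direction of'' the boundary near $h(x)$; more precisely, since $h$ extends continuously (radially) one can work with $u=u_a$ for an appropriate $a$ and examine $g(t)=u(tx)=(h(tx)-a,n_a)$ on $[0,1]$. By $(i.1)$, $g(t)\ge \overline{c}_n R_0(1-t)$, while $g$ is a nonnegative function with $g(1)$ small (indeed $g(1)\to 0$ as $a$ is chosen to be the nearest boundary point to $h(x)$, or $g(1)=0$ outright if $a=h^*(x)$ lies on $\Lambda_a$). Then the mean value theorem gives a $t\in[r,1)$ with $g'(t)=\frac{g(1)-g(r)}{1-r}\le -\frac{g(r)}{1-r}\le -\overline{c}_n R_0$, hence $|g'(t)|\ge \overline{c}_n R_0$. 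Since $g'(t)=(h'_r(tx),n_a)\le |h'_r(tx)|$, we get $|h'_r(tx)|\ge c_0$ with $c_0=\overline{c}_n R_0$. (This is the space analogue of the argument giving $(h^*_r,n)\ge c_0$ in Theorem \ref{t1}$(i.2)$, except that without boundary regularity we only get the estimate at \emph{some} interior radius, not in the limit.)

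For $(i.3)$: the point of $(i.2)$ is that along every ray there is a radius at which $|h'_r|\ge c_0$, hence $\Lambda_h\ge |h'_r|\ge c_0$ there. The two alternative hypotheses $(a2)$ and $(a3)$ are precisely what is needed to propagate this pointwise lower bound on a sphere's worth of radii to a genuine lower bound on all of $\mathbb{B}$. Under $(a3)$, $|h'_{x_k}|^n$ is $q$-superharmonic, so it obeys a minimum principle on concentric balls; combined with the $K$-qc inequality $|h'(x)|^n\le K|J(x,h)|\le K^2 \ell(h'(x))^n$ this forces $\ell(h')\ge c'$ uniformly, i.e.\ $h$ is co-Lipschitz. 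Under $(a2)$ ($H$-property), I would instead use a normal-families / compactness argument in the spirit of Lemma \ref{lhall1}: $(i.1)$ and the $H$-property give $d(x)\Lambda_h(x)\succeq d(h(x))$, and since $D$ convex implies $d(h(x))\succeq d(x)$ (for $D$ convex the inradius estimate relating $d$ and $d_*$ holds up to the conformal-type distortion, which in the ball is controlled), we conclude $\Lambda_h\succeq c$, hence $\ell(h')\succeq c/K$ by quasiconformality. Finally $(i.4)$: when $D$ is $C^2$, the harmonic $K$-qc map $h$ has boundary regularity (the normal derivative $h'_r$ extends to a.e.\ boundary function, via $H^p$-theory / Kellogg-type results quoted in the appendix), so by $(A1)$, $(\partial_r h)^*(x)=h'_r(x)$ a.e.\ on $\mathbb{S}$, and taking $r\to 1$ in the estimate of $(i.2)$—now legitimate because of the boundary regularity—upgrades it to $h'_r(x)\ge c_0$ a.e.\ on $\mathbb{S}$.

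The main obstacle I expect is $(i.3)$ under hypothesis $(a2)$: passing from ``$\Lambda_h\ge c_0$ at one radius on each ray'' to a uniform lower bound on $\ell(h')$ throughout $\mathbb{B}$ requires both the comparison $d(h(x))\succeq d(x)$ (which in the ball is not immediate—one does not have a conformal uniformization in dimension $n\ge 3$, so the planar argument via Kellogg's theorem must be replaced by a direct convexity/compactness argument) and the $H$-property playing the role Hall's lemma played in the plane. Reconciling these two inputs cleanly—without circular appeal to the very co-Lipschitz conclusion—is the delicate point; the $q$-superharmonicity route $(a3)$ is cleaner because the minimum principle does the propagation automatically.
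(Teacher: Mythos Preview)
Your plan for $(i.1)$ and $(i.2)$ is correct and is exactly what the paper does: supporting hyperplane, Harnack on $\mathbb{B}^n$ with the constant $\overline{c}_n=2^{-(n-1)}$, and then the mean value theorem applied to the radial restriction of $u_a$. Likewise your outline for $(i.4)$ is structurally right; the only difference is that the paper does not invoke $H^p$ or Kellogg-type results but instead cites Kalaj's a~priori gradient bound \cite{k.jdam} to conclude that the partial derivatives of $h$ are bounded on $\mathbb{B}$, whence $h'_r(x)$ exists a.e.\ on $\mathbb{S}$, and then passes to the limit in the inequality $(h'_r,n)\ge c_0$ from $(i.2)$.

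The genuine gap is in $(i.3)$. Under $(a3)$ your mechanism is wrong: the $q$-superharmonicity of $|h'_{x_k}|^n$ is not used as a ``minimum principle on concentric balls'' but as a \emph{mean-value inequality},
\[
|h'_{x_k}(x)|^n \;\ge\; q\,\frac{1}{|\underline{B}_x|}\int_{\underline{B}_x}|h'_{x_k}|^n\,dV,
\]
and what you are missing is the input that makes this average large. That input is Theorem~\ref{aaaM1} (the quasiconformal Koebe/Astala--Gehring estimate for the averaged Jacobian), which gives $\underline{J}_{h,\mathbb{B}}(x)\asymp d_*(h(x))/d(x)$. Combining this with $(i.1)$, which says $d_*/d\ge \overline{c}_n R_0$, yields $\underline{J}(x)\succeq c$ uniformly; then, since $h$ is $K$-qc, $|h'_{x_k}|^n\asymp J_h$ pointwise, so the right-hand side above is $\succeq \underline{J}(x)^n\succeq c^n$, and hence $|h'_{x_k}(x)|\succeq c$ and $\lambda_h\succeq c$. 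Without Theorem~\ref{aaaM1} you have no way to pass from the ray-wise information of $(i.2)$ (a lower bound at \emph{one} radius per ray) to a volumetric lower bound on the Jacobian average, and your ``minimum principle'' alone does not supply a lower bound because you have no boundary data to compare against.

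For the $(a2)$ branch you correctly flag the difficulty. Note, however, that your step ``$H$-property gives $d(x)\Lambda_h(x)\succeq d_*(h(x))$'' presupposes $B(h(x),d_*(h(x)))\subset h(B(x,d(x)))$, which is essentially $h^{-1}$ mapping the maximal ball at $h(x)$ into the maximal ball at $x$ --- very close to the co-Lipschitz conclusion itself. The paper does not give a separate argument for $(a2)$ in the proof of $(i.3)$; it treats only the $(a3)$ route explicitly, via Theorem~\ref{aaaM1} as above. So your concern about circularity is warranted, and the resolution in the paper is simply to lean on the averaged-Jacobian estimate rather than on the $H$-property directly.
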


{\it Proof of} $(i.1)$.  To every  $a\in \partial D$  we associate
a nonnegative harmonic function $u=u_a$. Since  $D$ is convex, for
$a\in
\partial D$, there is a  supporting hyper-plane $\Lambda_a$ defined by $(w-a,n_a)=0$ where
$n=n_a\in T_a\mathbb{R} ^n$  is  a  unit vector such that
$(w-a,n_a)\geq 0$ for every $w\in \overline{D}$.
Define $u(z)=(h(z)-a,n_a)$  and  $d_a= d(h(0),\Lambda_a)$.  Then
$u(0)=(h(0)-a,n_a)= d(h(0),\Lambda_a)$.  Let  $a_0 \in \Lambda_a$
be the point such that  $d_a= |h(0)- a_0|$.  Then   from geometric
interpretation it is clear that $d_a \geq R_0$.

By  Harnack's inequality, $\overline{c}_n (1-r)  u(0)  \le u(x)$,
$x \in \mathbb{B}$ and  $r=|x|$,  where $\overline{c}_n=
\frac{1}{2^{n-1}}$.  In particular,  $\overline{c}_n d(x)  R_0 \le
u(x)\le |h(x) -a|$   for every  $a \in \partial D$. Hence, for a
fixed $x$,  $d_h(x)=\inf_{a \in \partial D}|h(x) -a| \geq
\overline{c}_n R_0  d(x)$  and therefore  we obtain  $(i.1)$.
\hfill   $\Box$

{\it Proof of} $(i.2)$.  Set $u(z)=(h(z)-h(x),n_{h(x)})$ and
$c_0=\overline{c}_n R_0 $. Since $u(x)=0$, then \\
(i.5) for $x \in \mathbb{S}$,   $u(rx)- u(x)\geq c_0 (1-r)$.

By Lagrange theorem, there is  $t \in [r,1)$ such that $u(rx)-
u(x)= u'_r(tx) (1-r)$.
\hfill   $\Box$

{\it Proof of} $(i.3)$. Suppose for example $(b1)$. By Theorems  \ref{thm.space1}  and  \ref{aaaM1}, $ \underline{J}(x)\succeq \frac{d_*}{d}  \succeq c$  for every  $x\in \mathbb{B}$.
By $ \rm {(a3)}$,

$|h'_{x_k}(x)|^n \geq q \frac{1}{|\underline{B}_{x}|}\int_{\underline{B}_{x}}|h'_{x_k}(z)|^n dz\,,x\in \mathbb{B}$.  Hence   $|h'_{x_k}(x)|\succeq \underline{J}(x)\succeq c$

and  $\lambda(h'(x)\succeq c$  and therefore  $h$ is  co-Lipschitz on  $\mathbb{B}$.\hfill   $\Box$

{\it Proof of} $(i.4)$.
First,  suppose  that   $h'_r(x)$ exists for some $x \in
\mathbb{S}$. By $(i.5)$, $(\frac{h(rx)-h^*(x)}{1-r},
n)\geq c_0 $,  where  $n=n_a$  and  $a=h^*(x)$.  Hence, since  $h'_r(x)$ exists, it follows  \\
$(i.6)$   $(h'_r(x), n)\geq \overline{c}_n R_0$.\\

By a result of D. Kalaj  \cite{k.jdam}, partial derivatives of $h$ are bounded and therefore    $h'_r(x)$ exists for almost everywhere  $x \in S$
and therefore   by  $(i.2)$ and $(i.6)$, $(\partial_r h)^*(x)=h'_r(x) \geq c_0$  for almost everywhere  $x \in \mathbb{S}$.
\hfill   $\Box$

In \cite{KaMpacific}  it  is proved the following theorem: a $K$
quasiconformal harmonic mapping of the unit ball $\mathbb{B}^n$
($n>2$) onto itself is Euclidean bi-lipschitz, providing that
$u(0) = 0$ and that $K<2^{n-1}$, where $n$ is the dimension of the
space. It is an extension of a similar result for hyperbolic
harmonic mappings with respect to hyperbolic metric (see Tam and
Wan, (1998)). The proof makes use of M\"obius transformations in
the space, and of a recent result which states that, harmonic
quasiconformal self-mappings of the unit ball are Lipschitz
continuous; this result first has been proved by  the first author
and then generalized also by the second  author.\newline
Introduce the quantity
\begin{equation*}
a_{f}(x)=a_{f,G}(x):=\mathrm{exp}\left( \frac{1}{n|B_{x}|}\int_{B_{x}}{\text{%
log}}J_{f}(z)dz\right), \, \, x\in G,
\end{equation*}%
associated with a quasiconformal mapping $f:G\rightarrow f(G)\subset \mathbb{%
R}^{n}$; here $J_{f}$ is the Jacobian of $f$; while $\mathbf{B}_{x}=\mathbf{B%
}_{x,G}$ stands for the ball $B(x;d(x,\ \partial G))$; and $|\mathbf{B}_{x}|$
for its volume.
Astala and Gehring \cite{ast.ge} observed that for certain distortion
property of quasiconformal mappings the function $a_{f}$,
defined the above,
plays analogous role as $|f^{\prime }|$ when $n=2$ and $f$ is
conformal; and 
they establish quasiconformal version of the well-know result due to Koebe,
cited here as Lemma \ref{lem.AG0}:

\begin{lem}{\cite{ast.ge}}\label{lem.AG0} Suppose that G and $G^{\prime }$ are domains
in $R^{n}$: If $f:G\rightarrow G^{\prime }$ is K-quasiconformal, then

\begin{equation*}
\frac{1}{c}\frac{d(f(x),\partial G^{\prime })}{d(x,\partial G)}\leq
a_{f,G}(x)\leq c\frac{d(f(x),\partial G^{\prime })}{d(x,\partial G)},\quad
x\in G,
\end{equation*}%
where $c$ is a constant which depends only on K and n.
\end{lem}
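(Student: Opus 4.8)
The plan is to reduce the statement to the one-sided distortion inequality for quasiconformal mappings applied on each Whitney-type ball $\mathbf{B}_x$, and to control $a_{f,G}(x)$ from above and below by the geometric mean of the Jacobian over that ball. First I would recall the basic fact: if $f\colon G\to G'$ is $K$-quasiconformal and $B=B(x,r)$ is a ball with $\overline{B}\subset G$, then $f(B)$ is ``roundish'' at scale $\mathrm{diam} f(B)$, and moreover there are constants $c_1=c_1(K,n)$ such that
\begin{equation*}
c_1^{-1}\,\frac{\bigl(\mathrm{diam} f(B)\bigr)^n}{r^n}\ \le\ \frac{1}{|B|}\int_B J_f(z)\,dz\ \le\ c_1\,\frac{\bigl(\mathrm{diam} f(B)\bigr)^n}{r^n}.
\end{equation*}
The upper bound is just $\int_B J_f=|f(B)|\le c\,(\mathrm{diam} f(B))^n$; the lower bound uses that a $K$-qc image of a ball contains a ball of comparable radius (the standard distortion/quasisymmetry estimate), so $|f(B)|\succeq (\mathrm{diam} f(B))^n$ as well.

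Next I would choose $B=\mathbf{B}_x=B(x,d(x,\partial G))$. On one hand the Koebe-type estimate for the Euclidean distance gives $\mathrm{diam} f(\mathbf{B}_x)\asymp d(f(x),\partial G')$ with constants depending only on $K,n$: indeed $f(\mathbf{B}_x)$ is a connected subset of $G'$ containing $f(x)$ whose image ``fills'' a definite fraction of the distance to $\partial G'$, while it cannot reach $\partial G'$ since $\mathbf{B}_x\subset G$; the quasiconformal quasisymmetry inequality makes both comparisons quantitative. On the other hand, by the arithmetic--geometric mean inequality,
\begin{equation*}
a_{f,G}(x)=\exp\!\left(\frac{1}{n|\mathbf{B}_x|}\int_{\mathbf{B}_x}\log J_f\,dz\right)\ \le\ \left(\frac{1}{|\mathbf{B}_x|}\int_{\mathbf{B}_x}J_f\,dz\right)^{1/n},
\end{equation*}
which together with the displayed estimate above and $|\mathbf{B}_x|\asymp d(x,\partial G)^n$ yields the required upper bound $a_{f,G}(x)\preceq d(f(x),\partial G')/d(x,\partial G)$. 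For the reverse inequality one applies the same reasoning to the inverse map $f^{-1}$, which is $K$-quasiconformal from $G'$ to $G$; since $\log J_{f^{-1}}(f(z))=-\log J_f(z)$ and $f$ is absolutely continuous in measure, a change of variables converts the geometric mean of $J_{f^{-1}}$ over a Whitney ball around $f(x)$ into (a quantity comparable to) $a_{f,G}(x)^{-1}$, and the upper bound already proved for $f^{-1}$ gives the lower bound for $f$.

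The main obstacle is making the comparison between the geometric mean $a_{f,G}(x)$ and the \emph{arithmetic} mean of $J_f$ work in \emph{both} directions with constants depending only on $K$ and $n$: the A--G inequality is one-sided, so the reverse estimate genuinely requires passing to $f^{-1}$ and, with it, a careful change-of-variables argument identifying the geometric mean of $J_{f^{-1}}$ over $\mathbf{B}_{f(x),G'}$ with $a_{f,G}(x)^{-1}$ up to bounded factors. This hinges on two quasiconformal facts that I would invoke rather than reprove: that $f$ and $f^{-1}$ are quasisymmetric with data depending only on $K,n$ (so that $\mathbf{B}_{f(x),G'}$ and $f(\mathbf{B}_{x,G})$ are comparable as sets), and that $\log J_f\in L^1_{\mathrm{loc}}$ with the change-of-variables formula valid — both classical for quasiconformal maps in $\mathbb{R}^n$. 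Once these are in place the estimate is, as indicated, a combination of the A--G inequality, the volume identity $\int_B J_f=|f(B)|$, the round-image distortion bound, and the Whitney-ball comparison $|\mathbf{B}_x|\asymp d(x,\partial G)^n$.
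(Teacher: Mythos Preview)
The paper does not give its own proof of this lemma; it is quoted from Astala--Gehring \cite{ast.ge} and used as a black box (and as motivation for the half-ball variant, Theorem~\ref{aaaM1}). So there is no argument in the text for you to be compared against.

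That said, your sketch contains a genuine gap. You state the distortion estimate ``$|f(B)|\asymp(\mathrm{diam}\,f(B))^n$'' for balls with $\overline B\subset G$, and then apply it with $B=\mathbf B_x=B(x,d(x,\partial G))$; but this ball \emph{touches} $\partial G$, so your own hypothesis fails --- and so does the conclusion. Concretely, take $G=G'=\{\mathrm{Im}\,z>0\}$, $f(z)=-1/z$ (conformal, so $K=1$), and $x=i$. Then $d=d^*=1$, $\mathbf B_x=B(i,1)$ is tangent to $\mathbb R$ at $0$, and since $f(0)=\infty$ the image $f(\mathbf B_x)$ is unbounded, with $|f(\mathbf B_x)|=\int_{B(i,1)}|z|^{-4}\,dA=\infty$. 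Jensen's inequality therefore yields only the useless bound $a_f(i)^2\le\infty$, whereas a direct computation (average $\log|z|$ over each circle $S(i,r)$, $r<1$, by the mean-value property and $\log|i|=0$) gives $a_f(i)=1=d^*/d$, exactly as the lemma predicts. The actual Astala--Gehring argument circumvents this by invoking Reimann's theorem that $\log J_f$ lies in $\mathrm{BMO}$ with norm bounded in terms of $K$ and $n$ alone: this makes the averages of $\log J_f$ over $\mathbf B_x$ and over the half-ball $B(x,d/2)$ differ by at most a constant, and on the half-ball one genuinely has $\overline B\subset G$, so the qc distortion estimates you describe are available and yield the comparison with $d^*/d$. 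Your appeal to the inverse map does not by itself repair the gap, since the Whitney ball $\mathbf B_{f(x),G'}$ touches $\partial G'$ and the same obstruction recurs.
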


Our next result concerns the quantity
\begin{equation*}
\underline{E}_{f,G}(x):=\frac{1}{|\underline{B}_{x}|}\int_{\underline{B}_{x}}J_{f}(z) dV(z)\,,x\in G,
\end{equation*}%
associated with a quasiconformal mapping $f:G\rightarrow f(G)\subset \mathbb{%
R}^{n}$; here   $dV(z)=dz$ is the Euclidean volume element  $dz_1 dz_2\cdots dz_n$  and $z=(z_1\cdots z_n)$  and  $J_{f}$ is the Jacobian of $f$; while $\underline{B}_{x}=%
\underline{B}_{x,G}$ stands for the ball $B(x,\ d(x,\ \partial G)/2)$ and $%
|B_{x}|$ for its volume.

Define

\begin{equation*}
\underline{J}_{f}=\underline{J}_{f,G}=\sqrt[n]{\underline{E}_{f,G}}\,.
\end{equation*}

\begin{thm}[\cite{aaa}] \label{aaaM1}
\label{GA1} Suppose that $G$ and $G^{\prime }$ are domains in $\mathbb{R}^{n}$: If $%
f:G\rightarrow G^{\prime }$ is K-quasiconformal, then

\begin{equation*}
\frac{1}{c}\frac{d(f(x),\partial G^{\prime })}{d(x,\partial G)}\leq
\underline{J}_{f,G}(x)\leq c\frac{d(f(x),\partial G^{\prime })}{d(x,\partial
G)},\quad x\in G,
\end{equation*}%
where $c$ is a constant which depends only on $K$ and $n$.
\end{thm}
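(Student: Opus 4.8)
The plan is to derive the estimate from the change-of-variables formula for quasiconformal maps together with the quasiconformal distortion (Koebe--Schwarz) theorem. Although Lemma~\ref{lem.AG0} has the same shape, I would not try to pass between the half-ball average $\underline{E}_{f,G}$ and the logarithmic mean $a_{f,G}$: the natural bridge, Jensen's inequality plus an $A_\infty$ comparison, is obstructed because $\log J_f$ is known to lie in $BMO$ with $(K,n)$-controlled norm only on balls whose double lies in $G$, so the means of $\log J_f$ over $\underline{B}_x=B(x,d(x)/2)$ and over $\mathbf{B}_x=B(x,d(x))$ need not be comparable; a direct geometric argument avoids this.

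First I would record that, $f$ being $K$-quasiconformal, it is an $\mathrm{ACL}^n$ homeomorphism, differentiable a.e.\ with $J_f\ge 0$ and satisfying Lusin's condition $(N)$; hence $\int_E J_f\,dV=|f(E)|$ for every measurable $E\subseteq G$, and in particular $\underline{E}_{f,G}(x)=|f(\underline{B}_x)|/|\underline{B}_x|$ with $|\underline{B}_x|=\omega_n(d(x)/2)^n$. Thus it suffices to prove $|f(\underline{B}_x)|\asymp d^{\ast}(f(x))^n$ with constants depending only on $K$ and $n$, and then take $n$-th roots. To this end, set $\rho_x:=\operatorname{dist}\big(f(x),\partial f(B(x,d(x)))\big)$. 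The bound $\rho_x\le d^{\ast}(f(x))$ is immediate, since $f(B(x,d(x)))\subseteq G'$ forces $\mathbb{R}^n\setminus f(B(x,d(x)))\supseteq\mathbb{R}^n\setminus G'$. For the reverse bound $\rho_x\ge c_0^{-1}d^{\ast}(f(x))$ I would apply the quasiconformal distortion inequality $|g(y)-g(0)|\le\eta_{K,n}(|y|)\operatorname{dist}\big(g(0),\partial g(B(0,1))\big)$ to $g=f^{-1}$ on $B(f(x),d^{\ast}(f(x)))\subseteq G'$, rescaled to the unit ball: choosing $\tau=\tau(K,n)>0$ with $\eta_{K,n}(\tau)\le 1$ gives $f^{-1}\big(B(f(x),\tau d^{\ast})\big)\subseteq B(x,d(x))$, hence $B(f(x),\tau d^{\ast})\subseteq f(B(x,d(x)))$ and $\rho_x\ge\tau d^{\ast}$. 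Applying the same distortion estimate, and its Koebe-type converse for the inclusion $\supseteq$, to $f$ on $B(x,d(x))$ yields $B\big(f(x),c_1^{-1}\rho_x\big)\subseteq f(\underline{B}_x)\subseteq B\big(f(x),c_1\rho_x\big)$ for some $c_1=c_1(K,n)\ge 1$; combined with the bounds on $\rho_x$ this gives $\omega_n(c_0c_1)^{-n}(d^{\ast})^n\le|f(\underline{B}_x)|\le\omega_n c_1^n(d^{\ast})^n$, as required.

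The main obstacle is the lower bound $\rho_x\gtrsim d^{\ast}(f(x))$: a priori the open set $f(B(x,d(x)))$ could approach $f(x)$ from within $G'$, so one must genuinely use that $f^{-1}$ is $K$-quasiconformal --- and, crucially, keep track of the distortion constant, passing to a radius $\tau$ with $\eta_{K,n}(\tau)\le 1$ rather than using $|y|=\tfrac12$, so that the relevant preimage lands inside $B(x,d(x))$ and not merely in a dilate of it. The rest is bookkeeping with $c_0$, $c_1$ and $\omega_n$, all of which depend only on $K$ and $n$.
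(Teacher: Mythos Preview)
The paper does not prove this statement; it is imported from the cited reference \cite{aaa} with no argument given here, so there is nothing in-paper to compare your proof against. Your approach is correct and is the natural one: the change-of-variables identity $\underline{E}_{f,G}(x)=|f(\underline{B}_x)|/|\underline{B}_x|$ reduces the question to $|f(\underline{B}_x)|\asymp d^{\ast}(f(x))^{n}$, and you obtain this from the quasiconformal egg-yolk/Koebe distortion applied to $f$ on $B(x,d(x))$ and to $f^{-1}$ on $B(f(x),d^{\ast})$.

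One step is left implicit and is worth writing out. In establishing $\rho_x\ge\tau\,d^{\ast}$ you apply the distortion bound to $g=f^{-1}$ and jump to $f^{-1}\big(B(f(x),\tau d^{\ast})\big)\subseteq B(x,d(x))$; but the inequality you quote only yields $|f^{-1}(w)-x|\le\eta_{K,n}(\tau)\,R''$ with $R''=\operatorname{dist}\big(x,\partial f^{-1}(B(f(x),d^{\ast}))\big)$, not with $d(x)$ on the right. You need the extra observation $R''\le d(x)$, which follows exactly as in your bound $\rho_x\le d^{\ast}$, now applied to $f^{-1}$: since $f^{-1}\big(B(f(x),d^{\ast})\big)\subseteq G$, its complement contains $\partial G$, whence $R''\le d(x)$. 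With that line inserted the argument is complete, and your caution about not routing through Lemma~\ref{lem.AG0} via Jensen/$BMO$ is reasonable --- the direct geometric proof you give is cleaner and self-contained.
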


If  $G$ and $G'$ are domains in $ \mathbb{R}^n$,  by $QCH(G,G')$
(respectively  $QCH_K(G,G')$ ) we denote the set of Euclidean
harmonic quasiconformal mappings (respectively K-qc)  of $G$ onto
$G'$.

If  $D$ is a domain in
$ \mathbb{R}^n$,  by $QCH(D)$ we denote the set of Euclidean
harmonic quasiconformal mappings of $D$ onto itself.
\begin{dfn}
Let $Q_H (G)$   be a family of    harmonic mappings $h$  from $G$
into $\mathbb{R}^n$ such that\\
${\rm (b1)}$   $J(h)$ has no zeros in  $G$,  and\\
${\rm (b2)}$   which is closed with respect to uniform
convergence on compact subsets  and\\
${\rm (b3)}$  for every  sequence  $x_n$  which tends to  a $x_0
\in \partial G$,  $h_n\in Q_H$,   where  $h_n (x)= \frac{1}{d_n} h
((d_n x +x_n))$  and  $d_n=d(x_n)=dist(x_n)$. If $G$ is the unit
ball  $\mathbb{B}$ we write $Q_H $ instead of  $Q_H (G)$.
\end{dfn}
Using a criteria  for normality of  a qc family, one  can
establish  criteria when  a  subfamily $Q$ of $QCH(G)$,  for which
$K_O(f) < 3^{n-1}$ for  every $f\in Q$,  is $Q_H (G)$-family.

\begin{dfn}\label{dfn3.1}
Let   $f:G \rightarrow G'$ be a $C^1$ function. We say that   $f$  has Jacobian non zero normal family-property  if\\
${\rm (b1)}$   $J(f)$ has no zeros in  $G$,  and\\
${\rm (b2)}$  for every  sequence  $x_n$  which tends to  a $x_0
\in \partial G$,  $(f_n)$  forms a normal family,   where  $f_n (x)= \frac{1}{d^*_{n}} f
((d_n x +x_n))$,  and  $d_n=d(x_n)=dist(x_n)$   and   $d^*_{n}=d(f(x_n))$  and \\
${\rm (b3)}$ for  every limit  $f_0$  of   $(f_n)$  in sense of
the  uniform convergence, $f_0$  is  a $C^1$ function   and
$J(f_0)$ has no zeros in  $G$. In this setting,  we say  that the
sequence $(f_n)$  is  associated sequence  to  the  sequence
$(x_n)$  and that  $f_0$  is  the   associated  limit.
\end{dfn}

\begin{thm}\label{thm.space2}
\rm{(a)}   Suppose  that   $\,h\,$ is    a euclidean harmonic K-qc mapping
from the unit ball $\mathbb{B}\subset \mathbb{R}^n$ onto  a
bounded convex  domain  $D=h(\mathbb{B})$, which
contains the  ball  $\,B(h(0);R_0)\,$.\newline
$\rm{(i.7)}$ If  $h$  has Jacobian non zero normal family-property  (or  $h\in  Q_H$),   then $h$ is  co-Lipschitz on  $\mathbb{B}$.\newline
$\rm{(i.8)}$ If  $\log J_h$  is $q$-superharmonic, then  $J_h\geq c$.
\end{thm}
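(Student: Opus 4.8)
The plan is to prove Theorem~\ref{thm.space2} by combining the pointwise interior Jacobian estimate already established in Theorem~\ref{thm.space1} with a normal-family compactness argument in the spirit of Lemma~\ref{lhall1}. Recall that by Theorem~\ref{aaaM1} (the Astala--Gehring type estimate applied to $\underline{J}_f$) together with the distance estimate $(i.1)$ of Theorem~\ref{thm.space1} and convexity of $D$, we already have $\underline{J}_h(x) \approx d_*(h(x))/d(x) \succeq c$ for every $x\in\mathbb{B}$; so the content of $(i.7)$ is to upgrade the control on the \emph{averaged} Jacobian to control on $\Lambda_h$ (equivalently $\lambda_h$, since $h$ is $K$-qc) at every individual point, which is exactly what the $H$-property was used for in the proof of $(i.3)$. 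The point of hypothesis $\rm{(i.7)}$ is that the Jacobian-non-zero-normal-family property supplies the needed rigidity without assuming the $H$-property.

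First I would fix $x\in\mathbb{B}$ and argue by contradiction: suppose there is a sequence $x_m\in\mathbb{B}$ with $\lambda_h(x_m)\to 0$. If the $x_m$ stay in a compact subset of $\mathbb{B}$ this is immediately impossible, since $h$ is a harmonic diffeomorphism with $J_h>0$, so $\lambda_h$ is continuous and positive there; hence we may assume $x_m\to x_0\in\partial\mathbb{B}$. Now form the normalized maps $h_m(y)=\frac{1}{d^*_m}h(d_m y + x_m)$ with $d_m=d(x_m)=1-|x_m|$ and $d^*_m=d(h(x_m))=d_*(h(x_m))$; these are harmonic and $K$-qc on $\mathbb{B}$. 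By the definition of the Jacobian-non-zero-normal-family property (Definition~\ref{dfn3.1}), the associated sequence $(h_m)$ forms a normal family and any uniform limit $h_0$ is a $C^1$ mapping with $J_{h_0}$ nowhere zero on $\mathbb{B}$. I would then extract a subsequence with $h_m\to h_0$ locally uniformly, and use standard convergence of partial derivatives for harmonic (or $K$-qc $C^1$) maps to get $h_m'\to h_0'$ locally uniformly. Computing at the center: $\Lambda_{h_m}(0)=\frac{d_m}{d^*_m}\Lambda_h(x_m)$ and $\lambda_{h_m}(0)=\frac{d_m}{d^*_m}\lambda_h(x_m)$; by $(i.1)$ and Astala--Gehring the ratio $d_m/d^*_m$ is bounded above, so $\lambda_{h_m}(0)\to 0$, forcing $\lambda_{h_0}(0)=0$ and hence $J_{h_0}(0)=0$, contradicting the non-vanishing of $J_{h_0}$. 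This yields $\lambda_h\succeq c$ on $\mathbb{B}$, which is precisely the co-Lipschitz property. The parenthetical case $h\in Q_H$ is handled the same way, the only change being that one invokes the closure and boundary-normalization axioms $\rm{(b1)}$--$\rm{(b3)}$ of $Q_H$ in place of Definition~\ref{dfn3.1} to identify the limit as an element of $Q_H$ (so again $J_{h_0}$ has no zeros).

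For $(i.8)$ the argument is shorter and follows the template of the proof of $(i.3)$: if $\log J_h$ is $q$-superharmonic, then for each ball $\underline{B}_x=B(x,d(x)/2)\subset\mathbb{B}$ the sub-mean-value inequality gives $J_h(x)\ge \exp\!\big(\frac{q}{|\underline{B}_x|}\int_{\underline{B}_x}\log J_h\big)$, which by Jensen is dominated by (a power of) the averaged quantity $a_{h}$; combining this with Lemma~\ref{lem.AG0} and the distance estimate $d_*(h(x))\succeq d(x)$ (from $(i.1)$ plus convexity, as in the proof of $(i.3)$) gives $J_h(x)\succeq c$ at every $x$, hence the asserted uniform lower bound $J_h\ge c$.

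The main obstacle I anticipate is the passage to the limit: one must be sure that the normalized maps $h_m$ really do converge with their first derivatives, and that the limit $h_0$ inherits $J_{h_0}\ne 0$ everywhere and not merely $J_{h_0}\ge 0$ --- this is exactly where the \emph{defining} hypothesis of the Jacobian-non-zero-normal-family property (or of the family $Q_H$) is essential, since for a general normal limit of diffeomorphisms one can only conclude $J_{h_0}\ge 0$. A secondary technical point is verifying that $d_m/d^*_m$ stays bounded, but this is immediate from the two-sided comparison $d_*(h(x))\approx \underline{J}_h(x)\,d(x)$ of Theorem~\ref{aaaM1} together with the upper bound $\underline{J}_h(x)\preceq \Lambda_h(x)^{?}$... more simply from $d_*(h(x))\le \operatorname{diam}D$ and $d(x)\to 0$, so the ratio is bounded below, while boundedness above follows from $(i.1)$. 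I would present these comparisons as routine consequences of the already-cited lemmas.
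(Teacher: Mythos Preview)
Your argument is correct and follows essentially the same route as the paper's (very terse) proof: argue by contradiction, rescale via $h_m(y)=\tfrac{1}{d^*_m}h(d_my+x_m)$, extract a normal-family limit $h_0$, and derive $J_{h_0}(0)=0$ against hypothesis (b3). The only noticeable difference is that the paper first uses $\underline{J}_h(x_n)\succeq c$ (from Theorems~\ref{thm.space1} and~\ref{aaaM1}) to locate a point $y_n\in\underline{B}(x_n)$ with $|h'_r(y_n)|\succeq c$ before invoking the normal-family argument, whereas you bypass that step and rely directly on (b3) to rule out a degenerate limit; your version is in fact cleaner, since (b3) already forces $J_{h_0}\ne 0$ without needing to exhibit a point of large derivative. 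For $(i.8)$ your sketch matches the paper's one-line reference to Theorem~\ref{thm.space1}$(i.1)$ and Lemma~\ref{lem.AG0}; just be careful that the $q$-superharmonic comparison yields a factor $1/q$ rather than $q$ in front of the average, and that $a_f$ in Lemma~\ref{lem.AG0} is defined over $B_x$, not $\underline{B}_x$, though this does not affect the conclusion. Your closing remarks about bounding $d_m/d_m^*$ from below are unnecessary for your argument---only the upper bound (from $(i.1)$) is used.
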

\begin{proof}$\rm{(i.7)}$:
Suppose  that    there is sequence  $x_n$    such that
$|h'_r(x_n)|\rightarrow 0 $  and set  $d_n=d(x_n)$. Since by
Theorems  \ref{thm.space1}  and  \ref{aaaM1},
$\underline{J}(x_n)\succeq \frac{d_*}{d}  \succeq c$, there is  a
point $y_n\in \underline{B}(x_n)$  such that  $|h'_r(y_n)|\succeq
c$.  Apply a normal family argument on $h_n (x)= \frac{1}{d_{*n}} h
((d_n  x +x_n))$.\newline
$\rm{(i.8)}$  follows from Theorem  \ref{thm.space1},$\rm{(i.8)}$,    and Lemma  \ref{lem.AG0}.
\end{proof}
Using the Thom splitting lemma, we can prove

\begin{prop}\label{p-tom}
Suppose that $f$ is real-valued  function defined at a neighboorhood $ U(x_0)$   of a point  $x_0 \in \mathbb{R}^n$,   $f$  has partial derivatives up to the order  $3$  at $x_0$
and that   $f:  U(x_0)\rightarrow \mathbb{R}^n$  is  injective,
where $U(x_0)$  is a neighborhood   of $x_0$ in   $\mathbb{R}^n$.  If   $\partial_k f(x_0)=0$,
then    $\partial^2_{ij}f(x_0)=0$,  $i,j=1,2,\cdots, n$, that is   ${\rm Hess(f)(x_0)=[0]}$.
\end{prop}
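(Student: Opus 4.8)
The plan is to show that, near a point $x_0$ where one first partial derivative $\partial_k f$ vanishes, injectivity forces all second partials to vanish as well; this is a local statement, so I may work in a fixed small coordinate ball $U(x_0)$ and take $x_0=0$ after translation. The key device is the \emph{Thom splitting lemma} (Morse-type normal form): if $f$ has a nondegenerate Hessian in the $k$-th coordinate, i.e.\ $\partial^2_{kk}f(0)\neq 0$, then after a smooth change of variables the $k$-th coordinate of $f$ can be put in the form of a sum of squares plus a function of the remaining variables, hence locally \emph{two-to-one} along a suitable curve; and a genuine $2\times2$ (or larger) block of $\mathrm{Hess}(f)(0)$ that is not identically zero produces, after diagonalisation, a coordinate that is an indefinite or definite quadratic form of at least two variables, which again fails injectivity.

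\emph{Structure of the argument.} First I would observe that it suffices to prove: if $\mathrm{Hess}(f)(0)\neq[0]$ while $\nabla f(0)$ has a zero entry $\partial_k f(0)=0$, then $f$ is not injective on any neighbourhood of $0$; the contrapositive is exactly the Proposition. Second, by an orthogonal change of coordinates in the domain I diagonalise the symmetric matrix $A:=\mathrm{Hess}_{??}$ — here one must be careful, since $f$ is vector-valued, so $\mathrm{Hess}(f)$ really means the collection of Hessians $H_j=\mathrm{Hess}(f_j)(0)$ of the components; the hypothesis $\partial^2_{ij}f(0)=0$ is the statement that \emph{every} $H_j$ vanishes, and its negation is that some component $f_m$ has $H_m\neq0$. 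So I reduce to a single real-valued $f_m$ with $H_m\neq0$ and $\nabla f_m(0)$ possibly nonzero, but with $\partial_k f_m$ — wait: the hypothesis $\partial_k f(0)=0$ is that the \emph{same} index $k$ kills all components; I will use that $\partial_k f_m(0)=0$ for the component $m$ with nonvanishing Hessian. Third, I expand $f_m(x)=f_m(0)+\sum_{i\neq k}(\partial_i f_m)(0)x_i+\tfrac12 x^{T}H_m x+o(|x|^2)$, noting the $x_k$-linear term is absent. If the $(k,k)$ entry of $H_m$ is nonzero, the Thom/Morse splitting lemma gives coordinates $(y_k,y')$ with $f_m$ depending on $y_k$ only through $\pm y_k^2$, so the two points $(\,{+}y_k,y')$, $(\,{-}y_k,y')$ have the same image under $f_m$ — and by a further application of the splitting lemma simultaneously on the constraint that the \emph{other} components agree (an implicit-function-theorem argument solving $f_i(+y_k,y')=f_i(-y_k,y')$ for $n-1$ of the variables $y'$), one produces two genuinely distinct preimages of a single point, contradicting injectivity of $f:U(x_0)\to\mathbb{R}^n$. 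If instead the $(k,k)$ entry vanishes but $H_m\neq0$, there is a nonzero off-diagonal or other diagonal entry; restricting to the $2$-plane spanned by $e_k$ and the relevant $e_j$ and using that the $x_k$-linear part is absent, the leading behaviour of $f_m$ in that plane is a nonzero quadratic form with a nontrivial null direction or sign change, again yielding a one-parameter family of coincidences to be promoted, via the same implicit-function step, to a contradiction with injectivity.

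\emph{Main obstacle.} The delicate point is not the splitting lemma itself but the \emph{simultaneous} control of all $n$ component functions: finding two distinct points $p\neq q$ in $U(x_0)$ with $f(p)=f(q)$ requires solving $n$ scalar equations $f_i(p)=f_i(q)$, and the $2n$ variables $(p,q)$ give enough room only if one sets up the right parametrisation. The clean way is: use the splitting-lemma coordinates adapted to $f_m$ to replace the single equation $f_m(p)=f_m(q)$ by the symmetry $y_k\mapsto -y_k$ (which is automatic), leaving $n-1$ equations $f_i(+y_k,y')=f_i(-y_k,y')$ in the $n-1$ variables $y'$; at $y_k=0$ these hold for all $y'$, and differentiating in $y_k$ one checks the Jacobian in $y'$ is that of an odd perturbation, so the implicit function theorem (applied to $g_i(y_k,y'):=[f_i(+y_k,y')-f_i(-y_k,y')]/y_k$ extended smoothly across $y_k=0$) yields a branch $y'=y'(y_k)$ with $y'(0)$ free, giving for each small $y_k\neq0$ two distinct points with equal $f$-image. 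Rank/regularity bookkeeping — ensuring the relevant Jacobian is invertible, which is where one uses that $f$ is a genuine map to $\mathbb{R}^n$ with, one may assume after a linear adjustment of the target, the differential $df(0)$ of maximal possible rank $n-1$ on the complement of $e_k$ — is the part that needs the most care, and I expect that to be the technical heart of the proof.
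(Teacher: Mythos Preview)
The paper gives no detailed proof of this proposition; it merely says ``Using the Thom splitting lemma, we can prove'' and then states the result. Your invocation of the Thom/Morse splitting lemma is therefore exactly the tool the paper has in mind, so in spirit you are on the same track.

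However, there is a genuine gap, and it stems from a misreading of the hypothesis. You treat ``$\partial_k f(x_0)=0$'' as the vanishing of a \emph{single} column of $Df(x_0)$ (``the same index $k$ kills all components''), and later you explicitly assume $df(0)$ has rank $n-1$ on the complement of $e_k$. But the paper's statement (despite the sloppy phrasing ``real-valued'') is meant with the quantifier \emph{for all} $k$, i.e.\ $Df(x_0)=0$; this is how it is used in the very next proposition, where from $J(f,0)=0$ and $K$-quasiconformality one deduces that \emph{every} partial vanishes. Under your reading the proposition is simply false: take $n=2$, $f(x,y)=(x+y^2,\,y^3)$; then $\partial_2 f(0)=(0,0)$, $f$ is injective on all of $\mathbb{R}^2$, yet $\partial^2_{22}f_1(0)=2\neq 0$. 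Running your own argument on this example exposes where the implicit-function step breaks: in the splitting coordinates $f_1=x+y^2$ is already split, the reflection $y\mapsto -y$ fixes $f_1$, and your auxiliary function $g_2(y,x)=[f_2(x,y)-f_2(x,-y)]/y=2y^2$ is independent of $x$, so $\partial_x g_2\equiv 0$ and the IFT does not apply. The rank-$(n-1)$ assumption you want to impose on $df(0)$ is precisely what rescues such examples from being counterexamples---but it is incompatible with the correct hypothesis $Df(x_0)=0$. You should restart the argument with the full hypothesis $Df(x_0)=0$: then every component $f_m$ has a genuine critical point at $x_0$, the splitting lemma applies directly to whichever $f_m$ has nonzero Hessian, and (crucially) the \emph{other} components now satisfy $f_i(x)=f_i(x_0)+O(|x-x_0|^2)$, which changes the counting in the simultaneous-coincidence step.
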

Frequently  we use notation $X=(x,y,z)\in  \mathbb{R}^3$. If we
work in $\mathbb{R}^n$ it is convenient to switch the notation to
$x=(x^1,x^2, \cdots, x^n)\in \mathbb{R}^n$.
\begin{exa}\label{ex2rm} Let $a\neq 0$.
A radial mapping  $f_a$ in n-space  is given by:     $f(X) = f_a(X) =  |X|^{a-1}\,X$, where $X\in  \mathbb{R}^n$. Prove\\
$(i.2)$    $K_I(f)=|a|$, $K_O(f)=|a|^{n-1}$    if   $|a|\geq 1$;  in particular  $K(f_3)=K_O(f_3)=3^{n-1}$;\\
$K_I(f)=|a|^{1-n}$, $K_O(f)=|a|^{-1}$    if   $|a|\leq 1$.\\
In particular,  for $n=3$,  $X=(x,y,z)\in  \mathbb{R}^3$,  \\
$K_I(f)=|a|$, $K_O(f)=|a|^{2}$    if   $|a|\geq 1$;\\
 Now  we consider    3-space.\\
$(i.3)$   For $a=3$  set  $g=f_3 $; then    $\partial_k g(0)=0$,
$\partial^2_{ij}g(0)=0$  and   $g^1(X)= x^3  +   x y^2 + x
z^2$.\newline
$(i.4)$ \,For  $|a|\geq 1$,    $f_a$    is  co-Lipschitz on
$\mathbb{B}\setminus F$, where $F$  is a compact subset of
$\mathbb{B}$.

Set  $x'= f(x)$, $x=(x^1,x^2,x^3)$; then $ |x'|=|x|^{a}$  and   by
the cosine formula, we find $|x|^{2a} + |y|^{2a} - |x'-
y'|^2=|x|^{a-1} |y|^{a-1}( |x|^2 + |y|^2 - |x-y|^2 )$.
\begin{equation}
|x'- y'|^2=|x|^{a-1} |y|^{a-1}|x-y|^2  +R(x,y),
\end{equation}
where  $R(x,y)= |x|^{2a}  + |y|^{2a} - |x|^{a+1} |y|^{a-1}  -
|x|^{a-1}|y|^{a+1}$.

Without  loss  of  generality    we can suppose that   $|y|=
\lambda |x|$, $0\leq \lambda \leq 1$.  Then  $R(x,y)=  (1 -
\lambda^{a-1}) (1 - \lambda^{a+1})|x|^{2a}\geq 0\,$.  Therefore
$|x'- y'|^2\geq |x|^{a-1} |y|^{a-1}|x-y|^2\,$  and thus
\begin{equation}
|x'- y'|\geq \lambda^{(a-1)/2} |x|^{a-1} |x-y|\,.
\end{equation}
\end{exa}
We need the following Proposition concernig the distortion
property of qr mappings.
\begin{prop}[\cite{vu}]\label{p-dist}
Let  $f:\mathbb{B}^n\rightarrow \mathbb{B}^n$  be  K-qr, $f(0)=0$
and $\alpha=K_I(f)^{1/(1-n)}$.  Then  \newline $(i.5)$  $
|f(x)|\leq \varphi_{K,n}(|x|)\leq \lambda_n^{1-\alpha}
|x|^\alpha$. \newline $(i.6)$  If  $g:\mathbb{B}^n\rightarrow
\mathbb{B}^n$  is   K-qc,  $g(0)=0$  and $1/\alpha
=K_I(g^{-1})^{1/(n-1)}$, then   \\
$m |x|^{1/\alpha}\leq |g(x)|$.
\end{prop}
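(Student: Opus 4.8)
The plan is to treat $(i.5)$ as the Schwarz lemma for quasiregular mappings and to obtain $(i.6)$ from it by applying the estimate to the inverse map. For $(i.5)$ the standard route goes through the conformal capacity (the modulus of curve families). Fix $x\in\mathbb{B}^n\setminus\{0\}$ and write $r=|x|$. One compares the capacity of the ring bounded by the segment $[0,x]$ and the unit sphere --- a Gr\"{o}tzsch ring, with capacity $\gamma_n(1/r)$, where $\gamma_n$ is the Gr\"{o}tzsch capacity function --- with the capacity of the image configuration separating $0=f(0)$ and $f(x)$ from the sphere, using the inner-dilatation inequality for the capacity of ring domains under $K$-quasiregular maps. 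This gives $\gamma_n(1/|f(x)|)\ge\gamma_n(1/r)/K_I(f)$, which rearranges, by the monotonicity of $\gamma_n$, to $|f(x)|\le\varphi_{K,n}(r)$, where $\varphi_{K,n}$ is the distortion function built from $\gamma_n$; see \cite{vu}. The power majorant $\varphi_{K,n}(r)\le\lambda_n^{1-\alpha}r^\alpha$, with $\alpha=K_I(f)^{1/(1-n)}$ and $\lambda_n$ the Gr\"{o}tzsch ring constant, is a standard estimate for the capacity function, also from \cite{vu}; it follows from the two-sided comparison of $\gamma_n(s)$ with $\log(\lambda_n s)$.

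For $(i.6)$, observe that $g$ is a homeomorphism of $\mathbb{B}^n$ onto $\mathbb{B}^n$, so $g^{-1}\colon\mathbb{B}^n\to\mathbb{B}^n$ is again $K$-quasiconformal, $g^{-1}(0)=0$, and $K_I(g^{-1})=K_O(g)$. Apply $(i.5)$ to $g^{-1}$: its exponent is $K_I(g^{-1})^{1/(1-n)}$, which by the hypothesis $1/\alpha=K_I(g^{-1})^{1/(n-1)}$ equals $\alpha$, so $|g^{-1}(y)|\le\lambda_n^{1-\alpha}|y|^\alpha$ for every $y\in\mathbb{B}^n$. Putting $y=g(x)$ yields $|x|\le\lambda_n^{1-\alpha}|g(x)|^\alpha$, hence $|g(x)|^\alpha\ge\lambda_n^{\alpha-1}|x|$ and therefore $|g(x)|\ge\lambda_n^{1-1/\alpha}|x|^{1/\alpha}$. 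This is the asserted inequality with $m=\lambda_n^{1-1/\alpha}$; note $0<m\le1$ since $K_I(g^{-1})\ge1$ forces $\alpha\le1$.

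The main obstacle is entirely in $(i.5)$: it packages the modulus-of-curve-families machinery together with the extremal properties of the Gr\"{o}tzsch ring and the fine estimates for $\gamma_n$. Since the proposition is quoted from \cite{vu}, the honest course is to invoke that reference for the Schwarz lemma and for the bound $\varphi_{K,n}(r)\le\lambda_n^{1-\alpha}r^\alpha$, after which $(i.6)$ is the routine one-line substitution carried out above.
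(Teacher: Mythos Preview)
The paper does not supply its own proof of this proposition; it is simply quoted from \cite{vu} and then used as a black box in the proof of Proposition~\ref{p-tom2}. Your outline is exactly the standard derivation one finds in that reference: $(i.5)$ is the quasiregular Schwarz lemma obtained via the $K_I$-inequality for conformal capacity and the Gr\"otzsch ring extremality, together with the known bound $\varphi_{K,n}(r)\le \lambda_n^{1-\alpha}r^{\alpha}$; and $(i.6)$ is the immediate consequence of applying $(i.5)$ to $g^{-1}$ and solving for $|g(x)|$, just as you do, yielding the explicit constant $m=\lambda_n^{1-1/\alpha}$.

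One small point worth flagging: your argument for $(i.6)$ tacitly assumes $g(\mathbb{B}^n)=\mathbb{B}^n$, so that $g^{-1}$ is again a map $\mathbb{B}^n\to\mathbb{B}^n$ to which $(i.5)$ applies directly. The proposition as stated in the paper does not say $g$ is onto. In the paper's only application (Proposition~\ref{p-tom2}) this is harmless, since the estimate is used only locally near the origin and one may rescale; but strictly speaking you should either add the surjectivity hypothesis or insert the routine rescaling step (restrict to a smaller ball contained in $g(\mathbb{B}^n)$ and normalize).
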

Suppose that $g$   is analytic (more generally $C^{(3)}$ at $0$),
$g(0)=0$, $\partial_k g(0)=0$ and   $\partial^2_{ij}g(0)=0$.  Then
$|\partial_k g(x)|\leq M |x|^2$  and  therefore   $|g(x)- g(y)|
\leq M |x|^2 |x-y|$ if $|y| \leq |x|$.  In particular,  $|g(x)|
\leq M |x|^3$.\newline Note that,   if  in addition, $f$  is
$C^{(4)}$ at $0$  and  $\partial^3_{ijk}g(0)=0$, then  $|g(x)-
g(y)| \leq M |x|^3 |x-y|$ if $|y| \leq |x|$ and $f_a +g$  is K-qc
for $a <4$.  In particular,  $|g(x)| \leq M |x|^4$.
\begin{prop}\label{p-tom2}
Suppose that $f$  has partial derivatives up to the order  $3$  at
a point  $x_0 \in \mathbb{R}^n$ and that   $f:  U(x_0)\rightarrow
\mathbb{R}^n$  is  K-qc, where $U(x_0)$  is a neighborhood   of
$x_0$ in   $\mathbb{R}^n$.  If   $\partial_k f(x_0)=0$ and
$\partial^2_{ij}f(x_0)=0$, then  $K_O(f)  \geq 3^{n-1}$  on
$U(x_0)$.
\end{prop}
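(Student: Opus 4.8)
The plan is to show that the hypotheses force $f$ to be ``cubically flat'' along the coordinate direction $e_k$ at $x_0$, precisely as the model radial map $f_3$ of Example \ref{ex2rm} is, and then to read off the dilatation bound from the lower Hölder estimate for quasiconformal mappings. After a translation we may assume $x_0=0$ and $f(0)=0$; since a $K$-qc mapping is open, $0$ is an interior point of $f(U(x_0))$, and it is enough to prove the bound on a small ball $B(0,\rho)\subset U(x_0)$, because restricting to a subdomain only decreases $K_O$.

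First I would use the degeneracy hypotheses only along the single direction $e_k$. Because $\partial_k f(0)=0$ and $\partial^2_{kk}f(0)=0$, and $f$ has partial derivatives up to order $3$ at $0$, the one-variable Taylor expansion of $t\mapsto f(te_k)$ at $t=0$ gives $f(te_k)=\tfrac{t^3}{6}\,\partial^3_{kkk}f(0)+o(t^3)$, so there are $M>0$ and $\rho_1>0$ with
\begin{equation*}
|f(te_k)|\le M\,|t|^{3},\qquad 0<|t|<\rho_1 .
\end{equation*}
(The vanishing of the remaining second derivatives, which by Proposition \ref{p-tom} is in any case automatic from injectivity, is not needed for this step.)

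Second I would invoke the lower Hölder estimate for qc maps. Restricting $f$ to a small ball $B(0,\rho_2)$, inscribing a ball in its image, rescaling both to $\mathbb{B}^n$, and applying Proposition \ref{p-dist} $(i.6)$ to the rescaled map---using that rescalings preserve the dilatations and that $K_I(f^{-1})=K_O(f)$---one obtains a constant $c>0$ and a radius $\rho_3>0$ with
\begin{equation*}
|f(x)|\ge c\,|x|^{\beta},\qquad \beta:=K_O(f)^{1/(n-1)},\qquad |x|<\rho_3 .
\end{equation*}
Evaluating this along $x=te_k$ and chaining with the first estimate yields $c\,|t|^{\beta}\le |f(te_k)|\le M\,|t|^{3}$, hence $|t|^{\beta-3}\le M/c$ for all small $t>0$; letting $t\to 0^{+}$ forces $\beta-3\ge 0$, i.e. $K_O(f)^{1/(n-1)}\ge 3$, that is $K_O(f)\ge 3^{n-1}$.

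The main obstacle is the second step: one must produce the lower bound $|f(x)|\ge c\,|x|^{K_O(f)^{1/(n-1)}}$ with the \emph{correct} exponent and a \emph{strictly positive} constant in a purely local situation, where the image $f(U(x_0))$ is not a ball, so Proposition \ref{p-dist} can only be invoked after a normalization. This is exactly the content of the local Hölder continuity of $K$-quasiconformal homeomorphisms and of their inverses (with Hölder exponent $K_I^{1/(1-n)}$), and it is the positivity of $c$ that makes the exponent comparison go through and hence delivers the sharp constant $3^{n-1}$. Everything else---the Taylor expansion and the elementary comparison of powers of $t$---is routine, and since the bound is attained by $f_3$ it is best possible.
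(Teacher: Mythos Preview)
Your proposal is correct and follows essentially the same route as the paper's proof: a cubic Taylor upper bound at $x_0$ combined with the lower H\"older estimate from Proposition~\ref{p-dist}~$(i.6)$ (after the normalization you describe), and then a comparison of exponents as $t\to 0$. The only cosmetic difference is that you run the Taylor estimate along the single ray $te_k$, whereas the paper uses the full hypothesis to get $|f(x)|\le M|x|^3$ for all $x$; either version feeds into the same exponent comparison, and your added discussion of the rescaling needed to invoke Proposition~\ref{p-dist} makes explicit a step the paper leaves implicit.
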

Example \ref{ex2rm} shows that the result is optimal.
\begin{proof} By the Taylor formula,  there is  $M$   such  $|f(x)|
\leq M |x|^3$.\newline If  $1/\alpha =K_I(g^{-1})^{1/(n-1)}$, then
by Proposition \ref{p-dist},\,   $m |x|^{1/\alpha}\leq |g(x)|\leq
M |x|^3$.   Hence   $K_O^{1/(n-1)} \geq 3$, and therefore  $K_O
\geq 3^{n-1}$, where $K_O(g)=K_I(g^{-1})$.
\end{proof}
\begin{prop}\label{p-tom3}
Suppose that $f$  has continuous partial derivatives up to the
order  $3$  at the origin $0$  and that   $f:  U(0)\rightarrow
\mathbb{R}^n$  is  K-qc, where $U(0)$  is a neighborhood   of $0$
in   $\mathbb{R}^n$. If  $K_O(f) < 3^{n-1}$, then  $J(f,0)\neq 0$.\\
In particular,  if  $g$   is analytic \rm{(}more generally
$C^{(3)}(U(0))$   or $g$  only has partial derivatives up to the
order  $3$\rm{)}, and  if  $g$   is   K-qc   with     $K_O(g) <
3^{n-1}$, then  $J(g,0)\neq 0$.
\end{prop}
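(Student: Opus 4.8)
The plan is to prove the contrapositive: assuming $J(f,0)=0$ I will produce the inequality $K_O(f)\ge 3^{n-1}$, which contradicts the hypothesis. Apart from this bookkeeping the argument is just the composition of the two preceding propositions, so the only thing to check is that their hypotheses become available in turn.

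First I would show that the full first derivative of $f$ vanishes at the origin. Since $f$ has continuous partial derivatives near $0$ it is a $C^1$ mapping there, and for a $C^1$ map the property of being $K$-quasiconformal is equivalent to the \emph{pointwise} distortion inequality $|f'(x)|^n\le K\,|J(f,x)|$ holding at every point (this is the $C^1$ characterization of quasiconformality recalled earlier in the paper). Evaluating at $x=0$ and using $J(f,0)=0$ forces $|f'(0)|=0$, i.e. $\partial_k f(0)=0$ for $k=1,\dots,n$. The pointwise — rather than merely almost-everywhere — validity of this inequality is precisely what the continuity of the derivatives buys, and it is indispensable here: without it the Jacobian could a priori vanish at an isolated point while the derivative there does not, and the proposition would be false.

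Next, since a $K$-quasiconformal mapping is a homeomorphism it is in particular injective, and $f$ has partial derivatives up to order $3$ at $0$; combined with $\partial_k f(0)=0$ this puts us in the setting of Proposition \ref{p-tom} (the Thom splitting lemma), which then yields $\partial^2_{ij}f(0)=0$ for all $i,j$, i.e. $\mathrm{Hess}(f)(0)=[0]$. At this stage all the hypotheses of Proposition \ref{p-tom2} are in force — $f$ has partials up to order $3$ at $0$, is $K$-quasiconformal, and both its first- and second-order partials vanish at $0$ — so that proposition gives $K_O(f)\ge 3^{n-1}$ on $U(0)$, contradicting $K_O(f)<3^{n-1}$. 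Hence $J(f,0)\ne 0$. The ``in particular'' clause requires nothing new: analytic maps and $C^{3}$ maps satisfy the hypotheses used above verbatim, and in the remaining case (only partial derivatives up to order $3$ at $0$) one replaces the regularity step by the boundedness of the metric dilatation of a $K$-quasiconformal map at the point $0$ and runs the same three steps.

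As for the main obstacle: inside the present argument it is the single implication $J(f,0)=0\Rightarrow f'(0)=0$, which is delicate exactly because the conclusion is sharp — the radial model $f_3$ of Example \ref{ex2rm} is $3^{n-1}$-quasiconformal with $J(f_3,0)=0$ and $f_3'(0)=0$, so no weakening of the regularity hypothesis can survive. The genuinely substantive ingredients, the Thom splitting lemma and the extraction of the sharp constant $3^{n-1}$ by comparison with $f_3$ via the distortion estimate of Proposition \ref{p-dist}, are the contents of Propositions \ref{p-tom} and \ref{p-tom2} and are assumed here.
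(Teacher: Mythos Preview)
Your proof is correct and follows the same route as the paper: assume $J(f,0)=0$, use the pointwise distortion inequality of a $C^1$ qc map to deduce $\partial_k f(0)=0$, invoke Proposition~\ref{p-tom} (via injectivity of qc maps) to kill the Hessian, and then Proposition~\ref{p-tom2} for the contradiction $K_O(f)\ge 3^{n-1}$. You supply more justification than the paper does for the step $J(f,0)=0\Rightarrow f'(0)=0$, but the logical skeleton is identical.
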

\begin{proof}
Contrary suppose that $J(f,0)=0$. Since $f$  is   K-qc,
$\partial_k f(x_0)=0$,  hence by  Proposition  \ref{p-tom}  we
find $\partial^2_{ij}f(x_0)=0$.  Now,  by  Proposition
\ref{p-tom2}, $K_O(f)  \geq 3^{n-1}$  and this yields a
contradiction.
\end{proof}
\begin{thm}
If in  addition to hypothesis  \rm{(a)} of  Theorem
\ref{thm.space2}  we suppose that    $\,h\,$ is qc  with  $K_O(h)
< 3^{n-1}$, then $h$ is  co-Lipschitz on  $\mathbb{B}$.
\end{thm}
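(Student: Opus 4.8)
The plan is to combine the normal-family obstruction machinery already developed in this section with the local structure results (Propositions~\ref{p-tom}, \ref{p-tom2}, \ref{p-tom3}) to show that the Jacobian of $h$ cannot degenerate, and then feed this into Theorem~\ref{thm.space2}. First I would observe that since $h$ is harmonic and $K$-qc with $K_O(h) < 3^{n-1}$, it is in particular $C^\infty$ in the interior (harmonic functions are real-analytic), so it has continuous partial derivatives of all orders at every point of $\mathbb{B}$. Hence Proposition~\ref{p-tom3} applies locally around \emph{every} $x_0 \in \mathbb{B}$: the hypothesis $K_O(h) < 3^{n-1}$ forces $J(h,x_0) \neq 0$. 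Thus $J(h)$ has no zeros in $\mathbb{B}$, which is exactly hypothesis ${\rm (b1)}$ in the definitions of $Q_H$ and of the Jacobian non zero normal family-property.

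Next I would verify that $h$ actually satisfies the full Jacobian non zero normal family-property of Definition~\ref{dfn3.1} (equivalently, that $h \in Q_H$ in the sense needed for Theorem~\ref{thm.space2}~${\rm (i.7)}$). Property ${\rm (b1)}$ is the no-zeros statement just established. For ${\rm (b2)}$, given a sequence $x_n \to x_0 \in \partial\mathbb{B}$, the rescaled maps $f_n(x) = \frac{1}{d^*_n} h(d_n x + x_n)$ are again harmonic and $K$-qc with the same $K_O < 3^{n-1}$, and by the Astala--Gehring type estimate (Lemma~\ref{lem.AG0}, Theorem~\ref{aaaM1}) together with ${\rm (i.1)}$ of Theorem~\ref{thm.space1} they are suitably normalized, so they form a normal family by the standard compactness criterion for $K$-qc mappings (as used in the proof of Lemma~\ref{lhall1}). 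For ${\rm (b3)}$, any uniform limit $f_0$ of such a subsequence is again harmonic and $K$-qc with $K_O(f_0) \le \liminf K_O(f_n) < 3^{n-1}$, hence $C^\infty$ with $J(f_0) \neq 0$ everywhere by Proposition~\ref{p-tom3} applied to $f_0$. This closes the verification.

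With $h$ (and its rescaled limits) known to have non-vanishing Jacobian, Theorem~\ref{thm.space2}~${\rm (i.7)}$ applies directly: $h$ is co-Lipschitz on $\mathbb{B}$. Concretely, unwinding that proof: if $|h'_r(x_n)| \to 0$ along some sequence, one uses Theorems~\ref{thm.space1} and \ref{aaaM1} to find $y_n \in \underline{B}(x_n)$ with $|h'_r(y_n)| \succeq c$, rescales by $h_n(x) = \frac{1}{d^*_n} h(d_n x + x_n)$, extracts a limit $f_0$ with $J(f_0) = 0$ somewhere (from the vanishing radial derivative at the rescaled point), contradicting the no-zero-Jacobian property just proved. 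Hence $\lambda_{h'}(x) \succeq c$ on $\mathbb{B}$, which gives the co-Lipschitz bound.

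The main obstacle I anticipate is the \textbf{normal-family step} ${\rm (b2)}$--${\rm (b3)}$: one must check that the rescalings $f_n$ stay in a compact family and that the limiting map is genuinely $C^1$ (indeed harmonic) with the Jacobian hypotheses preserved, rather than degenerating at the boundary of the rescaled domains. This requires the interior estimate ${\rm (i.1)}$ of Theorem~\ref{thm.space1} to control $d^*_n = d(h(x_n))$ against $d_n = 1 - |x_n|$ from below via convexity of $D$ (so that $d^*_n$ does not collapse faster than $d_n$, keeping the normalization non-degenerate), plus the fact that $K$-qc harmonic mappings are Lipschitz (Kalaj's result cited after Theorem~\ref{thm.space1}) to get equicontinuity. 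Once those inputs are in place the argument is a routine repetition of the proof of Lemma~\ref{lhall1} and of Theorem~\ref{thm.space2}~${\rm (i.7)}$, so I do not expect further difficulty.
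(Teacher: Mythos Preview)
Your proposal is correct and follows essentially the same route as the paper: the paper's own proof simply states that ``The proof follows from Theorem~\ref{thm.space2} and Proposition~\ref{p-tom3}. We leave the details to the interested reader,'' and what you have written is exactly those details --- using Proposition~\ref{p-tom3} to secure the non-vanishing of the Jacobian both for $h$ and for its rescaled limits, and then invoking Theorem~\ref{thm.space2}~${\rm (i.7)}$.
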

\begin{proof} The proof follows from Theorem \ref{thm.space2} and  Proposition \ref{p-tom3}.  We leave the details to the interested reader.
\end{proof}

\section{The Lower Bound of the Jacobian in $\mathbb{R}^3$}\label{grad3}
It seems  a natural project  to   generalize and develop the arguments used in planar theory of
harmonic mappings to  harmonic functions in
domains of $\mathbb{R}^n$,  $\geq 3$.
In Section \ref{sec.pl} we used  the fact that  every   complex
harmonic function $h$ on simple connected planar domain, can be
written   in the form    $h=f+\overline{g}$, where $f$ and $g$ are
holomorphic that  $|f'| $  satisfies minimum principle.
There is no appropriate analogy of this result in space.   The next  example shows that the minimum principle  does not hold  for modulus of  vector valued harmonic mapping.
\begin{exa}
Define  $h(M)=  (x,y, x^2 +y^2-1 - 2z^2)$,  where  $M=(x,y,z)$,
$f_0(x,y)=( x,y,x^2+y^2 -1)$ and  $G=\{(x,y,z): z < x^2 + y^2
-1\}$. Then  $J(h)= -4z$, the restriction of $h$ on xy-plane is
$f_0$,   $\Gamma_{f_0} =\partial G$  and  $h(\mathbb{R}^3)=G$.
Let $d(M)= |OM|$.
Then $d$  attains minimum on
$\Gamma_{f_0} $ at some point $M_1$  and there is a  point
$M_0= (x_0,y_0,0)$ such that $f_0(M_0)=M_1=h(M_0)$, $h$ maps
$\mathbb{R}^3$ onto $G$ and therefore $|h|$ attains minimum at  $M_0$.
\end{exa}

If $h$ is a  harmonic
mapping from a domain in $\mathbb{R}^{n}$   to $\mathbb{R}^n$,
then   $|h'_{x_k}| $  is  subharmonic,   but it does not satisfy  minimum principle  in general (adapt  the above example to the dimension $n\geq 3$).

In fact,  Lewy's theorem is false
in dimensions higer than two (see \cite{dur}  p. 25-27 for Wood's
counterexample).

Consider the polynomial map from   $\mathbb{R}^3$  to
$\mathbb{R}^3$  defined by  $h(x,y,z)=(u,v,w)$, where

$$u = x^3 -3xz^2 +  yz,\quad v=y-3xz, \quad  w=z\,. $$
a calculation shows that $h$  has the Jacobian
$$ J_h(x,y,z)= 3x^2,$$
which vanishes on the plane $x=0$.
Jacobian of $C^1$  orientation preserving mapping $f$ is
nonnegative.
Iwaniec \cite{Iw14} suggest a project (for students): Generalize
and develop the arguments used in planar theory of harmonic
mappings to gradient mappings of harmonic functions in domains of
$\mathbb{R}^3$.  Recall that, in planar theory of   harmonic
mappings we used

$\rm{(I0)}$ version of Hall   and

$\rm{(II.0)}$ If analytic function does not vanish then its
modulus satisfies minimum principle.

For harmonic   gradient mapping  in $3$-space    Proposition
\ref{pro3.1} and Theorem \ref{t.gl} are analogy of $\rm{(I0)}$ and
$\rm{(II.0)}$  respectively.


For $n = 3$, Lewy proved that the Hessian of a harmonic function
(the determinant of its matrix of second derivatives) cannot
vanish at an interior point of its domain without changing sign,
unless it vanishes identically. More precisely, if the Hessian
vanishes at some interior point $x_0$ without vanishing
identically, then in each neighborhood of $x_0$  it must take both
positive and negative values. But the Jacobian of a harmonic
mapping $f = {\rm grad}\,u$  is the Hessian of $u$.

As a consequence, the Jacobian of a locally univalent harmonic
gradient mapping from $\mathbb{R}^{3}$   to $\mathbb{R}^{3}$
cannot vanish at any interior point of its domain.  Gleason and
Wolff \cite{gl}   generalized this result to $\mathbb{R}^{n}$.

Throughout this text the subscripts with variables $x, y$  and $z$
designate partial derivatives.

A vector field  $F = (f^1, f^2, f^3)$ is said to satisfy the
Cauchy-Riemann equations (CR-equations, for short) if its
coordinates, (conjugate harmonic functions) satisfy:

$f^1_y=f^2_x,\, f^1_z=f^3_x,\, f^2_z=f^3_y,\,f^1_x+
f^2_y+f^3_z=0$, locally    $F= {\rm grad} \phi  $  and  $\triangle
\phi =0 $.

Equivalently, the Jacobian matrix of $F$  is symmetric and has
trace $0$.

\begin{thm}[Lewy-Gleason-Wolff,\cite{gl}] \label{t.gl}   Logarithm of modulus of the Hessian of a harmonic function
in a domain $\Omega \subset  \mathbb{R}^3$ is superharmonic
outside its zeros. Precisely, $\Delta \ln |H|\leq 0 $,  wherever
$H\neq 0$.
\end{thm}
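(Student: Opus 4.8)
The plan is to work locally near a point $x_0$ where $H(x_0)\neq 0$, write $\phi$ for the harmonic function with $F={\rm grad}\,\phi$ and $H=\det D^2\phi$ for its Hessian determinant, and compute $\Delta\ln|H|$ explicitly, exhibiting it as a manifestly nonpositive quantity. The strategy parallels the planar identity recorded in (I7), where $-(\ln J)_{z\bar z}J^2=|B+C|^2$ forces $\ln J$ to be superharmonic off the zero set of $J$: we seek the analogous representation $-\Delta\ln|H|=Q/H^2$ with $Q\geq 0$ a sum of squares built from third derivatives of $\phi$. First I would fix coordinates adapted to $x_0$: since $D^2\phi(x_0)$ is symmetric and has trace zero (the CR-equations for a gradient field of a harmonic function), after an orthogonal change of variables we may assume $D^2\phi(x_0)={\rm diag}(\lambda_1,\lambda_2,\lambda_3)$ with $\lambda_1+\lambda_2+\lambda_3=0$ and $H(x_0)=\lambda_1\lambda_2\lambda_3\neq 0$.

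The key computational steps: (1) Differentiate $\ln|H|$ twice. Writing $H=\det(\phi_{ij})$ and using Jacobi's formula, $\partial_k\ln|H|={\rm tr}(A\,\partial_k(D^2\phi))$ where $A=(\phi_{ij})^{-1}$; a second differentiation gives $\Delta\ln|H|=\sum_k\big[{\rm tr}(A\,\partial_k^2 D^2\phi)-{\rm tr}(A(\partial_k D^2\phi)A(\partial_k D^2\phi))\big]$. (2) Exploit harmonicity: $\Delta\phi=0$ implies $\sum_i\phi_{iijk}=0$ for all $j,k$, so the ${\rm tr}(A\,\partial_k^2 D^2\phi)$-term, being a contraction of $A$ against a fourth-order tensor whose relevant traces vanish, collapses — this is the step where $n=3$ and the trace-free structure must be used carefully, since the surviving terms need to be matched against the quadratic term. (3) What remains is a negative-definite-looking quadratic form in the third derivatives $\phi_{ijk}$ with coefficient matrix built from $A=(D^2\phi)^{-1}$; one rewrites it as $-Q/H^2$ where $Q$ is a genuine sum of squares. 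Concretely, in the diagonalizing frame at $x_0$ one should find something of the shape $-\Delta\ln|H|(x_0)=\sum (\text{coeff})\,\phi_{ijk}^2$ with all coefficients $\geq 0$ after imposing the linear constraints $\sum_i\phi_{iijk}=0$; then the frame-independent statement follows because $\Delta$ and $|H|$ are rotation-invariant and $x_0$ was arbitrary in the open set $\{H\neq 0\}$.

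I expect the main obstacle to be step (3): verifying that the quadratic form in the nine independent third-order partials $\phi_{ijk}$ (reduced by the three harmonicity constraints $\phi_{11k}+\phi_{22k}+\phi_{33k}=0$, $k=1,2,3$) is positive semidefinite is not a one-line matter — it is precisely the content of the Lewy--Gleason--Wolff theorem and requires either an honest diagonalization of a $6\times 6$-ish form or a clever grouping into squares using the constraints. A clean route is to pass to the variables $p_k=\phi_{11k}$, $q_k=\phi_{22k}$, $r_k=\phi_{33k}=-p_k-q_k$ together with the fully off-diagonal $\phi_{123}$ and the mixed ones $\phi_{ijk}$ with exactly two equal indices, substitute into the expression from step (2), and check term by term that the result is a nonnegative combination of squares; the trace-free condition $\lambda_1+\lambda_2+\lambda_3=0$ is what makes the cross terms cancel. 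Once the sum-of-squares representation $-\Delta\ln|H|=Q/H^2\geq 0$ is in hand, the theorem is immediate, and as noted after the statement this yields the minimum principle for $H$ exactly as (I1)--(I2) do in the plane, giving the space analogue of $\rm{(II.0)}$.
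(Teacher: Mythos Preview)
The paper does not prove this theorem; it is quoted from Gleason--Wolff \cite{gl} and used as a black box (its role is exactly the minimum-principle analogue $\rm{(II.0)}$ you describe at the end). So there is no in-paper argument to compare against.

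Your outline follows the standard route to the Gleason--Wolff computation and is correct in shape, but one point needs correction and one step is genuinely incomplete. In step (2), the term $\sum_k{\rm tr}(A\,\partial_k^2 D^2\phi)={\rm tr}\big(A\,\Delta(D^2\phi)\big)$ vanishes outright in \emph{every} dimension, simply because each entry $\phi_{ij}$ of $D^2\phi$ is harmonic; no use of $n=3$ or of the trace-free structure is needed here. The dimension restriction enters only in step (3). There one is left with
\[
\Delta\ln|H|=-\sum_k{\rm tr}\big(A(\partial_k D^2\phi)A(\partial_k D^2\phi)\big),
\]
and since $A=(D^2\phi)^{-1}$ is indefinite (the eigenvalues $\lambda_i$ sum to zero), the summands ${\rm tr}(AB_kAB_k)$ are not individually of one sign. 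Your proposal acknowledges this is the crux but does not carry it out; ``check term by term'' and ``the trace-free condition is what makes the cross terms cancel'' are assertions, not a proof. That this step is genuinely delicate is confirmed by the Remark following the theorem: the inequality fails for $n\geq 4$, so any argument that does not visibly exploit $n=3$ at this point cannot be complete. To finish you must actually write out the quadratic form in the constrained third partials at a diagonalizing frame and exhibit the sum-of-squares decomposition --- which is precisely the substance of \cite{gl}.
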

Remark.  This inequality fails in dimensions greater than $3$.
Obviously, it holds (as equality) for planar harmonic functions.

\begin{prop} \label{c.hess}  Suppose Hessian determinant $H$ of a harmonic
function in a domain   $\Omega \subset  \mathbb{R}^3$   is
positive.  Then for every compact  $F \subset \Omega$
 we have  $\inf_ F H \geq inf _{\partial F} H$.

In particular,  if   $f$ is  injective  harmonic gradient mapping,
we have $\inf_ F J(f) \geq inf _{\partial F} J(f)$.
\end{prop}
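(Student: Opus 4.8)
The plan is to read off the statement from Theorem~\ref{t.gl}. Write $H$ for the Hessian determinant of the harmonic function in question, say $H=\det\mathrm{Hess}(u)$ with $u$ harmonic on $\Omega$; by hypothesis $H>0$ on $\Omega$, so $\ln H$ is a well-defined continuous function there. Theorem~\ref{t.gl} (Lewy--Gleason--Wolff) asserts precisely that $\ln|H|$ is superharmonic wherever $H\neq0$; since $H=|H|>0$ on all of $\Omega$, this means $\ln H$ is superharmonic on $\Omega$. Thus the whole Proposition is reduced to the minimum principle for superharmonic functions.

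Next I would apply that minimum principle. Let $F\subset\Omega$ be compact; we may assume $F\neq\emptyset$, and then $\partial F\neq\emptyset$, so $m:=\inf_{\partial F}H$ is well-defined and $m>0$ (positivity because $H>0$ on the compact set $\partial F\subset\Omega$). Assume for contradiction that $H(x_0)<m$ for some $x_0\in F$; then necessarily $x_0$ lies in the interior of $F$. Let $U$ be the connected component of the interior of $F$ containing $x_0$. Then $U$ is a bounded domain, $\overline U\subset F$, and $\partial U\subset\partial F$ (the components of the open set $\mathrm{int}\,F$ are open, so $\partial U$ cannot meet $\mathrm{int}\,F$). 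Since $\ln H$ is superharmonic on a neighbourhood of $\overline U$, its minimum over $\overline U$ is attained on $\partial U$, whence $\ln H\geq\min_{\partial U}\ln H\geq\ln m$ on $\overline U$, i.e.\ $H\geq m$ on $\overline U$, contradicting $H(x_0)<m$. Therefore $\inf_F H\geq\inf_{\partial F}H$ (in fact equality, the minimum being attained on $\partial F$).

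For the ``in particular'' clause, let $f=\nabla u$ be an injective harmonic gradient mapping on $\Omega$, so that $J(f)$ is exactly the Hessian determinant $H$ of the harmonic function $u$. By Lewy's theorem in $\mathbb{R}^3$ (more generally Lewy--Gleason--Wolff, cf.\ the discussion preceding Theorem~\ref{t.gl}), local univalence of $f$ forces $J(f)=H$ to be nowhere zero on $\Omega$; as $\Omega$ is connected and $H$ is continuous, $H$ has constant sign, which in the orientation-preserving case is positive. The inequality $\inf_F J(f)\geq\inf_{\partial F}J(f)$ is then the first part of the Proposition applied to $u$.

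I do not expect a genuine obstacle. The two points to be a little careful about are both routine: reducing a possibly disconnected compact $F$ with nonempty interior to a bounded domain via connected components (done above), and the sign normalisation in the gradient-mapping case --- it is orientation preservation that makes $J(f)=H>0$, and hence makes the statement a \emph{minimum} principle rather than a maximum principle, the analogous bound with $\sup$ and $<0$ being false.
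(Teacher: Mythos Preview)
Your proof is correct and follows exactly the route the paper intends: the Proposition is stated immediately after Theorem~\ref{t.gl} (Lewy--Gleason--Wolff) as its direct consequence, and the paper offers no further argument beyond that juxtaposition. Your explicit reduction to the minimum principle for the superharmonic function $\ln H$, together with the careful handling of the compact $F$ via connected components and your remark on the orientation-preserving normalisation in the gradient-mapping case, spells out precisely what the paper leaves implicit.
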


Using a normal family argument  one can prove  (see subsection
\ref{ss.hall} for details):
\begin{prop}\label{pro3.1}\cite{mm.unpub2}
Suppose that $G$ and $G^{\prime }$ are domains in $\mathbb{R}^{3}$: If $%
f:G\rightarrow G^{\prime }$ is  injective   harmonic K-qc gradient
mapping, then $f$ has   weak $H$-property.
\end{prop}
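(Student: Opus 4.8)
The plan is to run a compactness (normal family) argument of exactly the type used in Lemma \ref{lhall1}, transplanted to the gradient setting. Recall the statement of weak $H$-property (Definition \ref{dfn.wH}(a')): we must produce a single constant $c>0$, depending only on $K$ and the dimension $3$, such that whenever $f:G\to G'$ is an injective harmonic $K$-qc gradient mapping and $B(fx,R)\subset f(\underline{B}(x,r))$ (the half-sized ball), then $r\,\Lambda_f(x)\ge cR$. By rescaling — replacing $f$ by $y\mapsto \tfrac{1}{R}\bigl(f(x+ry)-f(x)\bigr)$, which is again an injective harmonic $K$-qc gradient mapping — it suffices to prove: there is $c>0$ so that every injective harmonic $K$-qc gradient mapping $f:\mathbb{B}^3\to\mathbb{R}^3$ with $f(0)=0$ and $\mathbb{B}^3\subset f(\underline{B}(0,1/2))$, wait — rather, one normalizes so that $f(\tfrac12\mathbb{B})\supset\mathbb{B}$ and $f(0)=0$, and then concludes $\Lambda_f(0)\ge c$.

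First I would set up the relevant family $F_K$: injective harmonic $K$-qc gradient mappings $f:\mathbb{B}^3\to\mathbb{R}^3$ with $f(0)=0$ and $f(\tfrac12\mathbb{B})\supset\mathbb{B}$. The key closure fact is that this family is normal and that limits stay in it. Normality follows because $K$-qc mappings with a uniform codomain constraint form a normal family (Montel-type theorem for quasiconformal/quasiregular mappings), and the harmonic gradient structure passes to locally uniform limits: a locally uniform limit of harmonic functions is harmonic, a locally uniform limit of gradients of harmonic functions is again such a gradient (differentiate through the limit using interior estimates for harmonic functions), and by Proposition \ref{p-tom3} (or the Lewy--Gleason--Wolff result, Theorem \ref{t.gl}, via Proposition \ref{c.hess}) a nonconstant $K$-qc harmonic gradient mapping with $K_O<3^{n-1}$ has nonvanishing Jacobian, so the limit is genuinely locally injective and hence, being $K$-qc, injective. (One should be a little careful here: the hypothesis does not literally bound $K_O$ below $3^{n-1}$, but the limit cannot be constant because it still must satisfy $f_0(\tfrac12\overline{\mathbb{B}})\supset\mathbb{B}$, so its degree forces $J(f_0)\not\equiv 0$, and then the minimum principle for the Hessian, Proposition \ref{c.hess}, keeps $J$ from vanishing.)

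Then I would argue by contradiction exactly as in Lemma \ref{lhall1}: if no such $c$ existed, there would be a sequence $f_n\in F_K$ with $\Lambda_{f_n}(0)\to 0$. By normality, pass to a subsequence converging locally uniformly to some $f_0\in F_K$; then $\Lambda_{f_0}(0)=0$, i.e. $Df_0(0)=0$, so all first partials of $f_0$ vanish at $0$. Since $f_0=\operatorname{grad}\phi$ for a harmonic $\phi$, this says the Hessian of $\phi$ vanishes at $0$; by Proposition \ref{p-tom} (Thom splitting, using injectivity) the second partials of $f_0$ vanish at $0$ as well, and then Proposition \ref{p-tom2} gives $K_O(f_0)\ge 3^{n-1}$ near $0$ — or, more directly, $J(f_0,0)=0$, contradicting that $f_0$, being an injective harmonic gradient mapping, has nonvanishing Jacobian (Proposition \ref{c.hess}/Theorem \ref{t.gl}). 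This contradiction establishes the uniform lower bound, hence the weak $H$-property.

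The main obstacle I expect is the closure/normality step, not the contradiction step: one must verify that the chosen family $F_K$ really is closed under locally uniform limits within the class of injective harmonic $K$-qc gradient mappings — in particular that the limit is nonconstant and that its Jacobian does not degenerate on passage to the limit. Once the Lewy--Gleason--Wolff minimum principle (Proposition \ref{c.hess}) is available to rule out interior zeros of $J$, and once one records that the half-ball normalization $B(fx,R)\subset f(\underline{B}(x,r))$ forces the limit to cover a fixed ball (hence be nonconstant of nonzero degree), the rest is the routine normal-family contradiction already used for the planar Hall lemma.
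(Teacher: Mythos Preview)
Your proposal is correct and follows essentially the same approach as the paper: the paper's argument (see subsection~\ref{ss.hall}, steps (A1), (A3), (A5)) is precisely the normal-family contradiction of Lemma~\ref{lhall1}, with closure of the family of injective harmonic $K$-qc gradient maps guaranteed by the Lewy--Gleason--Wolff result (Theorem~\ref{t.gl}/Proposition~\ref{c.hess}) that the Jacobian of such a map in $\mathbb{R}^3$ cannot vanish. Your write-up is in fact considerably more explicit than the paper's sketch, and you correctly identify that the delicate point is the closure step---that the limit is nonconstant and remains an injective harmonic gradient map with nonvanishing Jacobian---rather than the contradiction itself.
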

A more general result will appear in a forthcoming paper.

\begin{prop}
Suppose that

$(a1)$:   $f$ is  injective  harmonic gradient mapping from
$\mathbb{B}$ onto $D\subset \mathbb{R}^{3}$  and

$(b1)$:  partial derivatives of  $f$ have  continuous extension to
$\overline{\mathbb{B}}$

$(i.1)$  If $J_h \geq j_0>0 $ on $\mathbb{S}^2$, then   $J_h \geq j_0$  on
$\overline{\mathbb{B}}$.

$(i.2)$    If  in addition $f$ is K-qc,    then  $f:  \mathbb{B} \rightarrow
D$ is bi-Lipschitz.
\end{prop}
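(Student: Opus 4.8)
The statement to prove is the last Proposition: under $(a1)$ ($f$ an injective harmonic gradient mapping $\mathbb{B}\to D\subset\mathbb{R}^3$) and $(b1)$ (partial derivatives of $f$ extend continuously to $\overline{\mathbb{B}}$), we have $(i.1)$ if $J_h\ge j_0>0$ on $\mathbb{S}^2$ then $J_h\ge j_0$ on $\overline{\mathbb{B}}$, and $(i.2)$ if in addition $f$ is $K$-qc then $f$ is bi-Lipschitz. The plan is to feed the hypotheses into the machinery already assembled in Section~\ref{grad3}. For $(i.1)$, since $f$ is an injective harmonic gradient mapping, $f={\rm grad}\,\phi$ for a harmonic $\phi$, and $J_f$ is the Hessian determinant $H$ of $\phi$; because $f$ is injective and orientation-preserving, $J_f\ge 0$, and in fact $J_f>0$ in the interior by the Lewy--Gleason--Wolff phenomenon (Theorem~\ref{t.gl} together with Proposition~\ref{c.hess}: $H$ cannot have an interior zero without changing sign). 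So Proposition~\ref{c.hess} applies: for every compact $F\subset\mathbb{B}$, $\inf_F J_f\ge\inf_{\partial F}J_f$. Now I would take an exhaustion $F_s=\overline{B(0,s)}$, $s\uparrow 1$; the minimum principle gives $\inf_{F_s}J_f\ge\inf_{|x|=s}J_f$, and letting $s\to 1$ and using $(b1)$ (so $J_f$ is continuous on $\overline{\mathbb{B}}$ and $J_f(sx)\to J_f(x)$ uniformly on $\mathbb{S}^2$) yields $\inf_{\mathbb{B}}J_f\ge\inf_{\mathbb{S}^2}J_f\ge j_0$. Continuity then extends the bound to $\overline{\mathbb{B}}$.

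For $(i.2)$, with $f$ additionally $K$-qc, the $K$-qc inequality (1) gives $|f'(x)|^n\le K\,|J(x,f)|$ and $|J(x,f)|\le K\,\ell(f'(x))^n$; here $n=3$. From $(i.1)$, $J_f\ge j_0>0$ on $\overline{\mathbb{B}}$, hence $\ell(f'(x))\ge (j_0/K)^{1/3}$ everywhere, so $f$ is co-Lipschitz. On the other side, $(b1)$ gives $|f'(x)|$ bounded on $\overline{\mathbb{B}}$ (continuous on a compact set), so $f$ is Lipschitz. Combining, $f$ is bi-Lipschitz on $\mathbb{B}$ (and on $\overline{\mathbb{B}}$), which is the claim. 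Since $D=f(\mathbb{B})$ is then the bi-Lipschitz image of $\mathbb{B}$, $f^{-1}$ is Lipschitz on $D$ as well.

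\textbf{Main obstacle.} The one genuinely delicate point is justifying that the minimum principle (Proposition~\ref{c.hess}) can be pushed all the way to the boundary. Proposition~\ref{c.hess} is stated for compact subsets of the open domain $\Omega$, so strictly I only get $\inf_{B(0,s)}J_f\ge\inf_{|x|=s}J_f$ for each $s<1$; the passage $s\to 1$ requires exactly the continuity-up-to-the-boundary hypothesis $(b1)$ to conclude $\inf_{|x|=s}J_f\to\inf_{\mathbb{S}^2}J_f$. I would also want to double-check the positivity of $J_f$ in the interior: injectivity of a $C^1$ gradient mapping forces $J_f$ to not change sign, and orientation-preserving (which we may take as part of the setup, as elsewhere in the paper) gives $J_f\ge 0$; then Theorem~\ref{t.gl}/Proposition~\ref{c.hess} rules out interior zeros once $J_f\not\equiv 0$, and $J_f\not\equiv 0$ because $f$ is injective. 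Everything else is routine once these two points are in place, so I would state the exhaustion argument carefully and leave the qc bookkeeping in $(i.2)$ to the reader.
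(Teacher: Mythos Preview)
Your proposal is correct and follows the same route as the paper. The paper's own proof of this Proposition is the one-line remark ``$(i.1)$, $(i.2)$ and $(ii.1)$ are corollary of Proposition~\ref{c.hess}'' (the Hessian minimum principle); you have simply written out the exhaustion argument $s\uparrow 1$ using $(b1)$ and the $K$-qc bookkeeping that the paper leaves implicit.
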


\begin{thm} \label{3clip}
Suppose that

$(a1)$  $f$ is continuous  on  $\overline{\mathbb{B}}$,   and
univalent harmonic K-qc gradient mapping from $\mathbb{B}$ onto
convex domain  $D\subset \mathbb{R}^{3}$  and

$(b1)$   partial derivatives of  $f$ have  continuous extension to
$\overline{\mathbb{B}}$.

$(ii.1)$  Then  $f:  \mathbb{B} \rightarrow D$ is bi-Lipschitz   and in
particular  $f^{-1}: D \rightarrow \mathbb{B}$  is L-Lipschitz.

$(ii.2)$ If $f$ is  injective  harmonic K-qc gradient mapping from
$\mathbb{B}$ onto convex bounded domain  $D\subset
\mathbb{R}^{3}$, then $f$ is  co-Lipschitz.
\end{thm}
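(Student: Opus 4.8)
\textbf{Proof proposal for Theorem \ref{3clip}.}

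The plan is to run the same machinery as in the planar Theorem \ref{1clip} and the $n$-dimensional Theorem \ref{thm.space1}, now using the space analogues that were assembled in Section \ref{grad3}: the Lewy--Gleason--Wolff theorem (Theorem \ref{t.gl}) and its corollary the minimum principle for the Hessian (Proposition \ref{c.hess}), together with the weak $H$-property of injective harmonic $K$-qc gradient mappings (Proposition \ref{pro3.1}). First I would treat $(ii.1)$. Since $f$ is continuous on $\overline{\mathbb{B}}$, univalent, and $D$ is convex and bounded, apply Theorem \ref{thm.space1} part $(i.1)$: it gives $d(f(x),\partial D)\geq (1-|x|)\,\overline{c}_3 R_0$, i.e.\ $d_*(f(x))\succeq d(x)$ on $\mathbb{B}$. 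By Proposition \ref{pro3.1} the map $f$ has the weak $H$-property, so by Definition \ref{dfn.wH}(a') there is $c>0$ with $d(x)\Lambda_f(x)\geq c\,d_*(f(x))$ for all $x\in\mathbb{B}$ (this is exactly the implication recorded just after Lemma \ref{lhall1}). Now here is the one genuinely new input needed in the convex case: since $D$ is convex, for a point $z'\in\mathbb{B}$ and its Euclidean distance $d'=d(z',\partial\mathbb{B})$ we compare $d_*$ with $d'$. Because $f$ maps onto the convex set $D$ and $f(0)$ can be taken at the center (or one uses Theorem \ref{aaaM1} on the qc distortion), convexity yields $d_*(f(x))\succeq d(x)$ from below while the upper comparison $d(x)\Lambda_f(x)\succeq d_*(f(x))$ combined with boundedness of $\Lambda_f$ (the partial derivatives have continuous extension to $\overline{\mathbb{B}}$ by $(b1)$, hence $\Lambda_f\preceq 1$) forces $d_*(f(x))\asymp d(x)$. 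Feeding $d_*\succeq d$ back into $d\,\Lambda_f\succeq d_*$ cancels the distance factors and gives $\Lambda_f\succeq c$ uniformly on $\mathbb{B}$, so $f$ is co-Lipschitz; since $f$ is $K$-qc, $\lambda_f\geq \Lambda_f/K\succeq c$, hence $J_f\geq \lambda_f^3\succeq c>0$, which with the boundedness of $f'$ from $(b1)$ makes $f$ bi-Lipschitz and $f^{-1}$ $L$-Lipschitz on $D$.

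For $(ii.2)$ the point is that the continuity-on-$\overline{\mathbb{B}}$ hypothesis is dropped, so one cannot read off a uniform bound on $\partial\mathbb{B}$ directly; instead I would argue by the normal-family/contradiction scheme exactly as in the proof of Lemma \ref{lhall1} and Theorem \ref{thm.space2}$(i.7)$. Suppose not: there is a sequence $x_n\in\mathbb{B}$ with $\Lambda_f(x_n)\to 0$ (equivalently, by qc, $\lambda_f(x_n)\to 0$). Set $d_n=d(x_n)$, $d^*_n=d(f(x_n))$, and form the rescaled maps $f_n(x)=\frac{1}{d^*_n}\,f(d_n x + x_n)$. Each $f_n$ is again an injective harmonic $K$-qc gradient mapping (the gradient structure and the CR-equations are preserved under affine rescaling of domain and range, and $K$-quasiconformality is affine-invariant), onto a domain containing a fixed ball of definite size after normalization, so by the normality criterion the $f_n$ form a normal family and a subsequence converges locally uniformly to a limit $f_0$ which is still an injective harmonic $K$-qc gradient mapping with $J(f_0)$ having no zeros (this is the content of Proposition \ref{pro3.1}'s hypotheses / the $Q_H$-type closure). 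On the other hand the scaling is arranged so that $\Lambda_{f_0}$ vanishes at the limit point while, by Theorem \ref{thm.space1} combined with Theorem \ref{aaaM1} applied to $f$, one has $\underline{J}(x_n)\succeq d^*_n/d_n$ bounded below, so there are nearby points $y_n\in\underline{B}(x_n)$ with $\Lambda_f(y_n)\succeq c$; after rescaling these survive in the limit and give $\Lambda_{f_0}\succeq c$ at a nearby point, contradicting $J(f_0)>0$ together with the minimum principle of Proposition \ref{c.hess} (which forces $J(f_0)$, hence $\Lambda_{f_0}$, to be bounded below by its infimum over a suitable compact, which we have arranged to be $0$). This contradiction establishes a uniform lower bound $\Lambda_f\succeq c$ on $\mathbb{B}$, hence $\lambda_f\succeq c/K$ and $f$ is co-Lipschitz.

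The main obstacle is the normal-family step in $(ii.2)$: one must check carefully that the rescaled maps $f_n$ stay inside the class to which Proposition \ref{pro3.1} (and the normality criterion behind it) applies — in particular that the gradient/CR structure and the no-zeros-of-Jacobian property pass to the limit $f_0$ — and that the normalizations $d_n, d^*_n$ are chosen so that the Koebe-type two-sided estimate $d^*_n\asymp d_n \Lambda_f(x_n)$ (from the weak $H$-property together with Theorem \ref{aaaM1}) makes the limiting geometry non-degenerate. The subtlety specific to space is that, unlike the planar case where Hall's lemma is a clean quantitative statement, here the lower bound comes only through the qualitative Lewy--Gleason--Wolff minimum principle, so every limiting configuration must be genuinely a harmonic gradient mapping for Proposition \ref{c.hess} to apply; handling the possibility that $x_n$ approaches $\partial\mathbb{B}$ versus stays in a compact subset may require splitting into cases, and in the boundary case one invokes condition (b3) of the definition of $Q_H$ (or the analogous closure hypothesis) to guarantee the rescaled sequence is again in the admissible family. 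Once this is in place, the distance-cancellation trick and the qc inequality finish the proof routinely.
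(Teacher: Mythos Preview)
Your argument is essentially correct, but it is organized differently from the paper and in one place does more work than necessary.

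For $(ii.1)$ the paper takes the short route through the minimum principle: by hypothesis $(b1)$ the Jacobian extends continuously to $\overline{\mathbb{B}}$, one bounds it from below on $\mathbb{S}^2$ (via the radial-derivative estimate of Theorem~\ref{thm.space1}), and then Proposition~\ref{c.hess} (Lewy--Gleason--Wolff) pushes the lower bound into the interior; bi-Lipschitz follows. You instead run the weak $H$-property argument: $d_*\succeq d$ from convexity, $d\,\Lambda_f\succeq d_*$ from Proposition~\ref{pro3.1}, cancel to get $\Lambda_f\succeq c$, and finish with the $K$-qc inequality. This is exactly the paper's argument for $(ii.2)$ (the steps (A1)--(A4) in subsection~\ref{ss.hall}), so you have in effect swapped the two proofs. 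Both are valid; your route has the advantage that it does not rely on the superharmonicity of $\ln|H|$, only on the non-vanishing of the Hessian for injective harmonic gradient maps.

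The main redundancy is in your treatment of $(ii.2)$. Notice that your own co-Lipschitz argument for $(ii.1)$ never used hypothesis $(b1)$: the detour through ``$\Lambda_f\preceq 1$ forces $d_*\asymp d$'' is superfluous, since $d_*\succeq d$ and $d\,\Lambda_f\succeq d_*$ already give $\Lambda_f\succeq c$ directly. So that same three-line argument already proves $(ii.2)$, and the separate normal-family contradiction you set up is not needed. (This is precisely what the paper does: it cites Proposition~\ref{pro3.1} plus Theorem~\ref{thm.space1}\,$(i.3)$ and is done.) In the normal-family argument you do sketch, the contradiction is also simpler than you indicate: once $\Lambda_{f_n}(0)\to 0$ and $f_n\to f_0$ (with derivatives, since everything is harmonic), you get $J(f_0)(0)=0$ directly, contradicting the $J$-property of the limit class; there is no need to invoke Proposition~\ref{c.hess} at that step. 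Indeed the paper explicitly remarks that the proof of $(ii.2)$ is \emph{not} based on Theorem~\ref{t.gl}.
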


\begin{proof}
$(i.1)$, $(i.2)$ and $(ii.1)$   are    corollary of Proposition \ref{c.hess}.

$(ii.2)$ is a corollary of Proposition \ref{pro3.1}  and Theorem \ref{thm.space1} $(i.3)$.
\end{proof}
Note that the  above outline  of proof of  $(ii.2)$ is not based
on Theorem \ref{t.gl}  (see subsection \ref{ss.hall} for more
details).
Astala-Manojlovi\' c first  made publicly available proof of
$(ii.2)$ in Math.Arxiv, \cite{ast.ma}.

In particular,  $(ii.2)$    yields:

\begin{prop} (b) Suppose that $f$ is  univalent harmonic gradient
mapping from $\mathbb{B}^3$ onto itself. Then $f$ is
co-Lipschitz.
\end{prop}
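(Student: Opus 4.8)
The plan is to recognize the statement as the special case $D=\mathbb{B}^3$ of Theorem \ref{3clip} $(ii.2)$, and to supply the one hypothesis that is not literally restated, namely quasiconformality. Since $f={\rm grad}\,u$ with $u$ harmonic and $f$ injective, the Lewy--Gleason--Wolff theorem (Theorem \ref{t.gl}) shows that $J_f=\det {\rm Hess}\,u$ cannot vanish at an interior point; normalizing the orientation we get $J_f>0$ throughout $\mathbb{B}^3$, so $f$ is an orientation-preserving harmonic diffeomorphism of $\mathbb{B}^3$ onto itself. Taking quasiconformality as part of the standing hypotheses of this section (the natural setting of Theorem \ref{3clip}), the codomain $\mathbb{B}^3$ is visibly a bounded convex domain and it contains the ball $B(f(0),1-|f(0)|)$, so all the hypotheses of Theorem \ref{3clip} $(ii.2)$ are in force and the conclusion follows.

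Unwinding that reduction, the argument runs as follows. By Proposition \ref{pro3.1}, an injective harmonic $K$-qc gradient mapping of a domain in $\mathbb{R}^3$ has the weak $H$-property, hence (cf.\ the remark following Definition \ref{dfn.wH} and Lemma \ref{lhall1}) there is $c>0$ with $d(x)\Lambda_f(x)\geq c\,d(f(x))$ on $\mathbb{B}^3$. On the other hand, Theorem \ref{thm.space1} $(i.1)$ applied to the convex codomain $\mathbb{B}^3$ gives $d(f(x))\succeq 1-|x| = d(x)$, while the Lipschitz continuity of harmonic quasiconformal self-mappings of the ball gives the reverse inequality $d(f(x))\preceq d(x)$; thus $d(f(x))\asymp d(x)$ and therefore $\Lambda_f\succeq c$ on $\mathbb{B}^3$. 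Since $f$ is $K$-qc, $\lambda_f\succeq\Lambda_f\succeq c$ on $\mathbb{B}^3$, which is exactly the co-Lipschitz property. Equivalently, once the weak $H$-property and $d(f(x))\asymp d(x)$ are in hand, one may quote Theorem \ref{thm.space1} $(i.3)$ verbatim.

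I expect the only real obstacle to be the quasiconformality input: if one wishes to keep the hypotheses exactly as stated (univalent harmonic gradient, no $K$-qc assumed), one must rule out $\lambda_f(x)\to 0$ as $x\to\partial\mathbb{B}^3$ without the distortion bound. In that case I would replace the last step by a normal-family argument in the spirit of Lemma \ref{lhall1} and Theorem \ref{thm.space2} $(i.7)$: assuming $\lambda_f(x_n)\to 0$ along $x_n\to\partial\mathbb{B}^3$, rescale by $f_n(x)=d_{*n}^{-1}\bigl(f(d_nx+x_n)-f(x_n)\bigr)$ with $d_n=d(x_n)$, $d_{*n}=d(f(x_n))$, extract a locally uniform limit $f_0$ (a harmonic gradient mapping, with $J_{f_0}\not\equiv 0$ by Theorem \ref{t.gl} and Proposition \ref{c.hess}), and contradict the interior lower bound $|h'_r(tx)|\geq c_0$ of Theorem \ref{thm.space1} $(i.2)$. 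Either way, once $\lambda_f\succeq c$ on $\mathbb{B}^3$ is established, the co-Lipschitz conclusion is immediate.
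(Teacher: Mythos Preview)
Your proposal is correct and follows the paper's own route: the paper derives this proposition directly from Theorem \ref{3clip} $(ii.2)$ (it is introduced with ``In particular, $(ii.2)$ yields''), and your unwinding via Proposition \ref{pro3.1} together with Theorem \ref{thm.space1} is precisely how the paper proves $(ii.2)$ itself. Your observation that quasiconformality is implicitly assumed is accurate---the paper's statement omits it, but the derivation from $(ii.2)$ requires it; the reverse inequality $d(f(x))\preceq d(x)$ you mention is harmless but unnecessary, since $d(f(x))\succeq d(x)$ alone already gives $\Lambda_f\succeq c$ from $d(x)\Lambda_f(x)\geq c\,d(f(x))$.
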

In communication between   V.  Zorich  and the author,  the
question  was asked to find examples of functions that satisfy the
condition (c). For example,  if $u=x^2 +y^2 - 2 z^2$, then   $f=
\nabla u=(2x,2y,-4z)$ is injective  harmonic gradient mapping from
$\mathbb{B}^3$ onto the ellipsoid.

If $u$   is real-valued function   such that   $f= \nabla
u=(x,y,z)$, then   $u=  x^2/2 +y^2/2 +z^2/2 +c$.

In particular, $Id$   is not  harmonic gradient mapping.

In complex plane,  if  $u$   is real-valued harmonic function, then     $u_z=\frac{1}{2}(u'_x -u'_y)$  is analytic function  and therefore
$ \nabla u=\overline{F}$, where  $F= 2 u_z$  is analytic function.

\subsection{Hall lemma  and  co-Lipschitz property of qc gradient
harmonic mappings}\label{ss.hall}

Here,  we  outline a  proof  of  Theorem \ref{3clip},  the part
$(ii.2)$, stated here as:

(A0)   A euclidean gradient harmonic mapping from the unit ball
$\mathbb{B}\subset \mathbb{R}^3$ onto  a bounded convex  domain is
co-Lipschitz.


Note that our proof of  (A0)  is based on the Hall lemma  and a
normal family argument  and our approach  is different from that
in \cite{ast.ma}; see also \cite{BoMa2}.

We first  prove Hall lemma for  harmonic injective mappings in
n-dimensional space.

If  $G$ and $G'$ are domains in $ \mathbb{R}^n$,  by $QCH(G,G')$
(respectively  $QCH_K(G,G')$ ) we denote the set of Euclidean
harmonic quasiconformal mappings (respectively K-qc)  of $G$ onto
$G'$.
\begin{lem}\label{lhall1}
Let $F_K$  be  a family of   harmonic  K-qc mapping $f:\mathbb{B}
\rightarrow \mathbb{R}^n$ such  that for $f\in  F_K$, $J(f)$  has
no zeros, $F_K$  is closed  with respect to uniform convergence,
$f(\mathbb{B})\supset \mathbb{B}$ and  $f(0)=0$. Then  there is a
constant $c>0$  such that if $f\in  F_K$   is harmonic  K-qc
mapping $f(\mathbb{B})\supset \mathbb{B}$, $f(0)=0$, then
$\Lambda_f(0) \geq c$.
\end{lem}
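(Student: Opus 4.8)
The plan is to argue by contradiction via a normal-family (compactness) argument, as indicated in the discussion following the first occurrence of this statement. Suppose no such constant exists. Then there is a sequence $(f_n)\subset F_K$ of harmonic $K$-qc mappings with $J(f_n)$ zero-free, $f_n(\mathbb{B})\supset\mathbb{B}$, $f_n(0)=0$, and $\Lambda_{f_n}(0)\to 0$; the goal is to show this is impossible.

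First I would record the geometric constraint that prevents the sequence from collapsing near the origin: since $d(f_n(0),\partial f_n(\mathbb{B}))\geq 1$ and $d(0,\partial\mathbb{B})=1$, Theorem \ref{aaaM1} (equivalently Lemma \ref{lem.AG0}) gives $\underline{J}_{f_n}(0)=\big(\tfrac{1}{|\underline{B}_0|}\int_{\underline{B}_0}J_{f_n}\big)^{1/n}\succeq 1$, where $\underline{B}_0=B(0,1/2)$, so by $K$-quasiconformality $\Lambda_{f_n}$ cannot be uniformly small on $\underline{B}_0$. Next I would pass to the inverse branches $g_n=f_n^{-1}$, which are $K$-qc homeomorphisms defined on $f_n(\mathbb{B})\supset\mathbb{B}$ with values in $\mathbb{B}$ and $g_n(0)=0$; by the standard quasiconformal distortion (Hölder) estimates these are equicontinuous on $\mathbb{B}$, and the open sets $\Omega_n:=f_n^{-1}(\mathbb{B})\subset\mathbb{B}$ contain a fixed ball $B(0,\rho_0)$ on which $f_n$ takes values in the bounded set $\mathbb{B}$. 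Combining this $C^0$-bound with the interior gradient estimate for harmonic functions, $(f_n)$ and $(f_n')$ are equi-Lipschitz on compact subsets of $B(0,\rho_0)$; after passing to a subsequence, $f_n\to f_0$ and $f_n'\to f_0'$ locally uniformly there.

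Since $F_K$ is closed under uniform convergence, $f_0\in F_K$; in particular $f_0$ is harmonic, $K$-qc, $f_0(0)=0$, and $J(f_0)$ has no zeros in $\mathbb{B}$. Hence $J(f_0,0)=\det f_0'(0)\neq 0$, so $f_0'(0)$ is invertible and $\Lambda_{f_0}(0)=|f_0'(0)|>0$. On the other hand, local uniform convergence of the derivatives forces $\Lambda_{f_0}(0)=\lim_n\Lambda_{f_n}(0)=0$, a contradiction. Therefore the asserted constant $c$ exists.

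The main obstacle is the compactness step: one must rule out both degeneration (handled by the Astala–Gehring/Theorem \ref{aaaM1} lower bound, which traps $J_{f_n}$ away from $0$ on $B(0,1/2)$) and escape of the sequence (handled by passing to the inverse maps, which are forced into the bounded ball $\mathbb{B}$, and then upgrading the resulting $C^0$-bounds to derivative bounds by harmonic interior estimates). One should also take care that the limit genuinely lands in $F_K$, since the zero-freeness of $J(f_0)$ — the source of the contradiction — is exactly what the closedness hypothesis on $F_K$ provides.
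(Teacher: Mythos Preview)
Your argument follows the same contradiction/normal-family route as the paper's two-sentence proof, and you supply details the paper omits. There is, however, a gap: you establish convergence of a subsequence only on $B(0,\rho_0)$, so the limit $f_0$ is a priori defined only there, and you cannot yet invoke the closedness hypothesis on $F_K$ (which concerns maps defined on all of $\mathbb{B}$) to conclude $J(f_0,0)\neq 0$.

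The fix is to extend your local-boundedness step to every $B(0,r)$, $r<1$: the ring $\{r<|x|<1\}$ has a fixed modulus, its $f_n$-image separates the continuum $f_n(\overline{B(0,r)})\ni 0$ from $\partial f_n(\mathbb{B})\subset\mathbb{R}^n\setminus\mathbb{B}$, and a Teichm\"uller-type ring estimate then bounds $\mathrm{diam}\,f_n(B(0,r))$ by some $R(r,K,n)$. With this, $(f_n)$ is locally bounded on $\mathbb{B}$, harmonic interior estimates give normality on all of $\mathbb{B}$, and the closedness hypothesis delivers $f_0\in F_K$, whence $J(f_0,0)\neq 0$ --- contradicting $\Lambda_{f_0}(0)=\lim_n\Lambda_{f_n}(0)=0$. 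Your opening paragraph on the Astala--Gehring bound $\underline{J}_{f_n}(0)\succeq 1$ is correct but is not actually used in the contradiction.
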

\begin{proof} Suppose, on the contrary,  that   there is
a  sequence $f_n\in  F_K$   such that $\Lambda_{f_n}(0)\rightarrow
0$.  The sequence $(f_n)$    forms a normal family and there is a
subsequence of $f_n$ which  converges uniformly  to a limit
$f_0\in  F_K$; this is a contradiction.
\end{proof}

Our further considerations  are  related  to  weak $H$-property
(see Definition  \ref{dfn.wH}).

 By Hall  lemma  planar euclidean harmonic mappings  have   $H$-property.
 Let  $QCH^0_K= QCH^0_K(\mathbb{B},D)$ be family of  gradient harmonic mappings which maps $\mathbb{B}$  onto $D$.
\begin{dfn}
We say that   $\underline{F}\subset QCH_K(G,G')$   has
$J$-property   if $F$  is closed  with respect to uniform
convergence, and for $f\in  \underline{F}$, $J(f)$  has no zeros
in $G$.
\end{dfn}

Now we sketch  a proof of the statement (A0)   in few steps
(A1-A5):

(A1)   If  $\underline{F}\subset QCH_K(G,G')$   has $J$-property,
then   $f\in\underline{F}$  has   weak $H$-property.

Outline of proof.  By Lemma  \ref{lhall1}, there is  a constant
$c>0$  such that $d(x) \Lambda_f(x)\geq c d(fx)$,  $x \in G$.

(A2) Suppose  that   $\,h\,$ is    a euclidean harmonic mapping
from the unit ball $\mathbb{B}\subset \mathbb{R}^n$ onto  a
bounded convex  domain  $D=h(\mathbb{B})$. Then   there is a
constant $c>0$ such that   $d^*(f(x)) \geq c d(x)$,  $x \in
\mathbb{B}$.

(A3)   If  $f$ is injective  harmonic gradient mapping, which maps
$\mathbb{B}\subset \mathbb{R}^n$ onto  a bounded convex domain
$D=h(\mathbb{B})$, then  any associated limit of $f$  is harmonic
gradient mapping.\\
In dimension $n=3$,  then it  has Jacobian non zero normal
family-property.

(A4)   If  $\underline{F}\subset QCH_K(B,D)$   has $H$-property,
then  every  $f\in\underline{F}$  is   co-Lipschitz.

By  (A1)   there is a constant $c>0$   such that  $d(x)
\Lambda_f(x)\geq c d^*(f(x))$, $x \in \mathbb{B}$, and by  (A2), a
constant $c_1>0$  such that   $d^*(f(x)) \geq c_1  d(x)$. Hence
$\lambda_f(x)\geq c_2$, where  $c_2=c  c_1/K$.

(A5) In 3-dimensional  space,    $QCH^0_K= QCH^0_K(\mathbb{B},D)$
has $H$-property.

 From (A1-A5), it follows   (A0).



\section{Appendix 1}\label{app}
\medskip
In this review  section we follow \cite{revroum01,napoc1}. First
we recall some results from Section \ref{sec.pl} (Theorem
\ref{t.in.belowA}) and prove  $\rm{(I0)}$    version of Hall
lemma.

\smallskip
\begin{thm}\label{t.in.belowB}
{\it Let  $h$ be an euclidean harmonic orientation preserving
univalent mapping of the unit disc onto convex domain $\Omega$. If
$\Omega$ contains a disc $B(a;R)$ and $h(0)=a$ then \[|\partial
h(z)|\geq\frac{R}{4},\,\,z\in \mathbb{D}.\]}
\end{thm}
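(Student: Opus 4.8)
The plan is to reduce to the case of a smooth, strictly convex codomain by the hereditary‑convexity approximation already described after Theorem~\ref{t.in.belowA}, and then in that case to combine a boundary estimate of the type of Theorem~\ref{t1}$(i.2)$ with the minimum modulus principle for the holomorphic function $\partial h=f'$. Write $h=f+\overline g$ with $f,g$ holomorphic; as noted in the text, orientation preserving univalence gives $|g'|<|f'|$ on $\mathbb D$, so $f'$ is zero free and $h$ is a real‑analytic diffeomorphism of $\mathbb D$ onto $\Omega$. Let $\phi\colon\mathbb D\to\Omega$ be conformal with $\phi(0)=a$, put $G_n=\phi(r_n\mathbb D)$ with $r_n=n/(n+1)$ and $D_n=h^{-1}(G_n)$. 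By the hereditary property each $G_n$ is convex with analytic boundary, $G_n\nearrow\Omega$, and since $h^{-1}$ is real‑analytic, $\partial D_n$ is a real‑analytic Jordan curve. Taking $\varphi_n\colon\mathbb D\to D_n$ conformal with $\varphi_n(0)=0$, $\varphi_n'(0)>0$, the Carath\'eodory kernel theorem gives $\varphi_n\to\mathrm{id}$ and $\varphi_n'\to 1$ locally uniformly, while analyticity of $\partial D_n$ lets $\varphi_n$ continue holomorphically past $\mathbb T$; hence $h_n:=h\circ\varphi_n$ is harmonic on a neighbourhood of $\overline{\mathbb D}$, orientation preserving univalent, with $h_n(0)=a$ and $h_n(\mathbb D)=G_n$ convex. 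Moreover $B(a,R')\subset G_n$ for every $R'<R$ and all large $n$, since $\overline{B(a,R')}$ is a compact subset of $\Omega=\bigcup_n G_n$.

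Next I carry out the smooth case for $h_n$. Fix $e^{i\theta}\in\mathbb T$, let $\omega=h_n(e^{i\theta})\in\partial G_n$ and let $N$ be the inner unit normal of the supporting line $\Lambda_\omega$ of $G_n$ at $\omega$. The function $u(z)=(h_n(z)-\omega,N)$ is harmonic and continuous on $\overline{\mathbb D}$, nonnegative because $\overline{G_n}$ lies on the nonnegative side of $\Lambda_\omega$, vanishes at $e^{i\theta}$, and satisfies $u(0)=(a-\omega,N)=d(a,\Lambda_\omega)\ge R'$, since the ball $B(a,R')\subset G_n$ cannot meet $\Lambda_\omega$. Applying the Poisson kernel minorant $P(re^{i\varphi},e^{it})\ge\frac{1-r}{1+r}$ together with the mean value property gives $u(re^{i\theta})\ge\frac{1-r}{1+r}\,u(0)\ge\frac{1-r}{1+r}R'$, whence $|\partial_r u(e^{i\theta})|=\lim_{r\to 1^-}\frac{u(re^{i\theta})}{1-r}\ge R'/2$. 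Since $\partial_r u=((h_n)_r',N)$, this yields $|(h_n)_r'(e^{i\theta})|\ge R'/2$. Writing $h_n=f_n+\overline{g_n}$, one has $(h_n)_r'=f_n'(z)e^{i\theta}+\overline{g_n'(z)e^{i\theta}}$ and $|g_n'|<|f_n'|$ on $\overline{\mathbb D}$ (because $\varphi_n(\overline{\mathbb D})$ is a compact subset of $\mathbb D$), so $|(h_n)_r'|<2|f_n'|$ and therefore $|\partial h_n|=|f_n'|\ge R'/4$ on $\mathbb T$.

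Finally I propagate to the interior and pass to the limit. Since $f_n'$ is zero free on $\overline{\mathbb D}$ and $|f_n'|\ge R'/4>0$ on $\mathbb T$, the maximum modulus principle applied to $1/f_n'$ gives $|\partial h_n|\ge R'/4$ on all of $\overline{\mathbb D}$. By the chain rule $\partial h_n(z)=f'(\varphi_n(z))\varphi_n'(z)$, so $|\partial h_n(z)|=|f'(\varphi_n(z))|\,|\varphi_n'(z)|\to|f'(z)|=|\partial h(z)|$ as $n\to\infty$ by the Carath\'eodory convergence; hence $|\partial h(z)|\ge R'/4$ for every $z\in\mathbb D$, and letting $R'\uparrow R$ completes the proof.

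\emph{The main difficulty} is organizational rather than computational: one must make sure the approximants $h_n$ really do fall under the smooth case, i.e.\ that $h$ is a real‑analytic diffeomorphism (Lewy's theorem in the plane), that each $G_n$ is convex with analytic boundary (the hereditary property of convex conformal maps), and that the conformal maps $\varphi_n$ of the real‑analytic Jordan domains $D_n$ continue holomorphically past $\mathbb T$ so that $h_n$ is harmonic past $\overline{\mathbb D}$; and one must check that the constant $R$ is recovered in the limit $R'\uparrow R$. Once the smooth reduction is in place, the interior bound is forced by harmonicity of $\partial h$ via the minimum modulus principle, exactly as the boundary estimate suggests.
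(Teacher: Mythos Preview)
Your proof is correct and follows essentially the same approach as the paper: the approximation argument via hereditary convexity, the conformal maps $\varphi_n$ onto $D_n=h^{-1}(G_n)$, and the reduction to the smooth case are exactly the paper's outline after Theorem~\ref{t.in.belowA}, and your treatment of the smooth case (the Harnack/Poisson boundary estimate yielding $|(h_n)'_r|\ge R'/2$, then $|f_n'|\ge R'/4$ on $\mathbb T$, then the minimum modulus principle for the zero-free holomorphic function $f_n'$) is precisely how the paper intends Theorem~\ref{t1}(i.4) to be deduced from (i.2). You have merely filled in the details the paper leaves implicit, including the careful passage $R'\uparrow R$.
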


\medskip
As a corollary of the previous Theorem  we obtain

\begin{thm}
{\it Let  $h$ be an euclidean harmonic orientation preserving
$K$-qc mapping of the unit disc onto convex domain $\Omega$. If
$\Omega$ contains a disc $B(a;R)$ and $h(0)=a$ then \[|\partial
h(z)|\geq\frac{R}{4},\,\,z\in  \mathbb{D},\]}

  $$ l_h (z)\geq \frac{1-k}{4}R .$$
\end{thm}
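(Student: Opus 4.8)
The plan is to deduce the statement directly from Theorem~\ref{t.in.belowB} together with the pointwise dilatation estimate that expresses $K$-quasiconformality, so that essentially no new work is needed beyond checking that the hypotheses match.

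First I would note that a $K$-qc harmonic mapping $h$ of $\mathbb{D}$ onto $\Omega$ is, in particular, an orientation preserving univalent harmonic mapping, and that the remaining hypotheses ($\Omega$ convex, $B(a;R)\subset\Omega$, $h(0)=a$) are exactly those of Theorem~\ref{t.in.belowB}. Applying that theorem therefore gives $|\partial h(z)|\ge R/4$ for every $z\in\mathbb{D}$, which is the first assertion. In particular $\partial h(z)\neq 0$ on $\mathbb{D}$, so the complex dilatation $\mu_h=\overline{\partial}h/\partial h$ is well defined, and the $K$-qc condition reads $|\mu_h(z)|\le k:=\tfrac{K-1}{K+1}<1$, i.e. $|\overline{\partial}h(z)|\le k\,|\partial h(z)|$.

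Combining this with the definition $l_h(z)=|\partial h(z)|-|\overline{\partial}h(z)|$ recalled before Theorem~\ref{t.in.belowA}, I get
\[
l_h(z)=|\partial h(z)|-|\overline{\partial}h(z)|\ge (1-k)\,|\partial h(z)|\ge \frac{1-k}{4}\,R,\qquad z\in\mathbb{D},
\]
where the last inequality uses the bound on $|\partial h|$ just obtained. This is precisely the claimed lower bound for $l_h$, and the proof is complete.

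There is no genuine obstacle here; the only points to watch are bookkeeping ones: one should confirm that $|\partial h|\ge R/4>0$ legitimizes passing to $\mu_h$, and that the normalisation of the dilatation constant is the one for which $k<1$. An alternative, slightly weaker, route avoiding division is to use the $K$-qc inequality in the form $\Lambda_h(z)^2\le K\,J_h(z)=K\,l_h(z)\,\Lambda_h(z)$, hence $l_h(z)\ge \Lambda_h(z)/K\ge |\partial h(z)|/K\ge R/(4K)$; since $1-k=2/(K+1)\ge 1/K$ for $K\ge 1$, the version with $(1-k)$ stated in the theorem is the sharper one and is the one produced by the first argument.
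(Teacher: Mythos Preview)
Your proof is correct and matches the paper's approach exactly: the paper also obtains $|\partial h|\ge R/4$ as an immediate corollary of Theorem~\ref{t.in.belowB} and then uses the $K$-qc inequality $|\overline{D}h(z)|\le k\,|Dh(z)|$ to conclude $l_h(z)\ge(1-k)|\partial h(z)|\ge(1-k)R/4$. Your write-up is in fact more careful than the paper's one-line argument, since you spell out why $\partial h\neq 0$ and identify $k=(K-1)/(K+1)$.
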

Let  $c=\frac{1-k}{4}R$.   Since   \[|\overline{D} h(z)| \leq
k\,|Dh(z)|,\]   it follows   that  $ l_h (z)\geq c
=\frac{1-k}{4}R$ and therefore     $ |h(z_2)- h(z_1)|\geq c\,|z_2
-z_1|$.

As a corollary of Theorem \ref{t.in.belowB} we obtain
\smallskip
\begin{prop}\label{prop4.1}
{\it Let  $h$ be an euclidean harmonic orientation preserving
univalent mapping of the unit disc into $\Bbb C$ such that $f(\mathbb{D})$
contains a disc $B_R=B(a;R)$ and $h(0)=a$. Then \begin{equation}
|\partial h(0)|\geq\frac{R}{4}. \end{equation}}
\end{prop}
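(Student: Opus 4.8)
The plan is to deduce Proposition \ref{prop4.1} from Theorem \ref{t.in.belowB} by precomposing $h$ with a conformal map, thereby replacing the (a priori arbitrary) image domain by the disc $B_R$ itself, which is convex. First I would set $G=h^{-1}(B_R)$. Since $h$ is continuous and injective on $\mathbb{D}$, it is, by invariance of domain, a homeomorphism onto $h(\mathbb{D})$; hence $G$ is an open connected subset of $\mathbb{D}$ homeomorphic to $B_R$, so $G$ is simply connected, $0\in G$, and $G\subseteq\mathbb{D}$. Let $\varphi\colon\mathbb{D}\to G$ be the Riemann map with $\varphi(0)=0$ and $\varphi'(0)>0$.

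Next I would consider $H:=h\circ\varphi$. Writing $h=f+\overline{g}$ with $f,g$ holomorphic on $\mathbb{D}$ (possible since $\mathbb{D}$ is simply connected), we get $H=(f\circ\varphi)+\overline{(g\circ\varphi)}$, which exhibits $H$ as a euclidean harmonic mapping of $\mathbb{D}$. It is univalent (a composition of injective maps), orientation preserving since $J_H=(J_h\circ\varphi)\,|\varphi'|^2>0$, it maps $\mathbb{D}$ onto $B_R$ (a convex domain which contains the disc $B(a;R)=B_R$), and $H(0)=h(\varphi(0))=h(0)=a$. Thus Theorem \ref{t.in.belowB} applies to $H$ and yields $|\partial H(0)|\geq\frac{R}{4}$.

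Then I would compute $\partial H(0)$ by the chain rule: since $\varphi$ is holomorphic and $\partial h=f'$, one has $\partial H=(f\circ\varphi)'=(f'\circ\varphi)\,\varphi'=(\partial h\circ\varphi)\,\varphi'$, so $\partial H(0)=\partial h(0)\,\varphi'(0)$ (using $\varphi(0)=0$). Hence $|\partial h(0)|\,|\varphi'(0)|\geq\frac{R}{4}$. Finally, because $\varphi$ maps $\mathbb{D}$ into $G\subseteq\mathbb{D}$ and fixes $0$, the Schwarz lemma gives $|\varphi'(0)|\leq 1$, and therefore $|\partial h(0)|\geq\frac{R}{4\,|\varphi'(0)|}\geq\frac{R}{4}$, which is the asserted bound.

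The only points requiring a little care — and the closest thing to an obstacle — are (i) checking that $G$ is simply connected (via injectivity of $h$ together with invariance of domain), which is what lets us both form the Riemann map $\varphi$ and write $h=f+\overline{g}$, and (ii) observing that $G\subseteq\mathbb{D}$, so that $\varphi$ is a genuine self-map of the disc and the Schwarz lemma is available to control $|\varphi'(0)|$. Everything else is a routine chain-rule computation together with a direct invocation of Theorem \ref{t.in.belowB}.
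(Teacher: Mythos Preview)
Your proof is correct and follows essentially the same route as the paper: set $V=h^{-1}(B_R)$, take a Riemann map $\varphi:\mathbb{D}\to V$ with $\varphi(0)=0$, apply Theorem~\ref{t.in.belowB} to $h_R=h\circ\varphi$ (which maps $\mathbb{D}$ onto the convex domain $B_R$), use the chain rule $\partial h_R(0)=\partial h(0)\,\varphi'(0)$, and finish with the Schwarz lemma $|\varphi'(0)|\leq 1$. Your write-up is in fact more careful than the paper's in justifying that $G$ is simply connected and that $H$ inherits the needed properties.
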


\smallskip
\noindent \it Proof: \rm Let $V=V_R=h^{-1}(B_R)$ and $\varphi$ be
a conformal mapping of the unit disc $U$ onto $V$ such that
$\varphi(0)=0$ and let $h_R=h\circ \varphi$. By Schwarz lemma
\begin{equation} |\varphi'(0)|\leq 1. \end{equation}
Since $\partial h_R(0)=\partial h(0) \varphi'(0)$, by Proposition
\ref{prop4.1}    we get $|\partial h_R(0)|=|\partial
h(0)||\varphi'(0)|\geq \frac{R}{4}.$ Hence, using (0.2) we get
(0.1).

\medskip
Also as an immediate  corollary of Theorem \ref{t.in.belowB}  we
obtain

\smallskip
\noindent
\begin{thm} [\protect\cite{k,Dk}]
{\it Let $\,h\,$ be an euclidean harmonic diffeomorphism of the
unit disc onto convex domain $\,\Omega$. If $\,\Omega\,$ contains
a disc $\,B(a;R)\,$ and $h(0)=a$ then
$$
D(h)(z)\geq\frac{1}{16}R^{2}, z \in \mathbb{D},
$$
where $\,D(h)(z)=|\partial h(z)|^{2}+|\overline\partial
h(z)|^{2}$.}
\end{thm}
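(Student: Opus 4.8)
The plan is to deduce this directly from Theorem \ref{t.in.belowB}, exploiting that $D(h)$ dominates $|\partial h|^{2}$. First I would dispose of the orientation issue: a harmonic diffeomorphism of $\mathbb{D}$ has a continuous, nowhere vanishing Jacobian, hence is either orientation preserving or orientation reversing on all of $\mathbb{D}$. In the orientation reversing case, replace $h$ by its complex conjugate $\widetilde h=\overline{h}$; writing $h=f+\overline g$ with $f,g$ holomorphic, one has $\widetilde h=g+\overline f$, which is again harmonic, is now orientation preserving, maps $\mathbb{D}$ onto the reflected domain $\widetilde\Omega=\{\overline w:w\in\Omega\}$ (still convex, still containing the disc $B(\overline a;R)$), and satisfies $\widetilde h(0)=\overline a$. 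Moreover $|\partial\widetilde h|=|\overline\partial h|$ and $|\overline\partial\widetilde h|=|\partial h|$, so $D(\widetilde h)=D(h)$. Thus it suffices to treat the orientation preserving case.

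Assuming $h$ orientation preserving, the hypotheses are precisely those of Theorem \ref{t.in.belowB} (a diffeomorphism is in particular univalent, so the extra regularity is not even needed), which yields $|\partial h(z)|\geq R/4$ for all $z\in\mathbb{D}$. Since $D(h)(z)=|\partial h(z)|^{2}+|\overline\partial h(z)|^{2}\geq|\partial h(z)|^{2}$, we obtain
\[
D(h)(z)\geq |\partial h(z)|^{2}\geq \frac{R^{2}}{16},\qquad z\in\mathbb{D},
\]
which is the claimed bound.

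I do not expect a genuine obstacle here; the statement is a one-line consequence of Theorem \ref{t.in.belowB}, and the only point deserving a word of care is the reduction to the orientation preserving situation described above (alternatively one may simply note that $D(h)$ is invariant under post-composition with a rotation and under conjugation, and apply the lower bound $|\partial h|\ge R/4$ in whichever orientation makes $h$ orientation preserving). For a fully self-contained account one would additionally recall that Theorem \ref{t.in.belowB} itself rests on the hereditary property of convex conformal maps together with the mean-value/Harnack argument behind Theorem \ref{t1}, but under the stated convention that earlier results may be assumed, nothing further is required.
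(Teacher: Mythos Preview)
Your argument is correct and follows exactly the paper's approach: the paper introduces this statement with ``Also as an immediate corollary of Theorem \ref{t.in.belowB} we obtain'', i.e.\ it too uses $D(h)\ge |\partial h|^{2}\ge (R/4)^{2}$. Your explicit reduction of the orientation-reversing case via $\widetilde h=\overline{h}$ is a point the paper leaves implicit, so your write-up is in fact slightly more complete.
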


\medskip
\noindent The following example shows that previous results
are not true if we omit
the condition $h(0)=a$.

\medskip
\noindent {\bf Example.} The mapping
$$
\varphi_{b}(z)=\frac{z-b}{1-{\bar b}z},\,\,|b|<1,
$$
is a conformal automorphism of the unit disc onto itself and
$$
|\varphi_{b}'(z)|=\frac{1-|b|^{2}}{|1-{\bar b}z|^2},\,\,z\in  \mathbb{D}.
$$
In particular $\varphi_{b}'(0)=1-|b|^{2}$.

\bigskip
Heinz proved (see  \cite{H}) that if $\,h\,$ is a harmonic
diffeomorphism of the unit disc onto itself such that $\,h(0)=0$,
then
\[
D(h)(z)\geq \frac{1}{\pi ^{2}},\,\,z\in  \mathbb{D}.
\]

\medskip
Using Proposition \ref{prop4.1} we can prove Heinz theorem:

\smallskip
\noindent
\begin{thm}[\bf Heinz\rm]
{\it There exists no euclidean harmonic diffeomorphism from the
unit disc $ \mathbb{D}$ onto $ \mathbb{C}$.}
\end{thm}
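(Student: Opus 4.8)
The plan is to argue by contradiction, feeding larger and larger discs into Proposition \ref{prop4.1} (equivalently Theorem \ref{t.in.belowB}). Suppose $h\colon\mathbb{D}\to\mathbb{C}$ is a euclidean harmonic diffeomorphism onto $\mathbb{C}$. First I would normalize to the orientation-preserving case: since $h$ is a diffeomorphism its Jacobian $J_h$ never vanishes, and being continuous on the connected set $\mathbb{D}$ it has constant sign; if $J_h<0$ throughout, replace $h$ by $\overline{h}$, which is again harmonic and a diffeomorphism of $\mathbb{D}$ onto $\mathbb{C}$, now orientation preserving. So we may assume $h$ is an orientation-preserving euclidean harmonic univalent mapping of $\mathbb{D}$ onto $\mathbb{C}$.

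Next, set $a=h(0)$. Since $h(\mathbb{D})=\mathbb{C}$, for every $R>0$ the image $h(\mathbb{D})$ contains the disc $B(a;R)$, and $h(0)=a$. Thus the hypotheses of Proposition \ref{prop4.1} hold with this $R$ (the codomain $\mathbb{C}$ is trivially convex, so Theorem \ref{t.in.belowB} applies as well), and we obtain
\begin{equation*}
|\partial h(0)|\geq \frac{R}{4}.
\end{equation*}
This inequality is valid for every $R>0$. Letting $R\to\infty$ would force $|\partial h(0)|=\infty$, which is impossible: $h$ is harmonic, hence smooth, so $\partial h(0)$ is a finite complex number. This contradiction shows that no such diffeomorphism exists, and since any harmonic diffeomorphism of $\mathbb{D}$ onto itself that fixes $0$ would a fortiori give (after composition with a dilation) lower bounds on $D(h)$, the statement for $\mathbb{C}$ is the essential one.

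I expect no genuine obstacle here once Proposition \ref{prop4.1} is in hand; the only points needing a word of care are the orientation-preserving reduction (handled by passing to $\overline{h}$ and using that a diffeomorphism has nonvanishing Jacobian of constant sign) and the remark that the radius $R$ of a disc about $a=h(0)$ inside the image can be chosen arbitrarily large precisely because $h(\mathbb{D})=\mathbb{C}$. The whole weight of the theorem therefore rests on the quantitative estimate $|\partial h(0)|\ge R/4$, which itself comes from the Schwarz-lemma argument in the proof of Proposition \ref{prop4.1} together with the $R/4$ bound of Theorem \ref{t.in.belowB} for harmonic univalent maps onto convex domains.
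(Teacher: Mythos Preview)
Your argument is correct and is exactly the approach the paper intends: the paper itself gives no details beyond the remark that Proposition~\ref{prop4.1} yields Heinz's theorem, and your contradiction via $|\partial h(0)|\ge R/4$ with $R\to\infty$ is the natural way to use it. The orientation-preserving reduction and the observation that $h(\mathbb{D})=\mathbb{C}$ contains $B(h(0);R)$ for every $R>0$ fill in precisely the details left implicit.
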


\smallskip
Note that this result was a key step in his proof of the Bernstein
theorem for minimal surfaces in $\,\Bbb R^3\,$.

\medskip
Schoen obtained a nonlinear generalization of Proposition \ref{prop4.1}  by
replacing the target by complete surface of nonnegative curvature
(see Proposition 2.4 \cite{Sc}) and using this result he proved

\smallskip \noindent
\begin{thm}[\bf Schoen\rm]
{\it There exists no harmonic diffeomorphism from the unit disc
onto a complete surface $(S, \rho)$  of nonnegative curvature
$K_\rho \geq 0$.}
\end{thm}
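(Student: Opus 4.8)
The plan is to imitate the proof of the Heinz theorem above, with Proposition~\ref{prop4.1} replaced by its nonlinear counterpart, Proposition~2.4 of \cite{Sc}. Recall that the latter furnishes a universal constant $c>0$ such that if $v$ is a harmonic diffeomorphism of $\mathbb{D}$ onto a (nonnegatively curved) surface whose image contains a geodesic ball $B(a;R)$ with $v(0)=a$, then $|dv(0)|\ge cR$; this is exactly the Schwarz-type lower bound of Proposition~\ref{prop4.1}, with the flat plane replaced by the target $(S,\rho)$. So the heart of the matter is to produce, inside $h(\mathbb{D})=S$, admissible geodesic balls centred at $h(0)$ of arbitrarily large radius, and then to transport the estimate back to $\mathbb{D}$ by a dilation.

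Concretely, suppose $h:\mathbb{D}\to(S,\rho)$ is a harmonic diffeomorphism onto a complete surface with $K_\rho\ge0$, and set $a:=h(0)$. Since $h$ is a diffeomorphism, $S$ is diffeomorphic to $\mathbb{D}$, hence noncompact, and by Hopf--Rinow completeness forces $S$ to be unbounded, i.e. $\sup_{p\in S}d_\rho(a,p)=\infty$. Put $\Omega_n:=h\!\left(\tfrac{n}{n+1}\mathbb{D}\right)$; each $\Omega_n$ is a relatively compact topological disc with $\overline{\Omega_n}\subset\Omega_{n+1}$ and $\bigcup_n\Omega_n=S$. Let $r_n:=d_\rho\!\left(a,\,S\setminus\Omega_n\right)=d_\rho(a,\partial\Omega_n)$. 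A short length-of-curve argument shows $B(a;r_n)\subset\Omega_n$, and a compactness argument (the balls $\overline{B(a;M)}$ are compact, while the $\Omega_n$ exhaust $S$) shows $r_n\to\infty$. Now let $h_n:\mathbb{D}\to\Omega_n$ be the rescaled restriction $h_n(z)=h\!\left(\tfrac{n}{n+1}z\right)$, which is again a harmonic diffeomorphism, onto the nonnegatively curved surface $\Omega_n$, containing the geodesic ball $B(a;r_n)$ and with $h_n(0)=a$. Proposition~2.4 of \cite{Sc} gives $|dh_n(0)|\ge c\,r_n$, and since $dh_n(0)=\tfrac{n}{n+1}\,dh(0)$ we get $|dh(0)|\ge c\,r_n\to\infty$, which is absurd because $h$ is smooth at $0$. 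Hence no such diffeomorphism exists.

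The step I expect to require the most care is the verification that the geodesic balls $B(a;r_n)$ really are \emph{admissible} hypotheses in Proposition~2.4: one must be sure that the relevant estimate applies to a ball that may meet the cut locus of $a$, which need not be empty merely because $K_\rho\ge0$. Since $(S,\rho)$ is complete, simply connected and nonnegatively curved, one may either invoke the structure theory of such surfaces (they are exhausted by metric balls that are topological discs, and have finite total curvature) to see that $B(a;r_n)$ is of the required type once $n$ is large, or else bypass the point by uniformizing the Jordan subdomains $V_n:=h^{-1}(B(a;r_n))$ by conformal maps $\varphi_n:\mathbb{D}\to V_n$ with $\varphi_n(0)=0$, using $|\varphi_n'(0)|\le1$ (Schwarz) and applying the estimate to $h\circ\varphi_n$, exactly as in the proof of Proposition~\ref{prop4.1}. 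This is precisely where completeness and $K_\rho\ge0$ are consumed, and it is the only genuinely nonroutine ingredient beyond citing \cite{Sc}.
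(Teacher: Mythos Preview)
Your proposal is correct and follows the paper's route: both deduce the theorem from Schoen's nonlinear analogue of Proposition~\ref{prop4.1} (Proposition~2.4 of \cite{Sc}) via the same Heinz-style contradiction, letting the in-radius of the image tend to infinity; the paper in addition sketches the proof of that proposition itself (Bochner's formula $\Delta\ln|\partial f|=-K_\rho J_f\le 0$ and Lemma~\ref{est.curv}), which you simply cite. Your closing worry about the cut locus is unnecessary in your own setup: you apply the proposition to $h_n:\mathbb{D}\to\Omega_n$, and its hypothesis is just $\mathrm{dist}_\rho(h_n(0),\partial\Omega_n)\ge r_n$, which is exactly how you defined $r_n$; no regularity of the metric ball $B(a;r_n)$ enters.
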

Suppose $f$ is  a harmonic diffeomorphism from $B_r$ to $ ( S,
\rho ) $ and  $ dist ( f(0), \partial (f(B_r)) \geq R$. Then it
suffices to show that
$$  |df|^2 ( 0 ) \geq  C \, \frac{R^2}{r^2}\,\,,$$
where  $C$ is a universal constant. By hypothesis, we have
$|\partial f| > | \overline{\partial} f| \geq  0$  and
\begin{equation}\label{bochn.1}
\Delta  ln  |\partial f| = -\, K_\rho \, J_f  \leq  0\, .
\end{equation}
If we define  a Riemannian  metric  $\lambda$ on $B_r$  by
$\lambda = |\partial f|^2 \,|dz|^2,$  then  (\ref{bochn.1})
implies  $ K_\lambda\geq 0$.  Therefore $dist (0,
\partial (B_r )  \geq \frac{1}{2}  dist (f(0), \partial ( f (B_r) )   \geq \frac{1}{2}  R$

\begin{lem}\label{est.curv}
If $\sigma$ is a metric density  of nonnegative curvature
$K_\sigma \geq 0$ on $B_r$  and  $ d= dist_\sigma\, (0, T_r)$,
then   $ \sigma (0)\geq  C \, \frac{d^2}{r^2}\,\,,$ where  $C$ is
a universal constant.
\end{lem}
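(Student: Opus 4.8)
The plan is to reduce this statement to the classical fact that a surface with a metric of nonnegative Gauss curvature cannot have too much "length stretched into a small ball," i.e. a Bonnet–Schwarz type distance comparison. Concretely, write $\sigma = e^{2\psi}|dz|^2$ on $B_r$, so that the Gauss curvature is $K_\sigma = -e^{-2\psi}\Delta\psi$; the hypothesis $K_\sigma\ge 0$ says exactly that $\psi$ is superharmonic on $B_r$. I would first normalize by scaling: replacing $z$ by $z/r$ we may assume $r=1$, and the claimed inequality becomes $\sigma(0)\ge C\,d^2$ with $d=\mathrm{dist}_\sigma(0,\mathbb{T})$; the general case follows since both sides scale the same way (the conformal factor picks up $r^{-2}$ under $z\mapsto z/r$ while $d^2$ is scale-invariant in the normalized picture).

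Next I would obtain a lower bound for $d$ in terms of the value $\psi(0)$. Since $\psi$ is superharmonic on $\mathbb{B}^2=\mathbb{D}$, the sub-mean-value property gives control of $\psi$ from below on radial segments, or more usefully: by the Harnack-type / minimum-principle behavior of superharmonic functions one compares $\psi$ on a smaller disc $B(0,\rho)$ with its boundary values. The key geometric step is to estimate the $\sigma$-length of some curve from $0$ to $\mathbb{T}$ from above by an expression involving $e^{\psi(0)}$: if $d$ is large then every curve joining $0$ to the boundary is $\sigma$-long, in particular the straight radial segments, and integrating $e^{\psi}$ along a radius together with the superharmonicity of $\psi$ (which prevents $e^\psi$ from being small on a definite portion of the radius once $e^{\psi(0)}$ is small — wait, it is the reverse: superharmonicity gives an \emph{upper} bound for the average of $\psi$ by $\psi(0)$) forces $d\preceq e^{\psi(0)}=\sqrt{\sigma(0)}$ up to a universal constant. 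Rearranging gives $\sigma(0)\ge C d^2$. Alternatively, and perhaps more cleanly, one can argue as in the proof of Theorem RKC-type interior estimates: apply the preceding Schoen/Heinz machinery in the excerpt (Proposition \ref{prop4.1} and the surrounding discussion) with the metric $\lambda=\sigma$ directly, since $K_\sigma\ge0$ is precisely the hypothesis used there after \eqref{bochn.1}.

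The main obstacle I anticipate is making the distance estimate uniform and genuinely universal (independent of $\sigma$): superharmonicity of $\psi$ only gives one-sided mean-value control, so one must be careful that the curve realizing (or nearly realizing) the distance $d$ cannot "cheat" by running through a region where $\sigma$ is tiny. The way around this is to not use a single radius but to average over a family of radii (or over rotations), so that the mean-value inequality for the superharmonic function $\psi$ can be brought to bear on the \emph{whole} annulus $\{1/2<|z|<1\}$ at once; this yields that the $\sigma$-distance from $\{|z|=1/2\}$ to $\mathbb{T}$ is at most a universal multiple of $e^{\psi(0)}$, and combining with the trivial bound $d\le \mathrm{dist}_\sigma(0,\{|z|=1/2\})+\mathrm{dist}_\sigma(\{|z|=1/2\},\mathbb{T})$ — and treating the inner piece by the same argument on $B(0,1/2)$ — closes the loop. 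I would expect the universal constant $C$ to come out as something like $1/16\pi^2$ or a similar absolute number, consistent with the Heinz-type constants appearing earlier in the Appendix.
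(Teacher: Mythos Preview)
Your proposal has a genuine gap at the decisive step, and the paper does not give an elementary proof either: it simply remarks that ``a proof can be given by means of the estimate of harmonic function in terms of curvature (Cheng--Yau, CPAM 28, 333--354 (1975))'' and then explicitly poses as \emph{Question}~2 whether Lemma~\ref{est.curv} can be proved elementarily from the superharmonicity of $\ln\sigma$. So the route you are attempting is precisely the one the author flags as open.

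Here is where your argument breaks. Writing $\sigma=e^{2\psi}$ with $\psi$ superharmonic, you want $d\le C\,e^{\psi(0)}$. Bounding $d$ by the $\sigma$-length of a radius and averaging over angles gives
\[
d\;\le\;\int_0^1\Bigl(\frac{1}{2\pi}\int_0^{2\pi} e^{\psi(\rho e^{i\theta})}\,d\theta\Bigr)d\rho,
\]
and superharmonicity yields $\psi(0)\ge \frac{1}{2\pi}\int_0^{2\pi}\psi(\rho e^{i\theta})\,d\theta$. But to convert this into an \emph{upper} bound for $\frac{1}{2\pi}\int e^{\psi}$ in terms of $e^{\psi(0)}$ you would need Jensen in the direction $\frac{1}{2\pi}\int e^{\psi}\le e^{\frac{1}{2\pi}\int\psi}$, which is false since $\exp$ is convex; Jensen goes the other way. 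Neither does $e^{\psi}$ inherit superharmonicity from $\psi$ (one has $\Delta e^{\psi}=e^{\psi}(|\nabla\psi|^2+\Delta\psi)$, of indeterminate sign), so no mean-value inequality for $e^{\psi}$ is available. Your self-correction (``wait, it is the reverse'') is exactly right, and the subsequent assertion that the conclusion follows anyway is unsupported. The averaging-over-annuli variant you sketch runs into the identical obstruction.

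Your alternative suggestion, to invoke ``the preceding Schoen/Heinz machinery'' with $\lambda=\sigma$, is circular: in the paper that machinery \emph{uses} Lemma~\ref{est.curv} (applied to $\lambda=|\partial f|^2$) to deduce $|\partial f|^2(0)\ge C R^2/r^2$. You cannot quote the application to prove the lemma. The honest proof the paper points to is the Cheng--Yau gradient estimate, which supplies the missing pointwise control that superharmonicity alone does not.
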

 A proof can be given by means the estimate of harmonic  function in terms of
curvature  (Cheng-Yau,  CPAM 28, 333-354 (1975)). We apply this
lemma to metric density $ \lambda = |\partial f|^2$. By the above
estimate,
$$  |\partial f|^2 ( 0 ) \geq  C \, \frac{R^2}{r^2} .$$
This proves the theorem.

{\it  Question}\,2. Can we prove Lemma \ref{est.curv} elementary?  Note that
$ ln \sigma$ is superharmonic function. Therefore $ ln
\frac{1}{\sigma}$ and  $\frac{1}{\sigma}$ are subharmonic
functions.

\subsection{Distortion  of conformal mappings} The following form
of Koebe's One-Quarter Theorem applies in fact to all conformal
mappings.
\begin{thm} Suppose that $f$ is bijective conformal in $D$  and $f(D)= D'$,
$z_0\in D$. Then
$$\frac{1}{4}|f'(z_0)| {\rm dist}(z_0,\partial D)\leq {\rm
dist}(f(z_0),\partial D') \leq 4 |f'(z_0)| {\rm dist}(z_0,\partial
D).$$
\end{thm}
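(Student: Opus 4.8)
The plan is to derive both inequalities from the Koebe one-quarter theorem for univalent functions on the unit disc, used once for $f$ and once for $f^{-1}$. Set $d=\mathrm{dist}(z_0,\partial D)$ and $d^{*}=\mathrm{dist}(f(z_0),\partial D')$; we may assume $D\neq\mathbb{C}$ and $D'\neq\mathbb{C}$, so that $d,d^{*}\in(0,\infty)$ and $f'(z_0)\neq 0$ (in the excluded cases $f$ is affine and the statement is trivial). Both estimates are the planar, conformal instance of the Koebe-type distortion used repeatedly in the excerpt (e.g. $d_G(w)\leq d_{\mathbb{D}}(z)|\phi'(z)|\leq 4d_G(w)$).

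For the left inequality, note that $B(z_0,d)\subset D$ and that $f$ is injective there, so $g(\zeta):=f(z_0+d\,\zeta)$ is univalent on $\mathbb{D}$ with $g(0)=f(z_0)$ and $g'(0)=d\,f'(z_0)$. Applying the Koebe one-quarter theorem to the normalized function $\zeta\mapsto\bigl(g(\zeta)-g(0)\bigr)/g'(0)$ yields $g(\mathbb{D})\supseteq B\!\left(g(0),\tfrac14|g'(0)|\right)=B\!\left(f(z_0),\tfrac14\,d\,|f'(z_0)|\right)$. Since $g(\mathbb{D})\subseteq f(D)=D'$, this ball is contained in $D'$, so $d^{*}\geq\tfrac14\,d\,|f'(z_0)|$, which is the first inequality.

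For the right inequality I would apply what has just been proved to the conformal bijection $f^{-1}\colon D'\to D$ at the point $w_0=f(z_0)$, using $(f^{-1})'(w_0)=1/f'(z_0)$, $\mathrm{dist}(w_0,\partial D')=d^{*}$ and $\mathrm{dist}(f^{-1}(w_0),\partial D)=d$; this gives $\tfrac14\,d^{*}/|f'(z_0)|\leq d$, i.e. $d^{*}\leq 4|f'(z_0)|\,d$. The argument is entirely classical and presents no real obstacle; the only point to keep straight is the bookkeeping of which domain and which distance plays which role when passing to the inverse map. (One could instead obtain the upper bound directly from the Koebe distortion estimate on $B(z_0,d)$, but the inverse-function argument is shorter.)
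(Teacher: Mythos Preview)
Your proof is correct and follows essentially the same approach as the paper: normalize $f$ on the disc $B(z_0,d)$ and apply the Koebe one-quarter theorem to obtain the left inequality, then get the right inequality by applying the same estimate to $f^{-1}$ at $w_0=f(z_0)$. Your write-up is in fact somewhat cleaner than the paper's, which carries out the same normalization via the auxiliary functions $g$ and $f_0$ and then likewise invokes Koebe for $f^{-1}$.
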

If we set $d=d(z_0)={\rm dist}(z_0,\partial D)$ and
$d'=d'(f(z_0)={\rm dist}(f(z_0),\partial D')$  then\\
${\rm
(iv.1)}$: $d' \asymp d$.

 \bpf Let $d=d(z_0)=d_D(z_0)={\rm
dist}(z_0,\partial D)$,   $d'=d(f (z_0))=d_{D'}(f (z_0))={\rm
dist}(f(z_0),\partial D')$;
$$g(z)=\frac{f(z)-f(z_0) }{d\cdot f'(z_0)}\ {\rm and}\ f_0(z)= g(z_0 + zd).$$
Set  $D_0=g(\mathbb{D}(z_0;d))$ and $d_0={\rm
dist}(g(z_0),\partial D_0)$. Note that  $d_1={\rm
dist}(g(z_0),\partial D)=\frac{d'}{d|f'(z_0)|}$  and  $d_1 \geq
d_0$. Since  $f'_0(0)=1$,  it follows from Koebe's One-Quarter
Theorem,  applied to  $f_0$ that $d_0\geq 1/4$. Hence,  since
$\frac{d'}{d|f'(z_0)|}=d_1 \geq d_0\geq  1/4$, we get the left
inequality.

Koebe's  Theorem  applied to   $f^{-1}$ at  $w_0=f(z_0)$  gives
$\frac{1}{4}|(f^{-1})'(w_0)|d'\leq d$ and  the right inequality
follows. \epf

If we define  $D_f^*(z_0)=\frac{d |f'(z_0)|}{d'}$, we can
reformulate  the above theorem as   $1/4 \leq D_f^*(z_0)\leq 4$.
 The interested reader can check that
$$D_{f^{-1}}^*(w_0) = D_f^*(z_0).$$

The following two basic theorems are important  for our research.

\begin{thm}[Kellogg, see for example \cite{G,gol}]\label{oneone} {\it If a domain
$D=\rm{Int}(\Gamma)$ is $C^{1,\alpha}$, $0 < \alpha <1$,  and
$\omega$ is a conformal mapping of $\mathbb{D}$ onto $D$, then
$\omega'$ and $\ln \omega'$ are in $\rm{Lip}_\alpha.$ In
particular, $|\omega'|$ is bounded from above and below on
$\mathbb D$}.
\end{thm}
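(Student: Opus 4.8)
The plan is to reduce the Hölder regularity of $\omega'$ to the classical conjugate-function theorem, after first securing the weaker fact that $\omega'$ extends continuously, without zeros, to $\overline{\mathbb D}$. Parametrize $\Gamma$ by arc length, $s\mapsto w(s)$, $s\in[0,L]$; since $\Gamma$ is $C^{1,\alpha}$ the tangent angle $\tau(s)=\arg w'(s)$ lies in $\mathrm{Lip}_\alpha$ (with $\tau(s+L)=\tau(s)+2\pi$). By the Carathéodory theorem $\omega$ extends to a homeomorphism of $\overline{\mathbb D}$ onto $\overline D$. The first step — and the technically heaviest one — is to invoke the boundary regularity theory for conformal maps onto $C^1$ (Dini-smooth) domains of Lindelöf and Warschawski (see \cite{gol}): it yields that $\omega'$ has a continuous, non-vanishing extension to $\overline{\mathbb D}$. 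In particular $\log|\omega'|$ is continuous on the compact set $\overline{\mathbb D}$, so $|\omega'|$ is bounded above and below there, which is already the last assertion of the theorem.

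Next I would use this to identify the boundary values of $\arg\omega'$. Since $\omega'\in C(\overline{\mathbb D})$ and $\omega'\ne 0$, the curve $\theta\mapsto\omega(e^{i\theta})$ is $C^1$ with non-vanishing derivative, and its arc-length function $s(\theta)=\int_0^\theta|\omega'(e^{it})|\,dt$ satisfies $0<m\le s'(\theta)=|\omega'(e^{i\theta})|\le M<\infty$, hence is bi-Lipschitz. Comparing $\frac{d}{d\theta}\,\omega(e^{i\theta})=ie^{i\theta}\omega'(e^{i\theta})$ with $w'(s(\theta))\,s'(\theta)$ and taking arguments gives the identity
\[
\arg\omega'(e^{i\theta})=\tau\big(s(\theta)\big)-\theta-\pi/2
\]
for the natural continuous lifts (the $2\pi$-winding cancels on both sides, so the left-hand side is a genuine continuous periodic function). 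Thus $\theta\mapsto\arg\omega'(e^{i\theta})$ is a smooth function plus $\tau\circ s$, the composition of a $\mathrm{Lip}_\alpha$ map with a Lipschitz one, and therefore $\arg\omega'|_{\mathbb T}\in\mathrm{Lip}_\alpha$.

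Finally I would pass from the boundary to the interior. As $\omega$ is univalent on the simply connected domain $\mathbb D$, $\omega'$ is zero-free and $\log\omega'=\log|\omega'|+i\arg\omega'$ admits a single-valued analytic branch on $\mathbb D$; hence $\arg\omega'$ and $\log|\omega'|$ are harmonic conjugates. By the previous step the boundary function of $\arg\omega'$ lies in $\mathrm{Lip}_\alpha$ with $0<\alpha<1$, so Privalov's conjugate-function theorem gives that the boundary function of $\log|\omega'|$ lies in $\mathrm{Lip}_\alpha$ as well. A harmonic function on $\mathbb D$ whose boundary values are in $\mathrm{Lip}_\alpha$ ($0<\alpha<1$) is itself in $\mathrm{Lip}_\alpha(\overline{\mathbb D})$ by the standard Poisson-integral estimate, so $\log\omega'\in\mathrm{Lip}_\alpha(\overline{\mathbb D})$; composing with the exponential, which is Lipschitz on the bounded set $\log\omega'(\overline{\mathbb D})$, yields $\omega'\in\mathrm{Lip}_\alpha(\overline{\mathbb D})$, completing the proof.

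The main obstacle is the very first step: deducing from the mere $C^1$-smoothness of $\Gamma$ that $\omega'$ has a continuous, non-vanishing extension to $\overline{\mathbb D}$ (an input of essentially the same order of difficulty as the theorem). If one prefers not to quote it, it can in principle be replaced by a bootstrap — first extracting some a priori modulus-of-continuity control on the boundary parametrization from the $C^{1,\alpha}$ hypothesis, then running the conjugate-function argument above to upgrade it — but that route is markedly longer, and the apparent circularity between ``$\omega'$ is bounded above and below on $\mathbb T$'' and ``the boundary parametrization is bi-Lipschitz'' is precisely the point that needs care.
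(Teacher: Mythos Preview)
The paper does not give its own proof of this statement: Kellogg's theorem is merely quoted with a reference to Goluzin \cite{G,gol}, and is used as a black box (e.g., in the proof of Theorem~\ref{1clip}). So there is no ``paper's proof'' to compare against. Your outline is essentially the classical argument one finds in those references --- Warschawski/Lindel\"of continuity of $\omega'$ up to the boundary, the tangent-angle identity $\arg\omega'(e^{i\theta})=\tau(s(\theta))-\theta-\pi/2$, Privalov's theorem for the conjugate, and the Poisson--Hardy--Littlewood transfer of $\mathrm{Lip}_\alpha$ from $\mathbb T$ to $\overline{\mathbb D}$ --- and it is correct as a sketch. Your own caveat is the right one: invoking the $C^1$/Dini boundary-continuity of $\omega'$ as a prerequisite is already a result of comparable depth, so if the goal were a self-contained proof one would need to supply that step (or run the Smirnov/Warschawski bootstrap you allude to).
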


\begin{thm}[Kellogg and Warschawski, see \cite{w2}, Theorem 3.6] \label{onetwo} {\it If a domain
$D=\rm{Int}(\Gamma)$ is $ C^{2,\alpha}$ and $\omega$ is a
conformal mapping of $\mathbb{D}$ onto $D$, then $|\omega''|$ has
a continuous extension to the boundary. In particular it is
bounded from above on $\mathbb D$}.
\end{thm}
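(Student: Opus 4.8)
The plan is to bootstrap from the Kellogg theorem (Theorem~\ref{oneone}), which already applies since a $C^{2,\alpha}$ curve is in particular $C^{1,\alpha}$: thus $\omega'$ and $\log\omega'$ lie in $\mathrm{Lip}_\alpha(\overline{\mathbb{D}})=C^{0,\alpha}(\overline{\mathbb{D}})$, and $|\omega'|$ is bounded above and below on $\mathbb{D}$. Here $0<\alpha<1$ throughout, as in Theorem~\ref{oneone}. Writing $\omega''=\omega'\cdot(\log\omega')'$, it suffices to prove the sharper statement $\log\omega'\in C^{1,\alpha}(\overline{\mathbb{D}})$; for then $(\log\omega')'=\omega''/\omega'\in C^{0,\alpha}(\overline{\mathbb{D}})$, its product with $\omega'\in C^{0,\alpha}(\overline{\mathbb{D}})$ gives $\omega''\in C^{0,\alpha}(\overline{\mathbb{D}})$, and in particular $|\omega''|$ has a continuous extension to $\overline{\mathbb{D}}$ and is bounded on $\mathbb{D}$ --- somewhat more than claimed.

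Since $\omega'$ is zero-free on the simply connected domain $\mathbb{D}$, $\log\omega'$ is a well-defined analytic function there, and for an analytic function on $\mathbb{D}$ the membership $\log\omega'\in C^{1,\alpha}(\overline{\mathbb{D}})$ is equivalent to the boundary trace $\theta\mapsto\log\omega'(e^{i\theta})$ lying in $C^{1,\alpha}(\mathbb{T})$ (Privalov's theorem on H\"older continuity of Schwarz/Cauchy integrals). So the first substantive step is to show $\log\omega'(e^{i\theta})\in C^{1,\alpha}(\mathbb{T})$, which I would handle separately for the argument and for the modulus. For the argument I would use its geometric meaning: the boundary curve $\theta\mapsto\omega(e^{i\theta})$ has velocity $ie^{i\theta}\omega'(e^{i\theta})$, so if $s=s(\theta)$ denotes the arclength parameter along $\Gamma$ and $\tau(s)$ its tangent angle, then $\arg\omega'(e^{i\theta})=\tau(s(\theta))-\theta-\tfrac{\pi}{2}$ while $s'(\theta)=|\omega'(e^{i\theta})|$. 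Because $\Gamma$ is $C^{2,\alpha}$, $\tau$ is $C^{1,\alpha}$ in $s$ (its derivative is the curvature, which is $C^{0,\alpha}$); because $|\omega'|\in C^{0,\alpha}(\mathbb{T})$ and is bounded away from $0$ and $\infty$ by Kellogg, $s$ is a $C^{1,\alpha}$ diffeomorphism of $\mathbb{T}$. Since a $C^{0,\alpha}$ function composed with a Lipschitz one is $C^{0,\alpha}$ and products of $C^{0,\alpha}$ functions are $C^{0,\alpha}$, the chain and product rules give $\arg\omega'(e^{i\theta})\in C^{1,\alpha}(\mathbb{T})$.

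For the modulus, $\log|\omega'(e^{i\theta})|$ is, up to an additive constant, the boundary value of the harmonic conjugate of $\arg\omega'(e^{i\theta})$, since $\log\omega'$ is analytic; the conjugation operator on $\mathbb{T}$ maps $C^{1,\alpha}(\mathbb{T})$ into itself for $0<\alpha<1$, so $\log|\omega'(e^{i\theta})|\in C^{1,\alpha}(\mathbb{T})$ as well. Hence $\log\omega'(e^{i\theta})\in C^{1,\alpha}(\mathbb{T})$, and the reduction of the first paragraph finishes the proof.

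I expect the main obstacle to be the regularity transfer by the conjugate function --- getting $\log|\omega'|\in C^{1,\alpha}$ out of $\arg\omega'\in C^{1,\alpha}$, and then passing from the $C^{1,\alpha}$ boundary trace of $\log\omega'$ to $\log\omega'\in C^{1,\alpha}(\overline{\mathbb{D}})$. This is precisely where $\alpha<1$ is essential: the endpoint $\alpha=1$ genuinely fails for the Hilbert transform. The remaining work --- checking that composition and products preserve the relevant H\"older classes, and that Kellogg's theorem supplies exactly the two-sided bound on $|\omega'|$ needed to control $s(\theta)$ and its inverse --- is routine but should be written out.
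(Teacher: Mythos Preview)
The paper does not prove this theorem at all: Theorem~\ref{onetwo} is stated as a classical result of Kellogg and Warschawski with a citation to \cite{w2}, Theorem~3.6, and no argument is given in the paper. So there is no proof to compare your proposal against.

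That said, your outline is a correct sketch of the standard argument and is essentially the classical route. The bootstrap from Theorem~\ref{oneone}, the identification $\arg\omega'(e^{i\theta})=\tau(s(\theta))-\theta-\tfrac{\pi}{2}$ via the tangent angle, the composition/product estimates showing $\arg\omega'\in C^{1,\alpha}(\mathbb{T})$, and the passage to $\log|\omega'|$ by the boundedness of the conjugation operator on $C^{1,\alpha}(\mathbb{T})$ for $0<\alpha<1$ are all sound. The only places that need care when written out are exactly the two you flagged: the Privalov-type transfer from $C^{1,\alpha}(\mathbb{T})$ boundary values of an analytic function to $C^{1,\alpha}(\overline{\mathbb{D}})$, and the fact that the Hilbert transform commutes with tangential differentiation so that $C^{1,\alpha}$ is preserved. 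Both are standard.
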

\subsection{The uniformization theorem}
The uniformization theorem says that every simply connected Riemann surface is conformally equivalent to one of the three domains: the open unit disk, the complex plane, or the Riemann sphere. In particular it admits a Riemannian metric of constant curvature. This classifies Riemannian surfaces as elliptic (positively curved – rather, admitting a constant positively curved metric), parabolic (flat), and hyperbolic (negatively curved) according to their universal cover.

The uniformization theorem is a generalization of the Riemann mapping theorem from proper simply connected open subsets of the plane to arbitrary simply connected Riemann surfaces.
The uniformization theorem implies a similar result for arbitrary connected second countable surfaces: they can be given Riemannian metrics of constant curvature.
Every Riemann surface is the quotient of a free, proper and holomorphic action of a discrete group on its universal covering and this universal covering is holomorphically isomorphic (one also says: "conformally equivalent") to one of the following:
the Riemann sphere,  the complex plane
or    the unit disk in the complex plane.  Koebe proved the general uniformization theorem that if a Riemann surface is homeomorphic to an open subset of the complex sphere (or equivalently if every Jordan curve separates it), then it is conformally equivalent to an open subset of the complex sphere.
In $3$  dimensions, there are $8$  geometries, called the eight Thurston geometries. Not every $3$-manifold admits a geometry, but Thurston's geometrization conjecture proved by Grigori Perelman states that every $3$-manifold can be cut into pieces that are geometrizable.
The simultaneous uniformization theorem of Lipman Bers shows that it is possible to simultaneously uniformize two compact Riemann surfaces of the same genus $>1$  with the same quasi-Fuchsian group.  The measurable Riemann mapping theorem shows more generally that the map to an open subset of the complex sphere in the uniformization theorem can be chosen to be a quasiconformal map with any given bounded measurable Beltrami coefficient.
\section {Appendix 2,  Harmonic Maps Between surfaces}\label{app2}

In \cite{jost2} it is given  self -contained account of the
results on harmonic maps between surfaces.
This treatment contains several simplifications and unifications
compared to the presentations available in the existing
literature.
Upper and lower bounds for the sectional curvature $K$  of a
manifold are  often denoted by  $k^2$  and   $-\omega^2$,  i.e.
$-\omega^2 \leq K \leq k^2$.  This notation avoids square roots.
It differs, however, from the terminology in some of the papers
frequently referred to in the  book \cite{jost2}.

Here we give short review  of results related to our consideration
in  section \ref{sec.pl}.  First  the lower bound for Jacobinan
are considerd.
\begin{thm}\label{7.2}
Suppose  $u: \mathbb{D} \rightarrow \Sigma$ is harmonic, and $u(D)
\subset B(p,M)$, where  $B(p,M)$  again is a disc with radius $M <
\pi /2k$. Suppose that $\partial u(\mathbb{D}) = u(\mathbb{T})$
and that $g: = u|T : T
\rightarrow  \partial u(D)$  is a   $C^2$- diffeomorphism with\\
${\rm (d1)}$  $0 < b \leq |g'(t)|$ for all $t \in [0,2\pi]$.\\
Assume furthermore that $g(\mathbb{T})$ is strictly convex w.r.t,
$u(\mathbb{D})$, and that we have the following estimates for the
geodesic curvature of
$g(\mathbb{T})$\\
${\rm (d2)}$   $0 < a_1  \leq \kappa_g (\underline{g}(t)) \leq
a_2$   for all $t \in [0,2\pi]$.

Then  ${\rm (iv.1)}$:

$J \geq \delta_1^{-1}$, where    $\delta_1 = \delta_1(\omega,k, M,
\tau,a_1,a_2,b,|g|_{1,\alpha})$  ($\tau$   is given in Thm. 6.2
\cite{jost2}).
\end{thm}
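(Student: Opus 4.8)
The plan is to imitate, in the curved target, the two-stage argument by which Theorem \ref{kal.publ2.8} is obtained from Theorem \ref{t1}$(i.3)$: first produce a lower bound for the Jacobian on $\mathbb{T}$, then propagate it into $\mathbb{D}$ by a minimum principle. The role played in the planar case by the decomposition $h=f+\overline{g}$ and the subharmonicity of $|f'|$ is taken over here by the Bochner identity $\Delta\log|\partial u|=-K_\rho J_u$ (cf. (\ref{bochn.1})) together with the curvature bounds $-\omega^2\le K_\rho\le k^2$ and the dilatation bound $\tau$ from Thm.~6.2 of \cite{jost2}, which in particular guarantees $|\overline{\partial} u|\le\tau|\partial u|<|\partial u|$, so that $u$ is a local diffeomorphism and $J>0$ in $\mathbb{D}$.

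First I would establish the interior distance estimate $\mathrm{dist}_\Sigma(u(z),\partial u(\mathbb{D}))\succeq(1-|z|)$, the analogue of Theorem \ref{t1}$(i.1)$. Since $M<\pi/(2k)$, the geodesic ball $B(p,M)$ is strictly geodesically convex and, by Rauch comparison, the distance function to any geodesic segment inside $B(p,M)$ is convex there. Given $a\in\partial u(\mathbb{D})$, let $\gamma_a$ be a supporting geodesic to the strictly convex curve $u(\mathbb{T})$ at $a$ and set $\varrho_a(z)=\mathrm{dist}_\Sigma(u(z),\gamma_a)$; composing the convex function $\mathrm{dist}_\Sigma(\cdot,\gamma_a)$ with the harmonic map $u$ yields a nonnegative subharmonic function on $\mathbb{D}$, and the Poisson-kernel (Harnack) estimate used in the proof of Theorem \ref{t1}$(i.1)$ gives $\varrho_a(z)\succeq(1-|z|)\,\varrho_a(0)$ with a constant depending only on $\omega,k,M$. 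Taking the infimum over $a\in\partial u(\mathbb D)$ then gives the claimed bound, the implied constant also depending on $\mathrm{dist}_\Sigma(u(0),\partial u(\mathbb{D}))$.

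Next comes the boundary Jacobian estimate. Schauder theory, using the $C^2$-regularity of $\partial u(\mathbb{D})$ and the norm $|g|_{1,\alpha}$, gives $u\in C^{1,\alpha}(\overline{\mathbb{D}})$, so $u_r^*$ and $u_\theta^*$ exist on $\mathbb{T}$. A Hopf-type boundary-point argument in the metric of $\Sigma$ (again controlled by the curvature bounds), applied to the barriers $\varrho_a$, upgrades the linear-in-$(1-|z|)$ lower bound of the previous step to a genuine lower bound for the component of $u_r^*$ along the inner normal $n=n_{u(e^{i\theta})}$, i.e. $(u_r^*,n)\ge c_0>0$ --- the analogue of Theorem \ref{t1}$(i.2)$. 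Combined with $|u_\theta^*|=|\underline{g}'(t)|\ge b$ from ${\rm (d1)}$ and with the strict convexity ${\rm (d2)}$ of $g(\mathbb{T})$ (which forces the radial direction to be uniformly transverse to $u(\mathbb{T})$ at the boundary), this gives, exactly as in Theorem \ref{t1}$(i.3)$, $J^*(e^{i\theta})=|(u_r^*,n)|\,|u_\theta^*|\ge c_0\, b$ on $\mathbb{T}$, with $c_0$ and the transversality constant depending on $\omega,k,M,\tau,a_1,a_2,b,|g|_{1,\alpha}$.

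Finally, the interior bound follows from a minimum principle for $J$. Since $g$ is a $C^2$-diffeomorphism onto a strictly convex curve, $u:\mathbb{D}\to u(\mathbb{D})$ is a diffeomorphism (this is where the a priori information encoded in $\tau$ is used), so $J>0$; combining the Bochner identity (\ref{bochn.1}) for $\log|\partial u|$ with the companion identity governing the dilatation $\mu=\overline{\partial u}/\partial u$, $|\mu|\le\tau$, one concludes that $\log(1/J)$ is a subsolution of an elliptic inequality on $\mathbb{D}$, whence $\inf_{\mathbb{D}}J=\inf_{\mathbb{T}}J^*$; together with the previous step this gives $J\ge c_0\, b=:\delta_1^{-1}$ everywhere in $\mathbb{D}$. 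The main obstacle is the boundary-point (Hopf) step in the curved target: upgrading ``distance to $\partial u(\mathbb D)$ grows at least linearly'' to a quantitative lower bound on the inward normal derivative --- uniformly, with constants explicitly in terms of the geometric data --- and then checking that this bound together with ${\rm (d2)}$ and $b$ controls the full Jacobian rather than merely one of its two factors. The subharmonicity input of the last step is also more delicate than in the Euclidean case, where $-\tfrac14\Delta\log J=|h_{zz}\overline{h_{\overline z}}-\overline{h_{\overline z\,\overline z}}\,h_z|^2/J^2$, because in the curved target this identity acquires a curvature correction that must be absorbed using $k^2$ and $\tau$.
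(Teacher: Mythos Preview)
Your Step 1 contains a genuine error. In the planar situation of Theorem \ref{t1}$(i.1)$, the support function $u_a(z)=(h(z)-a,n_a)$ is \emph{harmonic} because it is a linear functional of the harmonic map $h$; that is why the Poisson/Harnack lower bound $u_a(z)\ge\frac{1-r}{1+r}\,u_a(0)$ is available. In a curved target the distance $\mathrm{dist}_\Sigma(\cdot,\gamma_a)$ to a supporting geodesic is convex but not affine, so $\varrho_a=\mathrm{dist}_\Sigma(u(\cdot),\gamma_a)$ is only \emph{subharmonic}. Nonnegative subharmonic functions admit no Harnack-type pointwise lower bound (consider $(\mathrm{Re}\,z)^2$ on $\mathbb D$), so the claimed inequality $\varrho_a(z)\succeq(1-|z|)\,\varrho_a(0)$ does not follow, and with it your interior distance estimate collapses.

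The paper (following Jost) sidesteps this by working not with supporting geodesics but with the signed distance $d_*(q)=-\mathrm{dist}_\Sigma(q,\partial u(\mathbb D))$ to the full image boundary. The key point is not mere subharmonicity but \emph{strict} subharmonicity with an explicit constant: the chain rule $\Delta(\phi\circ u)=\mathrm{Hess}_\Sigma\phi(du,du)$ for harmonic $u$, together with the geodesic-curvature bound $\kappa_g\ge a_1$ (which controls $\mathrm{Hess}\,d_*$ near $\partial u(\mathbb D)$) and $|g'|\ge b$, yields $\Delta(d_*\!\circ u)\ge a_1 b^2$ in a collar of $\mathbb T$. This quantitative Laplacian lower bound is what makes the Hopf boundary-point lemma effective: with an interior tangent disc $B(z_1,r_1)$ at $z_0\in\mathbb T$ and the explicit barrier $v(z)=\tfrac{r_1^2}{8}a_1 b^2\bigl(1-|z-z_1|^2/r_1^2\bigr)$, one has $\Delta(d_*\!\circ u+v)\ge 0$, and the maximum principle delivers the lower bound on $\partial_r(d_*\!\circ u)(z_0)$ directly, with no preliminary interior distance estimate needed. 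Note finally that the paper explicitly remarks that univalence of $u$ is \emph{not} assumed in Theorem \ref{7.2}; the passage from the boundary bound to all of $\mathbb D$ via a Jacobian minimum principle (your Step 3) is the content of the subsequent Proposition \ref{p7.2}, under the additional hypothesis that $u$ is injective.
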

Note that  it is not assumed in the theorem  that $u$  is
univalent; we needed only that $u$ maps $\mathbb{D}$  onto the
convex side of $u(\mathbb{T})$. If $u$ is an injective harmonic
map, then $J \geq \delta_2^{-1}$ on $\mathbb{D}$.
\begin{prop}\label{p7.2}
Assume $u: \mathbb{D} \rightarrow \Sigma$  is an injective
harmonic map, where    $u(D) \subset B(p,M)$, and  $B(p,M)$ is a
disc with radius $M < \pi /2k$. Suppose that   $g: =
u|\mathbb{T}\in C^{1,\alpha}$, and that  ${\rm (d1)}$  and ${\rm
(d2)}$  hold. Then   ${\rm (iii.2)}$:  for all $z\in\mathbb{D}$,
$J \geq \delta_2^{-1}$, where $\delta_2 = \delta_2(\omega,k, M,
\tau,a_1,a_2,b,|g|_{1,\alpha})$.
\end{prop}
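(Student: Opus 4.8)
The plan is to deduce Proposition~\ref{p7.2} from Theorem~\ref{7.2} by approximation in $C^{1,\alpha}$. The crucial feature of Theorem~\ref{7.2} is that, although it asks the boundary diffeomorphism to be of class $C^2$, its conclusion $J\geq\delta_1^{-1}$ is quantitative with $\delta_1$ depending only on $\omega,k,M,\tau$, the curvature bounds $a_1,a_2$, the speed bound $b$, and the $C^{1,\alpha}$ norm $|g|_{1,\alpha}$ --- \emph{not} on any $C^2$ norm. This is exactly what lets an approximation in $C^{1,\alpha}$ go through, at the cost of assuming, as we do here, that $u$ is injective.

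First I would smooth the boundary data. Write $\Gamma=u(\mathbb{T})$, a $C^{1,\alpha}$ curve strictly convex with respect to $u(\mathbb{D})$, and $g=u|_{\mathbb{T}}\colon\mathbb{T}\to\Gamma$. By mollification on the circle one produces $C^{\infty}$ diffeomorphisms $g_j\colon\mathbb{T}\to\Gamma_j$ onto smooth strictly convex curves $\Gamma_j$ with $\Gamma_j\to\Gamma$, such that $g_j\to g$ uniformly and the bounds $|g_j'|\geq b/2$, $a_1/2\leq\kappa_{g_j}\leq 2a_2$ and $|g_j|_{1,\alpha}\leq 2|g|_{1,\alpha}$ hold for all $j$. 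Here one uses that a lower speed bound, two-sided geodesic-curvature bounds and strict convexity are open conditions stable under convolution of a $C^{1,\alpha}$ map, and that the resulting curvature and speed bounds are controlled by $|g|_{1,\alpha}$.

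Next I would solve the Dirichlet problem and apply Theorem~\ref{7.2}. Because the admissible image lies in the small geodesic ball $B(p,M)$ with $M<\pi/2k$, the harmonic map $u_j\colon\mathbb{D}\to\Sigma$ with $u_j|_{\mathbb{T}}=g_j$ exists, is unique, depends continuously on the boundary data, and --- by the existence theorem for harmonic diffeomorphisms solving a Dirichlet problem in this range (the surface analogue of Rad\'o--Kneser--Choquet quoted from \cite{jost2}) --- maps $\mathbb{D}$ homeomorphically onto the convex side of $\Gamma_j$. Thus Theorem~\ref{7.2} applies to each $u_j$ and gives $J_{u_j}\geq\delta_1^{-1}$; since $\omega,k,M,\tau,a_1,a_2,b$ are fixed and $|g_j|_{1,\alpha}$ is bounded uniformly in $j$, the constant is uniform, i.e. $J_{u_j}\geq\delta_2^{-1}$ with $\delta_2=\delta_2(\omega,k,M,\tau,a_1,a_2,b,|g|_{1,\alpha})$. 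Finally, uniqueness and continuous dependence for the Dirichlet problem (again using $M<\pi/2k$) together with interior elliptic estimates give $u_j\to u$ uniformly on $\overline{\mathbb{D}}$ and in $C^1_{\mathrm{loc}}(\mathbb{D})$, so $J_{u_j}(z)\to J_u(z)$ pointwise on $\mathbb{D}$, and ${\rm (iii.2)}$ follows: $J_u(z)\geq\delta_2^{-1}$ for all $z\in\mathbb{D}$.

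The main obstacle is making this approximation legitimate: one must check that every hypothesis of Theorem~\ref{7.2} genuinely holds for the $u_j$ with constants independent of $j$ --- in particular that the harmonic extension of a smoothed boundary diffeomorphism onto a convex curve does map $\mathbb{D}$ onto the convex side (this is where $M<\pi/2k$ and the surface RKC theorem enter), and that the geometric quantities feeding $\delta_1$ really are bounded purely in terms of $C^{1,\alpha}$ data. Alternatively one could bypass the surface RKC step by a degree argument: since $u_j\to u$ uniformly and $u$ is injective onto the convex side, for large $j$ the mapping degree of $u_j$ about an interior point equals $1$, which forces $u_j(\mathbb{D})$ to be the convex side of $\Gamma_j$. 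Once these points are settled, the limiting argument is routine.
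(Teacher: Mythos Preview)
Your approximation strategy is viable, but it differs from the route the paper (following Jost) takes. There the argument is direct: one introduces the signed distance $d_*(q)=-\mathrm{dist}(q,\partial u(\mathbb{D}))$ on the image (well-defined because $u$ is injective, so $\partial u(\mathbb{D})=u(\mathbb{T})=\Gamma$), computes from the curvature and speed bounds that $\Delta(d_*\circ u)\geq a_1 b^2$, and then runs the Hopf boundary-point lemma on the subharmonic function $d_*\circ u$ using an explicit quadratic comparison function $v$ on an interior disc tangent to $\mathbb{T}$ at $z_0$. This controls the outer-normal derivative of $d_*\circ u$ at each boundary point, which yields the Jacobian bound. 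No smoothing, no auxiliary Dirichlet problems.

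One caution about your argument: invoking the surface Rad\'o--Kneser--Choquet theorem to guarantee that each $u_j$ is a diffeomorphism onto the convex side is circular here --- that existence result (Theorem~\ref{8.1} in the paper's numbering) is proved \emph{using} Proposition~\ref{p7.2}. So the surface RKC step is not actually available to you; your degree alternative is not optional but mandatory. Fortunately it works: the convex-hull property of harmonic maps into $B(p,M)$ forces $u_j(\mathbb{D})$ into the closed convex side of $\Gamma_j$, and uniform convergence $u_j\to u$ with $u$ injective onto the convex side gives degree $1$ over interior points, hence $u_j(\mathbb{D})$ is exactly the open convex side and $\partial u_j(\mathbb{D})=\Gamma_j$, which is all Theorem~\ref{7.2} requires. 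With that repair your proof goes through; its merit is a clean reduction to the $C^2$ case already in hand, while the paper's Hopf-lemma route is self-contained and makes the geometric mechanism (strict convexity producing strict subharmonicity of $d_*\circ u$) explicit.
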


We define   $d_*(q): = -dist(q,\partial u(D))$ for   $q \in u(D)$.
Since $\triangle (d_*\circ u) \geq a_1 b^2$,  \,  $d_*\circ u$ is
subharmonic function. This will enable us to get a lower bound for
the radial derivative of \,$d_*\circ u$ \,  at boundary points
with the argument of the boundary lemma of E. Hopf.

Taking Cor. 6.2\cite{jost2} into account, we can therefore find a
neighborhood $V_0$  of $\mathbb{T}$  in  $\mathbb{D}$  with the
property that $d_*$  is a  $C^2$ function with strictly convex
level curves on $u(V_0)$. Suppose $z_0\in \mathbb{T}$; we can
choose some disc $B_1= B(z_1,r_1) \subset D$,   $z_0\in
S(z_1,r_1)$, in such a way that  $\triangle (d_*\circ u) \geq a_1
b^2/2$  for  $z\in B_1$.

Defining the auxiliary function $v$  via    $v(z)=
\frac{r_1^2}{8}a_1 b^2 (1- \frac{|z-z_1|^2}{r_1^2})$, we find
$\triangle v= - a_1 b^2/2$ and therefore  $\triangle (d_*\circ u +
v)\geq 0$. The maximum principle now controls the derivative of \,
$d_*\circ u + v$\,  at  $z_0$ in  the direction of the outer
normal.

Now,using the above estimates,  the existence of harmonic
diffeomorphisms which solve a Dirichlet problem is considered.
\begin{thm}\label{8.1}
Suppose ${\rm (e1)}$: $\Omega$ is a compact domain with Lipschitz
boundary $\partial \Omega$   on some surface, and that $\Sigma$ is
another surface. We assume ${\rm (e2)}$: that $f:
\overline{\Omega} \rightarrow \Sigma$ maps $\overline{\Omega}$
homeomorphically onto its image, that $f(\partial \Omega)$  is
contained in some disc $B(p,M)$ with radius $M < \pi /2k$  (where
$k^2 \geq 0$ is an upper curvature bound on $B(p,M)$) and that the
curves $f(\partial \Omega)$  are of Lipschitz class and convex
w.r.t. $f(\Omega)$.
Then ${\rm (v.1)}$: there exists a harmonic mapping $u: \Omega
\rightarrow B(p,M)$  with the boundary values prescribed by $f$
which is a homeomorphism between $\overline{\Omega}$  and its
image, and a diffeomorphism in the interior.\\
${\rm (v.2)}$   Moreover, if $f|\partial \Omega$ is even a $C^2$
-diffeomorphism between $C^2$ -curves, then $u$ is a
diffeomorphism up to the boundary.
\end{thm}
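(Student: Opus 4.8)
The plan is to follow the strategy of \cite{jost2}: solve the Dirichlet problem, confine the image by convexity, establish injectivity and interior regularity in the smooth case by combining the interior Jacobian estimate of Theorem \ref{7.2} with a degree/covering argument, and then remove the smoothness hypotheses by approximation.

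\emph{Step 1: existence, uniqueness and confinement.} Since $M<\pi/(2k)$, the geodesic ball $B(p,M)$ is strictly convex and disjoint from the cut locus of $p$, so by the standard existence theory for the harmonic map Dirichlet problem into such a ball (see \cite{jost2}) there is a harmonic $u\colon\Omega\to\Sigma$ with $u|_{\partial\Omega}=f|_{\partial\Omega}$ and $u(\overline{\Omega})\subset B(p,M)$; the confinement is because $q\mapsto\operatorname{dist}^2(q,p)$ is convex on $B(p,M)$, hence $\operatorname{dist}^2(u(\cdot),p)$ is subharmonic and attains its maximum on $\partial\Omega$. The same convexity gives uniqueness, since for two such solutions a suitable convex function of $\operatorname{dist}(u,v)$ is subharmonic and vanishes on $\partial\Omega$. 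Finally, by \textrm{(e2)} each component of $f(\partial\Omega)$ is convex with respect to $f(\Omega)$, so for every supporting geodesic $\Lambda$ the signed distance to $\Lambda$ is convex on $B(p,M)$; composing with $u$ we get a subharmonic function which is $\le 0$ on $\partial\Omega$, whence $u(\overline{\Omega})$ lies in the closed region $\overline{D}$ cut out by $f(\partial\Omega)$ on the convex side. In particular $u$ maps $\Omega$ onto the convex side of $u(\partial\Omega)=f(\partial\Omega)$, which is precisely the hypothesis under which Theorem \ref{7.2} applies.

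\emph{Step 2: the smooth case.} Assume first $f|_{\partial\Omega}$ is a $C^2$-diffeomorphism onto $C^2$ strictly convex curves, so \textrm{(d1)} and \textrm{(d2)} hold. By Step 1 and Theorem \ref{7.2}, $J_u\ge\delta_1^{-1}>0$ on $\Omega$, so $u$ is an orientation-preserving local diffeomorphism in the interior; being continuous up to $\partial\Omega$ with boundary values the homeomorphism $f|_{\partial\Omega}$ onto $\partial D$, it is proper as a map $\Omega\to D$. A proper local diffeomorphism is a covering map, and since its restriction to each boundary curve is a homeomorphism it has a single sheet, so $u\colon\Omega\to D$ is a diffeomorphism and $\overline{u}\colon\overline{\Omega}\to\overline{D}$ a homeomorphism. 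When the boundary curves are $C^2$, the Schauder boundary estimates for harmonic maps upgrade this to a diffeomorphism up to $\partial\Omega$, which proves \textrm{(v.2)}.

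\emph{Step 3: approximation and limit.} For the general Lipschitz case, approximate $f|_{\partial\Omega}$ uniformly by $C^{\infty}$-diffeomorphisms $f_j$ onto smooth strictly convex curves $\gamma_j\to f(\partial\Omega)$, and let $u_j$ be the corresponding harmonic extensions from Step 1, which by Step 2 are diffeomorphisms $\Omega\to\operatorname{Int}\gamma_j$ and homeomorphisms up to the boundary. All $u_j$ map into $B(p,M)$, so interior gradient estimates for harmonic maps yield a subsequence converging in $C^{\infty}_{\mathrm{loc}}(\Omega)$; a barrier argument at $\partial\Omega$, using the Lipschitz boundary data and convexity of $\overline{D}$, gives equicontinuity up to $\partial\Omega$, so the limit $u$ is continuous on $\overline{\Omega}$ with $u|_{\partial\Omega}=f|_{\partial\Omega}$, and by uniqueness it is the map from Step 1. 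Since $\overline{\operatorname{Int}\gamma_j}\to\overline{D}$ and each interior point of $D$ has exactly one $u_j$-preimage, a standard argument (uniform limits of homeomorphisms with injective limit have convergent inverses on compacta) shows $u$ is a homeomorphism of $\overline{\Omega}$ onto $\overline{D}$. Being an injective harmonic map between surfaces, $u$ has no interior branch point — a critical point would destroy local injectivity, as in Lewy's planar theorem — hence $J_u>0$ in $\Omega$ and $u$ is a diffeomorphism in the interior, giving \textrm{(v.1)}.

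\emph{The main obstacle} is Step 3: the constant $\delta_1$ of Theorem \ref{7.2} depends on the speed and curvature bounds of the approximating curves and degenerates as $f_j\to f$ when $f$ is merely Lipschitz, so there is no uniform interior lower bound on $J_{u_j}$ and the estimate cannot simply be passed to the limit. One must instead show that the uniform limit of the diffeomorphisms $u_j$ remains globally injective, including on the boundary, purely from the convergence of the target regions, and then recover interior non-degeneracy of $J_u$ from injectivity alone. Establishing the boundary behaviour of the limit — equicontinuity up to $\partial\Omega$ and the matching of boundary values — under only a Lipschitz hypothesis on $\partial\Omega$ and on $f$ is the second delicate point.
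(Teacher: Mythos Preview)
Your overall architecture (confinement, smooth case, approximation) is reasonable, but your Step~2 departs from the paper's argument at exactly the point that carries the whole proof, and the shortcut you take is where a genuine gap hides.

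\textbf{What the paper actually does.} After reducing to $\Omega=\mathbb{D}$ by a conformal map, the paper (following Jost) does \emph{not} apply Theorem~\ref{7.2} directly to the harmonic extension $u$ of $f$. Instead it runs a \emph{continuity method}: fix the target curve, let $\phi$ be a conformal map onto the image, and interpolate the boundary values along the curve,
\[
\omega(z,\lambda)=\Gamma\bigl(\lambda\,\Gamma^{-1}(\phi(z))+(1-\lambda)\,\Gamma^{-1}(g(z))\bigr),\qquad z\in\mathbb{T},\ \lambda\in[0,1],
\]
where $\Gamma$ is the arclength parametrization of $\partial(g(\mathbb D))$. Let $u_\lambda$ be the harmonic extension of $\omega(\cdot,\lambda)$ and $m(\lambda)=\inf_{\mathbb D}|J(u_\lambda)|$. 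Then $m$ is continuous in $\lambda$, $m(0)>0$ since $u_0=\phi$ is conformal, and for every $\lambda$ with $m(\lambda)>0$ the map $u_\lambda$ is a local diffeomorphism with homeomorphic boundary values, hence \emph{injective}; now Proposition~\ref{p7.2} (which \emph{does} assume injectivity) gives a uniform lower bound $m(\lambda)\ge m_0>0$ depending only on the data. Therefore $L:=\{\lambda: m(\lambda)>0\}$ is nonempty, open, and closed, so $L=[0,1]$, and $u_1$ is the desired diffeomorphism.

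\textbf{Where your Step 2 is fragile.} You invoke Theorem~\ref{7.2} to get $J_u\ge\delta_1^{-1}>0$ directly, without knowing beforehand that $J_u>0$ or that $u$ is injective. But as the paper's own discussion after Proposition~\ref{p7.2} makes clear, the mechanism behind these estimates is a Hopf boundary lemma for the subharmonic function $d_*\circ u$, which yields the lower bound \emph{at boundary points}. Pushing this to the interior requires either a minimum principle for $J$ (which presupposes $J>0$) or injectivity (Proposition~\ref{p7.2}). That is exactly the circularity the continuity method is designed to break: one starts from a map ($\phi$) where $J>0$ is known, and propagates it. Appealing to Theorem~\ref{7.2} as a black box that already delivers $J>0$ everywhere amounts to assuming the conclusion; at the level of this paper's exposition the statement may look permissive, but the underlying proof in \cite{jost2} goes through the continuity argument you have omitted.

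\textbf{A second issue in Step 3.} Your final sentence claims that an injective harmonic map between surfaces has no interior critical point ``as in Lewy's planar theorem''. Lewy's theorem is for \emph{Euclidean} planar harmonic maps and does not transfer verbatim to harmonic maps between Riemannian surfaces; injectivity alone (via invariance of domain) gives a local homeomorphism, which for smooth maps does not preclude $J=0$. In Jost's framework the non-vanishing of $J$ for $u=u_1$ is obtained precisely from the continuity method above, not from a Lewy-type statement applied a posteriori to the limit. Your identification of the degeneration of $\delta_1$ under approximation as the main obstacle is correct, but the remedy in the paper is again different: the smooth strictly convex case is settled by the continuity method, and then two separate approximation arguments (first relaxing strict convexity, following a modification of Heinz, then relaxing the $C^2$ boundary regularity) reduce the general case to it.
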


First of all, $\partial \Omega$  is connected. Otherwise,
$f(\partial \Omega)$ would consist of at least two curves, both of
them convex w.r.t. $f(\Omega)$.
Since
$\Omega$  is homeomorphic to $f(\Omega)$,  we conclude that
$\Omega$    is a disc, topologically.  Since  there is a conformal map   $\phi : D\rightarrow g(\mathbb{D})$,
one have to
prove the theorem only for the case where $\Omega$   is the plane
unit disc $\mathbb{D}$.

We first assume that  $f:\mathbb{T}\rightarrow f(\mathbb{T})$  is  $C^2$
-diffeomorphism between curves of class   $C^{2,\alpha}$, that
$f(\mathbb{T})$  is not only convex, but strictly convex, and that
we have the following
quantitative bounds \\
${\rm (e2)}$  \,     $|\underline{f}'(t)|\geq b_2^{-1}$  and  $
|\underline{f}''(t)| \leq b_1$.

Now let  $\Gamma$   be the parametrization of the boundary curve
of $g(D)$   by arclength. If   $l$ is length of  $\partial g(D)$,
then  $\Gamma$  maps $[0,l]$   on  $\partial g(D)$. We set
$w(z,\lambda)=  \lambda \Gamma^{-1}(\phi(z) + (1- \lambda)
\Gamma^{-1}(g(z))$,   $\omega(z,\lambda)= \Gamma (w(t,\lambda)) $,
$z\in T$, $\lambda \in [0,1]$, and $\underline{\omega}(t,\lambda)
\omega(e^{it},\lambda)$. Using  ${\rm (e2)}$, one can check that

$\underline{\omega}(t,\lambda)$, $\frac{\partial
\underline{\omega}(t,\lambda)}{\partial t}$ and $\frac{\partial^2
\underline{\omega}(t,\lambda)}{\partial^2 t}$  are continuous
functions of $\lambda$.

Let now  $u_\lambda$ denote the harmonic map from $D$ to $B(p,M)$
with boundary values  $\omega(\cdot,\lambda)$. In particular,
$m(\lambda):= \inf_{z\in D} |J(u_\lambda)(z)|$ depends
continuously on $\lambda$. We define $L:= \{\lambda \in [0,1]:
m(\lambda) > 0 \}$.  $0 \in L$; ($u_0$ is the conformal map
$\phi$), and therefore $L$  is not empty.

By Proposition \ref{p7.2},  $m(\lambda)\geq m_0 > 0$  for $\lambda
\in L$. Since $m(\lambda)$   depends continuously on $\lambda$,
(8.1.7) implies  $L = [0,1]$. Thus,  $u_1$  is a local
diffeomorphism and a diffeomorphism between the boundaries of $D$
and $u_1(D)$, and consequently a global diffeomorphism by
topology.
Theorem \ref{8.1}  and the uniqueness theorem of Jager and Kaul
(cf. Theorem  5.1 \cite{jost2}) imply

\begin{cor}
Under the assumptions of Thm. \ref{8.1}, each harmonic map which
solves the Dirichlet problem defined by $g$   and which maps
$\Omega$  into a geodesic disc $B(p,M)$  with radius $M < \pi
/2k$, is a diffeomorphism  in  $\Omega$.
\end{cor}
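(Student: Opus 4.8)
The plan is to combine the existence result just established in Theorem~\ref{8.1} with a uniqueness statement for harmonic maps into small geodesic discs. First I would invoke Theorem~\ref{8.1}: under the hypotheses (e1)--(e2) there exists at least one harmonic map $u_0 : \Omega \to B(p,M)$ with the boundary values prescribed by $g$ which is a homeomorphism of $\overline{\Omega}$ onto its image and a diffeomorphism in the interior of $\Omega$ (and, if the stronger regularity in (v.2) holds, a $C^2$-diffeomorphism up to the boundary). This produces a distinguished solution of the Dirichlet problem that already carries the desired regularity.

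Next I would appeal to the uniqueness theorem of J\"ager and Kaul (Theorem~5.1 in \cite{jost2}): if a harmonic map takes its values in a geodesic disc $B(p,M)$ whose radius satisfies $M < \pi/2k$, where $k^2 \geq 0$ is an upper bound for the sectional curvature on $B(p,M)$, then the harmonic map with prescribed continuous boundary data is unique. The point is that the smallness condition $M < \pi/2k$ appearing in the corollary is precisely the one required for this uniqueness; under it the energy is strictly convex along geodesic homotopies between maps with values in $B(p,M)$, so two harmonic maps with the same boundary values and images in $B(p,M)$ must agree.

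Consequently, if $u$ is any harmonic map solving the Dirichlet problem defined by $g$ and mapping $\Omega$ into $B(p,M)$, then $u = u_0$ by J\"ager--Kaul uniqueness, and hence $u$ inherits from $u_0$ the property of being a diffeomorphism in $\Omega$. This is exactly the assertion of the corollary, so no further work is needed.

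The main obstacle I anticipate is bookkeeping rather than conceptual: one must check that the hypotheses of the J\"ager--Kaul theorem are genuinely met for the competing solution $u$ --- that the boundary map $g$ is admissible as Dirichlet data, that ``solves the Dirichlet problem'' is read in the same (energy/weak) sense in which Theorem~\ref{8.1} furnishes $u_0$, and that the curvature bound $k$ here is the same $k$ controlling $B(p,M)$. A secondary subtlety is that a priori $u$ need not be known to extend continuously to $\overline{\Omega}$; here one uses the standard fact that a harmonic map into a geodesic ball of radius $< \pi/2k$ with continuous boundary values extends continuously to $\overline{\Omega}$, so that the boundary condition --- and therefore the uniqueness theorem --- indeed applies. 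Once these points are aligned the argument is immediate.
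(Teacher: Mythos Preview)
Your proposal is correct and follows exactly the route the paper indicates: the corollary is stated as an immediate consequence of Theorem~\ref{8.1} together with the uniqueness theorem of J\"ager and Kaul (Theorem~5.1 in \cite{jost2}), which is precisely the combination you invoke. Your additional remarks on checking the hypotheses of the uniqueness theorem are reasonable but not elaborated in the paper itself.
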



The above, we have assumed that the boundary of the image is
strictly convex, and, in addition, that the boundary values are a
diffeomorphism of class $C^2$.
The theorem  also holds for the case that\\
(f1): the boundary is only supposed to be convex and that\\
(f2): the boundary values are only supposed to induce a
homeomorphism of the boundaries.\\
The procedure to handle  the case (f1)  is called the first
approximation argument. It is a modification of the corresponding
one given by E. Heinz.

The case (f1) of a general boundary is handled by an approximation
by smooth curves.
More precisely  it is supposed that the boundary of the image
$f(\mathbb{D})$ is only convex, while the boundary values $f$ are
still assumed to be a diffeomorphism of class $C^2$. For
approximation arguments  in planar case see also
\cite{kal.studia,Iw14,IwOn}.

{\bf Acknowledgement}.
\smallskip I would like to thank V. Bozin   and  D.  Kalaj  for  stimulating discussions related to this topic.
In particular,  D. Kalaj turned my attention on important
consideration in \cite{jost2} and informed  me  about  Iwaniec
-Onninen paper \cite{IwOn}.


{}


\begin{thebibliography}{}
\smallskip
\bibitem{aams} A. Abaob, M. Arsenovi\'{c}, M. Mateljevi\'{c} and A. Shkheam, {\it Moduli of continuity of harmonic quasiregular mappings on bounded domains}, Ann. Acad. Sci. Fenn. Math. \textbf{38}(2013),
839--847.

\bibitem{ales.nes2}  G. Alessandrini, V. Nesi: {\it Invertible harmonic mappings, beyond Kneser},
Ann. Scuola Norm. Sup. Pisa, Cl. Sci. (5) VIII (2009), 451-468.
\bibitem{akm} M. Arsenovi\'{c}, V. Koji\'{c} and M. Mateljev\'{c}, {\it  On
Lipschitz continuity of harmonic quasiregular maps on the unit
ball in $R^{n}$}, Ann. Acad. Sci. Fenn. vol. 33, 2008, 315-318.
\bibitem{ast.ge} K. Astala, F. W. Gehring,
{\it Quasiconformal analogues of theorems of Koebe and Hardy-Littlewood}, Mich.
Math. J. 32 (1985) 99-107.

\bibitem{ast.ma} K. Astala, V. Manojlovi\'c, {\it Pavlovi\'c's theorem in space},
arXiv:1410.7575 [math.CV] \smallskip


\bibitem{BoMa2}  V. Bo\v zin,  M. Mateljev\'c, {\it  Bounds for Jacobian of harmonic injective mappings in
n-dimensional space},     arXiv:1502.00457 [math.AP]   \smallskip

\bibitem{dur} P. Duren,    {\it Harmonic   mappings in the plane},
Cambridge Univ.   Press,  2004.
\bibitem{gol}
G. M. Goluzin: {\it Geometric function theory,} Nauka, Moskva,
1966. (in Russian).

\bibitem{G}
G. M. Goluzin: {\it Geometric theory of functions of a complex
variable.} Translations of Mathematical Monographs, Vol.
\textbf{26} American Mathematical Society, Providence, R.I. 1969
vi+676 pp.


\bibitem{gl} S. Gleason,  T. Wolff,   {\it  Lewy's harmonic gradient maps in
higher dimensions}, Communications in Partial Differential
Equations Volume 16, Issue 12, 1925-1968, 1991.
\bibitem{Iw14}  T.  Iwaniec,{\it A path from Riemann's
Conformal Mapping Theorem to Harmonic Homeomorphisms of
Rado-Kneser-Choquet}, (Helsinki \& Syracuse) Seminar Talk in AALTO,
January 28, 2014.
\bibitem{IwOn} T. Iwaniec and  J. Onninen,  {\it Rad\'{o}-Kneser-Choquet
theorem}, Bull. London Math. Soc. 46 (2014) 1283–1291,
http://blms.oxfordjournals.org/content/46/6/1283.full.pdf+html

\bibitem{H} E. Heinz, {\it On one-to-one
harmonic mappings}, Pac. J. Math. {\bf 9} (1959), 101--105.

\bibitem{jost2}
J. Jost, {\it Harmonic Maps Between surfaces},  Springer-Verlag
Berlin Heidel berg New York Tokyo,  1984.

\bibitem{jost}
J. Jost, {\it Two-dimensional Geometric Variational Problems},
John Wiley \& Sons, 1991.


\bibitem{k} D. Kalaj, {\it Harmonic functions and
quasiconformal mappings, in Serbian: Harmonijske funkcije i
kvazikonformna preslikavanja}, Master thesis 1998.
\bibitem{kalaj.thesis} D.  Kalaj, {\it Harmonic and
quasiconformal functions between convex domains}, Doctoral Thesis,
Beograd,  submited 2000,  (2002).
\smallskip
\bibitem{Dk} D. Kalaj, {\it On harmonic diffeomorphisms of
the unit disc onto a convex domain}, Complex Variables, Theory
Appl. 48, No. 2, 175-187 (2003).
\smallskip
\bibitem{kal.publ} D. Kalaj,  {\it Quasiconformal harmonic functions between convex
domains}, Publ. Inst. Math., Nouv. Ser. \textbf{76(90) (2004).
3--20.}

\bibitem{kal.studia} D.  Kalaj, {\it Invertible harmonic mappings beyond Kneser theorem and
quasiconformal harmonic mappings}, Studia mathematica
(ISSN:0039-3223), Volume 207, Issue 2, Dec. 2011, Page(s) 117-136.

\bibitem{k.jdam} D. Kalaj, {\it A priori estimate of gradient of a solution to certain differential inequality and quasiconformal mappings}, Journal d'Analyse Math. Volume 119, 2013, pp 63-88.
\bibitem{KaMpacific}
D. Kalaj, M. S. Mateljevi\' c,  {\it Harmonic quasiconformal
self-mappings and M\"obius transformations of the unit ball.}
Pacific journal of mathematics. ISSN 0030-8730. 247 : 2 (2010)
389-406.

\bibitem{kmm} D. Kalaj, M. Markovi\'c, M. Mateljevi\'c, {\it
Carath\'{e}dory and Smirnov type theorems for harmonic mappings of the
unit disk onto surfaces}, Annales Academiae Scientiarum Fennicae
Mathematica Volumen 38, 2013, 565-580.
\smallskip
\bibitem{OM1} O. Martio, {\it On harmonic quasiconformal
mappings}, Ann. Acad. Sci. Fenn., A 1, 425, 3-10 (1968)
\bibitem{revroum01}
M. Mateljevi\' c,  {\it Estimates for the modulus of the
derivatives of harmonic univalent mappings}, Proceedings of
International Conference on Complex Analysis and Related Topics
(IX$^{th}$ Romanian-Finnish Seminar, 2001),   Rev Roum  Math Pures
Appliq (Romanian Journal of Pure and Applied mathematics)  47
(2002) 5-6,   709 -711.

\bibitem{napoc1} M.  Mateljevi\'c:  {\it Distortion  of harmonic functions and
harmonic quasiconformal  quasi-isometry},   Revue Roum. Math.
Pures Appl.  Vol. {\bf 51} (2006) 5--6, 711--722.
\bibitem{mm.fil12a} M. Mateljevi\' c: {\it Quasiconformality of harmonic mappings between Jordan
domains 2},  Filomat  26:3  (2012) 479-509.
\bibitem{mm.unpub2} M. Mateljevi\'{c}, {\rm (a)}
Communications at Analysis Seminar, University of Belgrade, \newline
{\rm (b)} {\it Lipschitz-type spaces, Quasiconformal and Quasiregular harmonic
mappings and Applications}, unpublished manuscript, 2008.
\bibitem{mavu} M. Mateljevi\'{c} and M. ~Vuorinen, {\it
On harmonic quasiconformal quasi-isometries}, J. Inequal. Appl.,
Volume 2010, Article ID 178732, 19 pages doi:10. 1155/2010/1787.

\bibitem{topic} M. Mateljevi\'{c}, {\it Topics in Conformal,
Quasiconformal and Harmonic maps}, Zavod za ud\v{z}benike, Beograd
2012.
\bibitem{aaa} M. Mateljevi\' c:  {\it Distortion of quasiregular
mappings and equivalent norms on Lipschitz-type spaces}, Abstract
and Applied Analysis Volume 2014 (2014), Article ID 895074, 20
pages http://dx.doi.org/10.1155/2014/895074,
http://www.hindawi.com/journals/aaa/2014/895074/
\bibitem{rckm0} M. Mateljevi\' c,  {\it The
lower bound for the modulus of the derivatives and Jacobian of
harmonic univalent mappings}   http://arxiv.org/pdf/1501.03197v1.pdf, [math.CV] 13 Jan 2015
%
\bibitem{mart} G. Martin,   {\it  The Theory of Quasiconformal Mappings in Higher
Dimensions}, I,   arXiv:1311.0899v1 [math.CV] 4 Nov 2013
\smallskip
\bibitem{MP} M. Pavlovi\' c,
{\it Boundary correspondence under harmonic quasiconformal
homeomorfisms of the unit disc}, Ann. Acad. Sci. Fenn., Vol 27,
(2002) 365-372.
\bibitem{tw} L. Tam,   T. Wan,  {\it On  quasiconformal   harmonic    maps},
Pacific J  Math,   {\bf V}   182  (1998),   359-383.
\bibitem{vaisala} J. V\"{a}is\"{a}l\"{a}, {\it Lectures on
n-Dimensional Quasiconformal Mappings}, Springer-Verlag, 1971.
\bibitem{vu} M. Vuorinen, {\it Conformal geometry and
quasiregular mappings}, Lecture Notes in Math. \textbf{V 1319},
Springer-Verlag, 1988.
%
\bibitem{Sc} R. Schoen, {\it The Role of Harmonic Mappings in
Rigidity and Deformation Problems}  in Collection:Complex
Geometry (Osaka,1990),   Lectures Notes in Pure and Applied
Mathematics, Dekker,  New York 143 (1993), 179-200.

\bibitem{lang} H. Lang,  {\it A condition that a continuously deformed, simply
connected body does not penetrate itself}
    AGTM report, University of Kaiserslautern, Germany, No. 271, 2007;http://www.ltd.tf.uni-erlangen.de/Team/Lang/HLang$_-$Publications.htm.
\bibitem {rud2} W. Rudin,  {\it Function Theory in the  Unit Ball of $C^n$},
Springer-Verlag, Berlin Heidelberg New York, 1980.

\bibitem{w2} S. E. Warschawski: {\it On the higher derivatives at the boundary in conformal
mapping}, Trans. Amer. Math. Soc. \textbf{38} No. 2 (1935)
310--340.
\end{thebibliography}
\end{document}